\documentclass{ws-jta}

 \usepackage{xcolor}
 \usepackage{amsmath}
 \usepackage{tcolorbox}
 \usepackage{multicol}
 \usepackage{amscd}
  \usepackage{graphicx,subfigure,epsfig}
 \usepackage{mathtools}

 \newtheorem{question}{Question}[section]
 \usepackage{subcaption}
 \usepackage{epsfig}
 \usepackage{xcolor}
 \usepackage{soul}
 \usepackage{tikz}
 \usetikzlibrary{arrows.meta}
 \usetikzlibrary{shapes.geometric}
 \usepackage{tcolorbox,amsmath,parcolumns}
\begin{document}

\markboth{Kirandeep Kaur, Madeti Prabhakar, Vaibhav Keshari}{$\mathcal{B}$-Index polynomial invariants of twisted knots}

 \catchline{}{}{}{}{}

\title{$\mathcal{B}$-INDEX POLYNOMIAL FOR TWISTED KNOTS }
\author{Kirandeep Kaur\textsuperscript{1}, Madeti Prabhakar\textsuperscript{2} and Vaibhav Keshari\textsuperscript{3} }

\address{\textsuperscript{1}Department of Computational, Statistics and Data Analytics, Guru Nanak Dev University, Punjab 1430005, India,
\email{\textsuperscript{1} kirandeepoffical@gmail.com}
\textsuperscript{2,3} Department of Mathematics, Indian Institute of Technology Ropar, India,
\email{\textsuperscript{2}prabhakar@iitrpr.ac.in, \textsuperscript{3} vaibhav.23maz0022@iitrpr.ac.in}}

\maketitle 
\begin{abstract}
 
A twisted link is a generalization of a virtual link associated with link diagrams on closed surfaces, which may be non-orientable. In this paper, we generalize the notion of the index value for twisted knots. Based on this generalization, we introduce a polynomial invariant for twisted knots, called the $\mathcal{B}$-Index polynomial. Furthermore, we construct a family of twisted knots $\{D_n\}_{n\geq 1}$ with arc shift number $n$ and determine their $\mathcal{B}$-Index polynomials explicitly in terms of $n$. These results demonstrate the effectiveness and sensitivity of the $\mathcal{B}$-Index polynomial as an invariant of twisted knots. We also study the behavior of this polynomial under mirror images and orientation reversal. Furthermore, we conclude this paper by investigating the cosmetic crossing change conjecture and establishing a condition under which a crossing does not admit cosmetic behavior.
\end{abstract}

\keywords{Twisted knot, index polynomial, cosmetic crossing change}

\ccode{Mathematics Subject Classification 2020: 57K10, 57K12, 57K14}


\section{Introduction}

 Virtual knots were introduced by L.~Kauffman in \cite{kauffman1999virtual} as an extension of classical knots, representing them through diagrams that include both classical and virtual crossings. Virtual links may be interpreted as stable equivalence classes of links embedded in oriented $3$-manifolds that are line bundles over closed oriented surfaces~\cite{Carter,N K Kamada}.\\
 
 \noindent In \cite{Bourgoin}, M. O. Bourgoin introduced twisted links, defined as stable equivalence classes of links in oriented 3-manifolds that are line bundles over closed, possibly non-orientable surfaces. They are represented by twisted link diagrams that include both classical and virtual crossings, as well as bars on the arcs. An example of a twisted knot diagram is illustrated in Fig.~\ref{fig:twistedknot}.\\





\begin{figure}[htbp]
    \centering
     \begin{tikzpicture}[scale=0.7]
        \draw (0,0) -- (1,1);
        \draw[white, line width=5pt] (0,1) -- (1,0);
        \draw (0,1) -- (1,0);
        \draw (1,1)--(2,0);
        \draw (1,0) -- (2,1);
        \draw (1.5,0.5) circle (4pt);
        \draw (2,1) -- (3,0);
        \draw[white, line width=5pt] (2.1,0.1) -- (3,1);
        \draw (2,0) -- (3,1);
        \draw (3,1) .. controls (5,2.4) and (4,0.25) .. (3,0);
        \draw[white, line width=5pt] (3.53,1.33) to [out=-45, in=80] (4,0);
        \draw(1,1.5) .. controls (5,3) and (5,-2.5) .. (1,-0.5);
        \draw (0,1) to[out=135 , in=205 ] (1,1.5);
        \draw (0,0) to[out=235 , in=155 ] (1,-0.5);
        \draw (3.53,1.33) circle (4pt); 
        \draw (1,0.8) -- (1,1.15);
        \draw (3,0.2) -- (3,-0.15);
        \fill (0,0.22) -- (0.2,0.2) -- (0.2,0)-- cycle; 
         
        \end{tikzpicture}
    \caption{Twisted knot diagram.}
    \label{fig:twistedknot}
\end{figure}

\noindent Although twisted links naturally generalize virtual links, not all invariants of virtual links extend to twisted links. Furthermore, compared to virtual links, there are relatively few known invariants for twisted links.
We are mainly interested in invariants of polynomial type, as they constitute an important class of invariants for twisted links. 
Several polynomial invariants originally defined for virtual links have been generalized to twisted links, for instance, the Jones polynomial \cite{Bourgoin},  multivariable polynomial \cite{KamadaMulti},  the Miyazawa
polynomial \cite{KamadaMiyazawa}, and  surface bracket polynomial \cite{Kamadasurface}, etc.\\

\noindent In this paper, our aim is to introduce a new polynomial invariant for twisted knots. Our construction is motivated by  the index value of a crossing, defined in \cite{cheng2013polynomial}, and by Kauffman
affine index polynomial $P_K(t)$ \cite{kauffman2013affine}. Recall that for an oriented virtual knot $K$ represented by a diagram $D$, the affine index polynomial is defined by
\[P_D(t)=\displaystyle \sum_{c} \operatorname{sgn}(c)(t^{W_D(c)} -1),\] 
where $ \operatorname{sgn}(c) $ denotes the sign of the crossing $c$, $W_D(c)$ is the weight associated with the crossing $c$, and the sum is taken over all classical crossings of the diagram.
For each crossing $c$, the weight $W_D(c)$ defined in \cite{kauffman2013affine} coincides with the index value $\operatorname{Ind}(c)$ introduced in \cite{cheng2013polynomial}. These weights can also be computed using the wriggle number, an invariant of two-component virtual links (see \cite{folwaczny2013linking}).\\

\noindent We introduce two new indices, $\operatorname{Ind}^1(c)$ and $\operatorname{Ind}^2(c)$, associated with a crossing $c$, which are motivated by and generalize the method used to compute the wriggle number in the virtual setting ~\cite{folwaczny2013linking}. These indices enable us to extend the affine index polynomial to twisted knots and to define a new polynomial invariant, denoted by $\mathcal{B}_D(t)$, which we call the $\mathcal{B}$-Index polynomial. The nomenclature is specifically chosen to reflect the invariant's fundamental dependence on the bars present in twisted knot diagrams. This dependence allows the polynomial to capture topological information inherent to the non-orientable nature of the underlying surface, which is often lost in broader generalizations. A key observation of our construction is the polynomial's ability to distinguish between the twisted and virtual categories. Specifically, the $\mathcal{B}$-Index polynomial is trivial (zero) for all virtual knots. This vanishing property positions $\mathcal{B}_D(t)$ as a dedicated tool for studying the ``twistedness" of a knot, effectively filtering out classical and virtual components to isolate features unique to the twisted setting. \\

\noindent The paper is organized as follows. In Section \ref{pre}, we briefly recall the basic definitions  and concepts of twisted links that are required to establish the main results of the paper. In Section \ref{sec:pol}, we define the index values of a crossing and study the behavior of these index values under classical Reidemeister moves. We also introduce the $\mathcal{B}$-Index polynomial and establish its invariance. Furthermore, in \cite{KamadaKawataOkuboShimizu}, Kamada et al.\ posed the following question of whether the twisted knot diagram $3_{82}$ is equivalent to the trivial non-orientable curve (a
trivial loop with a bar). Since several known invariants, including the $X$-polynomial invariant, the twisted JKSS invariant, the twisted $3$-coloring number, twisted biquandle colorings, and the $X$-polynomial of the double covering diagram, fail to distinguish these diagrams, the question remained open. Using the $\mathcal{B}$-Index polynomial, we show that $3_{82}$ is not equivalent to the trivial non-orientable curve, thereby providing an answer to this question.\\

\noindent In Section~\ref{sec:pol}, we also construct a family of twisted knots $D_n$ with arc shift number $n$ and determine their $\mathcal{B}$-Index polynomials explicitly. The resulting formula depends on $n$, showing that the $\mathcal{B}$-Index polynomial detects the arc shift number for this family of twisted knots. Further, we examine the behavior of the $\mathcal{B}$-Index polynomial under mirror image and orientation reversal. 
In Section~\ref{sec:cs}, we investigate the cosmetic crossing change conjecture for twisted knots and provide a specific condition under which a crossing $c$ is not a cosmetic crossing.

\section{Preliminaries}\label{pre}
Twisted knots were  introduced by M. O. Bourgoin \cite{Bourgoin} as a generalization of virtual knots. They are represented by twisted knot diagrams that contain classical crossings, virtual crossings, and bars placed on the arcs of the diagram. Two twisted knot diagrams are considered equivalent if they are related by a finite sequence of classical Reidemeister moves, virtual Reidemeister moves, and twisted Reidemeister moves, shown in Fig.~\ref{fig:cls}, Fig.~\ref{fig:rei}, and Fig.~\ref{fig:Re:tw}, respectively.\\

\begin{figure}[htbp]
    \centering
        \begin{tikzpicture}[scale=1.4]
            \draw (-0.5,0.5) -- (0,-0.3);
            \draw[white, line width=5pt] (-0.5,-0.5) -- (0,0.3);
            \draw (-0.5,-0.5) -- (0,0.3);
            \draw (0,-0.3) to[out=45, in=-45] (0,0.3);
            \draw (0.3,0) -- (1,0);
            \draw(0.3,0) -- (0.4,0.1);
            \draw (0.3,0) -- (0.4,-0.1);
            \draw(0.9,0.1) --(1,0);
            \draw(0.9,-0.1) --(1,0);
            \node at (0.65,0.15) {\tiny $R_1$};
            \draw (1,0.5) .. controls (1.3,0) .. (1,-0.5);
            \draw (2,0.5) .. controls (2.25,0) .. (2,-0.5);
            \draw (2.5,0.5) .. controls (2.25,0) .. (2.5,-0.5); 
            \draw (2.55,0) -- (3.25,0);
            \draw(2.55,0) -- (2.65,0.1);
            \draw (2.55,0) -- (2.65,-0.1);
            \draw(3.15,0.1) --(3.25,0);
            \draw(3.15,-0.1) --(3.25,0);
            \node at (2.9,0.15) {\tiny $R_2$};
             \draw (3.4,0.5) .. controls (3.7,0) .. (3.4,-0.5);
             \draw[white, line width=3pt] (3.7,0.5) .. controls (3.4,0) .. (3.7,-0.5); 
            \draw (3.7,0.5) .. controls (3.4,0) .. (3.7,-0.5); 
             \draw (4.5,0.5) -- (5.5,-0.5);
            \draw[white, line width=4pt] (4.5,-0.5) -- (5.5,0.5);
            \draw (4.5,-0.5) -- (5.5,0.5);
             \draw[white, line width=4pt] (4.4,0.25) -- (5.6,0.25);
            \draw (4.4,0.25) -- (5.6,0.25);
            \draw (5.6,0) -- (6.3,0);
            \draw(5.6,0) -- (5.7,0.1);
            \draw (5.6,0) -- (5.7,-0.1);
            \draw(6.3,0) --(6.2,0.1);
            \draw(6.3,0) --(6.2,-0.1);
            \node at (5.95,0.15) {\tiny $R_3$};
             \draw (6.5,0.5) -- (7.5,-0.5);
            \draw[white, line width=4pt] (6.5,-0.5) -- (7.5,0.5);
            \draw (6.5,-0.5) -- (7.5,0.5);
             \draw[white, line width=4pt] (6.4,-0.25) -- (7.6,-0.25);
            \draw (6.4,-0.25) -- (7.6,-0.25);

        \end{tikzpicture}
        \caption{\small  Classical Reidemeister moves.}
        \label{fig:cls}
    \end{figure}
    \begin{figure}
    \centering
        \begin{tikzpicture}[scale=1.4]
        \begin{scope}[xshift=1cm]
            \draw (-0.5,0.5) -- (0,-0.3);
            \draw (-0.2,0) circle (2.5pt);
            \draw (-0.5,-0.5) -- (0,0.3);
            \draw (0,-0.3) to[out=45, in=-45] (0,0.3);
           \draw (0.3,0) -- (1,0);
            \draw(0.3,0) -- (0.4,0.1);
            \draw (0.3,0) -- (0.4,-0.1);
            \draw(0.9,0.1) --(1,0);
            \draw(0.9,-0.1) --(1,0);
            \node at (0.6,0.15) {\tiny $VR_1$};
            \draw (1,0.5) .. controls (1.3,0) .. (1,-0.5);
            \draw (2,0.5) .. controls (2.25,0) .. (2,-0.5);
            \draw (2.5,0.5) .. controls (2.25,0) .. (2.5,-0.5); 
             \draw (2.55,0) -- (3.25,0);
            \draw(2.55,0) -- (2.65,0.1);
            \draw (2.55,0) -- (2.65,-0.1);
            \draw(3.15,0.1) --(3.25,0);
            \draw(3.15,-0.1) --(3.25,0);
            \node at (2.9,0.15) {\tiny $VR_2$};
             \draw (3.4,0.5) .. controls (3.7,0) .. (3.4,-0.5);
             \draw (3.55,0.25) circle (2.5pt); 
             \draw (3.55,-0.25) circle (2.5pt); 
            \draw (3.7,0.5) .. controls (3.4,0) .. (3.7,-0.5); 
            \end{scope}
            \draw (3.2,-1) -- (4.2,-2);
            \draw (3.45,-1.25) circle (2.5pt);
            \draw[white, line width=4pt] (3.2,-2) -- (4.2,-1);
            \draw (3.2,-2) -- (4.2,-1);
           \draw (3.95,-1.25) circle (2.5pt);
            \draw (3.1,-1.25) -- (4.3,-1.25);
            \draw (4.3,-1.5) -- (4.9,-1.5);
            \draw(4.8,-1.4) -- (4.9,-1.5);
            \draw (4.8,-1.6) -- (4.9,-1.5);
            \draw(4.3,-1.5) --(4.4,-1.4);
            \draw(4.3,-1.5) --(4.4,-1.6);
            \node at (4.6,-1.35) {\tiny $SV$};
             \draw (5,-1) -- (6,-2);
            \draw[white, line width=4pt](5,-2) -- (6,-1);
            \draw (5,-2) -- (6,-1);
             \draw (5.25,-1.75) circle (2.5pt);
             \draw (5.75,-1.75) circle (2.5pt); 
            \draw (4.9,-1.75) -- (6.1,-1.75);
            \draw (-0.5,-1) -- (0.5,-2);
             \draw (0,-1.5) circle (2.5pt);
            \draw (-0.5,-2) -- (0.5,-1);
             \draw (-0.28,-1.25) circle (2.5pt);
             \draw (0.28,-1.25) circle (2.5pt);
            \draw (-0.6,-1.25) -- (0.6,-1.25);
            \draw (0.7,-1.5) -- (1.3,-1.5);
            \draw(0.7,-1.5) -- (0.8,-1.4);
            \draw (0.7,-1.5) -- (0.8,-1.6);
            \draw(1.3,-1.5) --(1.2,-1.4);
            \draw(1.3,-1.5) --(1.2,-1.6);
            \node at (0.98,-1.35) {\tiny $VR3$};
             \draw (1.5,-1) -- (2.5,-2);
            \draw(2,-1.5) circle (2.5pt);
            \draw (1.5,-2) -- (2.5,-1);
             \draw (1.75,-1.75) circle (2.5pt);
             \draw (2.28,-1.75) circle (2.5pt);
            \draw (1.4,-1.75) -- (2.6,-1.75);
        \end{tikzpicture}
    \caption{\small  Virtual Reidemeister moves.}
    \label{fig:rei}
\end{figure}

\begin{figure}
    \centering
   \begin{tikzpicture}[scale=0.8]
    \draw (-1,1) -- (1,-1);
    \draw (-1,-1) -- (1,1);
    \draw (0,0) circle (3pt);
    \draw (0.4,0.6) -- (0.6,0.4);
\draw (1,0) --(2,0);
\draw (1.8,0.2) --(2,0);
\draw (1.8,-0.2) --(2,0);
\draw (1,0) --(1.2,0.2);
\draw (1,0) --(1.2,-0.2);
 \node at (1.5,0.3) {\tiny TI};
 
\draw (2,1) -- (4,-1);
    \draw (2,-1) -- (4,1);
    \draw (3,0) circle (3pt);
    \draw (2.4,-0.4) -- (2.6,-0.6);

    \draw (5.2,1) -- (5.2,-1);
    \draw (5,0.5) -- (5.4,0.5);
     \draw (5,-0.5) -- (5.4,-0.5);
     \draw (5.6,0) -- (6.6,0);
    \draw (5.8,0.2) --(5.6,0);
    \draw (5.8,-0.2) --(5.6,0);
\draw (6.6,0) --(6.4,0.2);
\draw (6.6,0) --(6.4,-0.2);
\draw (6.9,1) -- (6.9,-1);
\node at (6.08,0.3) {\tiny TII};

\draw (8.2,1) -- (10.2,-1);
\draw[white, line width =6pt] (8.2,-1) -- (10.2,1);
\draw (8.2,-1) -- (10.2,1);
\draw(8.6,0.4) -- (8.8,0.6);
\draw(8.6,-0.4) -- (8.8,-0.6);
\draw(9.8,0.4) -- (9.6,0.6);
\draw(9.8,-0.4) -- (9.6,-0.6);  
\draw (10.2,0) -- (11.2,0);
\draw (10.2,0) -- (10.4,0.2);
\draw (10.2,0) -- (10.4,-0.2);
\draw (11,0.2) -- (11.2,0);
\draw (11,-0.2) -- (11.2,0);
\node at (10.65,0.3) {\tiny TIII};

\draw (13,1.5) to[out=-90  , in=120 ] (12,0.5);
\draw (13,0.5) to[out=60  , in=-90 ] (12,1.5);
\draw (12,0.5) -- (13,-0.5);
\draw[white, line width=6pt] (12,-0.5) -- (13,0.5);
\draw (12,-0.5) -- (13,0.5);
\draw (13,-1.5) to[out=90  , in=-120 ] (12,-0.5);
\draw (13,-0.5) to[out=-60  , in=90 ] (12,-1.5);
 \draw (12.5,1) circle (3pt);
 \draw (12.5,-1) circle (3pt);
\end{tikzpicture}
    \caption{\small Twisted Reidemeister moves.}
    \label{fig:Re:tw}
\end{figure}

\noindent A trivial twisted knot is either a trivial knot or a trivial knot with a bar. Let $D$ be an oriented twisted knot diagram, and let $c$ be a crossing of $D$. The sign of the crossing $c$, determined by the orientation of the arcs, is assigned a weight of either $+1$ or $-1$, as shown in Fig.~\ref{fig2}. \\  
\begin{figure}
    \centering
    \begin{subfigure}{}
        \begin{tikzpicture}
            \draw (0,1) -- (1,0);
            \draw[white, line width=4pt] (0,0)-- (1,1);
            \draw (0,0)-- (1,1);
            \node at (0.5,0.3) {$c$};
            \node at (0,1) [
  fill=black,
  regular polygon,
  regular polygon sides=3,
  rotate=45,
   scale=0.25
] {};
\node at (1,1) [
  fill=black,
  regular polygon,
  regular polygon sides=3,
  rotate=-45,
   scale=0.25
] {};
\node at (0.5,-0.5) {$\operatorname{sgn}(c)=+1$};
        \end{tikzpicture}
    \end{subfigure}
    \begin{subfigure}{}
        \begin{tikzpicture}
            \draw (0,0)-- (1,1);
            \draw[white, line width=4pt] (0,1) -- (1,0);
            \draw (0,1) -- (1,0);
            \node at (0.5,0.3) {$c$};
            \node at (0,1) [
  fill=black,
  regular polygon,
  regular polygon sides=3,
  rotate=45,
   scale=0.25
] {};
\node at (1,1) [
  fill=black,
  regular polygon,
  regular polygon sides=3,
  rotate=-45,
   scale=0.25
] {};
\node at (0.5,-0.5) {$\operatorname{sgn}(c)=-1$};
        \end{tikzpicture}
    \end{subfigure}
    \caption{Crossing signs.}
    \label{fig2}
\end{figure}

\noindent Let $D$ be an oriented twisted knot diagram. The diagram $\overline{D}$ is obtained from $D$ by reversing its orientation. The mirror image of $D$, denoted by $D^{*}$, is obtained by interchanging the overcrossing and undercrossing strands at every classical crossing of $D$.\\

\noindent In \cite{komal2024arc}, Komal and Prabhakar introduced a new invariant called the arc shift number for twisted knots, which extends the invariant arc shift number for virtual knots defined in \cite{Gillarc}. 
\noindent An arc in a twisted knot diagram, say $(a, b)$, is defined as a segment that passes through exactly one pair of crossings (classical or virtual), denoted by  $(c_1, c_2)$, where $a$ is incident to $c_1$ and $b$ is incident to $c_2$. Between these crossing points, bars may
or may not appear. 
\noindent An arc shift move on an arc $(a,b)$ is obtained by cutting the twisted knot diagram $D$ at the points $a$ and $b$ and reconnecting the resulting endpoints after interchanging them. Any new intersections created during this local transformation are regarded as virtual crossings, and the orientation of the arc is reversed. The arc shift move on the arc $(a,b)$ is illustrated in Fig.~\ref{arcshift}.\\

\noindent In \cite{komal2024arc}, it was proved that the arc shift is an unknotting operation for twisted knots.
Based on this operation, a twisted knot invariant, called the arc shift number, was defined.\\
\begin{figure}[htbp]
    \centering
    \begin{tikzpicture}[scale=0.8]
        \draw (-8,-0.25) -- (-5,-0.25);
        \draw (-7,1) -- (-7,-0.15);
        \draw (-7,-0.35) -- (-7,-1);
        \draw[white, line width=6pt] (-6,1) -- (-6,-1);
        \draw (-6,1) -- (-6,-1);
        \fill (-7.5,-0.25) circle (1pt);
        \fill (-5.5,-0.25) circle (1pt);
        \node at (-7.5,-0.5) {\small $a$};
         \node at (-5.5,-0.5) {\small $b$};
          \node at (-6.8,-0.5) {\small $c_1$};
         \node at (-6.2,-0.5) {\small $c_2$};
         \draw (-4.5,-0.25) -- (-2.5,-0.25);
         \draw (-2.5,-0.25) -- (-2.7,-0.15);
         \draw (-2.5,-0.25) -- (-2.7,-0.35);
         \node at (-3.5,-0.05) {Cutting};
         \node at (-3.5,-0.45) {out arc};
         \draw (-2,-0.25) -- (-1.6,-0.25);
         \draw (-1.4,-0.25) -- (0.4,-0.25);
         \draw (0.6,-0.25) -- (1,-0.25);
         \draw (-1,1) -- (-1,-0.15);
          \draw (-1,-0.35) -- (-1,-1);
          \draw[white, line width=6pt] (0,-1) -- (0,1);
         \draw (0,-1) -- (0,1);
         \fill (-1.4,-0.25) circle (1pt);
         \fill (0.4,-0.25) circle (1pt);
         \node at (-1.5,-0.5) {\small $a$};
         \node at (0.5,-0.5) {\small $b$};
          \node at (-0.8,-0.5) {\small $c_1$};
         \node at (-0.2,-0.5) {\small $c_2$};
         \draw (1.5,-0.25) -- (3.5,-0.25);
         \draw (3.3,-0.15) -- (3.5,-0.25);
         \draw (3.3,-0.35) -- (3.5,-0.25);
         \node at (2.5,-0.05) { Re-glue };
         \draw (4,-0.25) -- (4.4,-0.25);
         \draw (4.6,-0.25) -- (6.4,-0.25);
         \draw (6.6,-0.25) -- (7,-0.25);
         \draw (5,1) -- (5,-0.2);
         \draw (5,-0.35) -- (5,-1);
         \draw[white, line width=6pt] (6,1) -- (6,-1);
         \draw (6,1) -- (6,-1);
         \draw (4.4,-0.25) to[in=150 , out=60 ] (6.4,-0.25);
         \draw (4.6,-0.25) to[in=120 , out=30 ] (6.6,-0.25);
         \draw (5,-0.07) circle (2pt);
         \draw (5,0.15) circle (2pt);
         \draw (5.5,0.12) circle (2pt);
         \draw (6,-0.07) circle (2pt);
         \draw (6,0.15) circle (2pt);
         \fill (4.6,-0.25) circle (1pt);
         \fill (6.4,-0.25) circle (1pt);
          \node at (4.6,-0.5) {\small $a$};
         \node at (6.4,-0.5) {\small $b$};
          \node at (5.2,-0.5) {\small $c_1$};
         \node at (5.8,-0.5) {\small $c_2$};
    \end{tikzpicture}
    \caption{An arc shift move. }
\label{arcshift} 
\end{figure}
 \begin{definition}\label{def:arc}\cite{komal2024arc} For any given twisted knot $K$, the arc
shift number, denoted by $A(K)$, is given as
\[A(K)=\min \{A(D)|~D \text{~represents~} K\},\]
 where $A(D)$ is the minimum number of arc shift
moves required to transform a diagram $D$ into a trivial twisted knot.\end{definition}
\begin{theorem}\cite{komal2024arc}
Arc shift number is an invariant for twisted knots.
\end{theorem}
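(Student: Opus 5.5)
The proof rests on two facts: the arc shift number is well defined as a natural number, and as an integer-valued quantity it depends only on the equivalence class. The first step is to check that the minimum in Definition~\ref{def:arc} is taken over a nonempty set of nonnegative integers. By the result of \cite{komal2024arc} recalled above, the arc shift move is an unknotting operation for twisted knots. So for every diagram $D$ there is a finite sequence of arc shift moves turning $D$ into a trivial twisted knot diagram, that is, a trivial knot or a trivial knot with a bar. Hence $A(D)$ is a well-defined nonnegative integer for each diagram $D$. The set $\{A(D) \mid D \text{ represents } K\}$ is therefore a nonempty subset of $\mathbb{Z}_{\geq 0}$, and by well-ordering its minimum $A(K)$ exists.

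The second step is invariance. Suppose $K$ and $K'$ are equivalent twisted knots. A diagram represents $K$ exactly when it is related to a diagram of $K$ by a finite sequence of classical, virtual and twisted Reidemeister moves (Figs.~\ref{fig:cls}, \ref{fig:rei}, \ref{fig:Re:tw}). Equivalence of diagrams is an equivalence relation, so the class of diagrams representing $K$ coincides with the class representing $K'$. The minimum is taken over identical sets, so $A(K)=A(K')$. I will also note that the target notion of "trivial twisted knot" is itself closed under equivalence, since it is defined as the equivalence class of the unknot or of the unknot with a bar. Consequently $A(D)$ means the number of moves needed to reach \emph{some} diagram of a trivial twisted knot, not a specific standard picture. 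This keeps the definition insensitive to how the endpoint is drawn.

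The only real obstacle is the existence statement, namely that arc shifts unknot every twisted diagram. That is exactly the theorem of \cite{komal2024arc} cited earlier in this section, so I would simply invoke it. If a self-contained argument were wanted, I would adapt the virtual-knot argument of \cite{Gillarc}. The idea is to perform arc shifts on arcs containing classical crossings until every classical crossing is converted into a virtual one, checking that the procedure terminates. Then twisted moves TI–TIII can slide and cancel bars in pairs, and virtual moves can unfold the resulting diagram, which has only virtual crossings and bars, to a trivial circle with zero or one bar.
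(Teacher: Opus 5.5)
Your argument is correct and is the standard one. The paper does not reprove this statement; it simply quotes it from \cite{komal2024arc}, and invariance of $A(K)$ is immediate once the unknotting theorem guarantees that each $A(D)$ is finite, since $A(K)$ is a minimum over the entire class of diagrams representing $K$.

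One caveat concerns only your optional self-contained sketch. An arc shift never turns a classical crossing into a virtual one: $c_1$ and $c_2$ stay classical with the same over/under data, and the move only reverses the order and direction in which the strand meets them (flipping their signs) and adds virtual crossings. A direct unknotting proof would therefore have to exploit that reordering, e.g.\ making chords of the Gauss diagram isolated and then deleting them by RI moves, rather than any conversion of crossings.
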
 
\noindent The notion of odd crossings and the odd writhe number was introduced by Kauffman in \cite{kauffman2004} for virtual knots, and later extended to twisted knots in \cite{komal2024arc}.  A crossing $c$ in a twisted knot diagram $ D$ is called  odd if, while traversing the diagram starting from 
$c$ and returning to $c$ along a complete path, one encounters an odd number of classical crossings. The odd writhe number $J(D)$ of $D$ is defined as the sum of the
signs of all the odd crossings in $D$.
\begin{theorem}\cite{komal2024arc} Odd writhe $J(K)$ is an invariant for twisted knot $K$.\end{theorem}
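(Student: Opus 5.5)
The plan is to show that $J(D)$ is unchanged under each generating move: the classical moves $R_1,R_2,R_3$, the virtual moves $VR_1,VR_2,VR_3,SV$, and the twisted moves $TI,TII,TIII$. The first step is to make the parity notion precise. For a classical crossing $c$, traversing the diagram from $c$ back to $c$ splits the knot into two loops. Since every other classical crossing is met twice in total, the two loops meet classical crossings with the same parity, so ``odd'' is well defined whichever half is taken. Equivalently, in the Gauss diagram (with bars recorded as marks on the circle and virtual crossings ignored), $c$ is odd exactly when its chord is linked with an odd number of other chords. Bars do not change the underlying Gauss chord structure at all; they only record local twisting. So parity is purely a function of the cyclic order of crossing labels along the knot.

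Next comes the move-by-move check, using this Gauss description. Virtual moves and the twisted moves $TI$, $TII$ do not alter the cyclic sequence of classical crossings or their signs, so $J$ is unchanged. For $R_1$, the new crossing $c$ has a chord linked with nothing, so it is even and contributes nothing; all other parities are unchanged. For $R_2$, the two new crossings have adjacent, parallel chords with opposite signs. Any third chord links both or neither of them, so the two have the same parity and their contributions cancel; other crossings see their linking count change by $0$ or $2$. For $R_3$, the three crossings keep their signs, and each pair of their chords merely swaps adjacent endpoints. The linking between any one of them and a crossing outside the triangle is preserved, and among the three the linking pattern is preserved as well, so parities are preserved.

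The main obstacle is $TIII$, together with the variant of $R_1/R_2$ with bars present, because crossing signs in a twisted diagram are computed from local orientations, and passing bars through a crossing may reverse strands. In $TIII$ a classical crossing with four bars around it is replaced by the crossing with the over/under information swapped. I would check directly from Fig.~\ref{fig2} that the sign is unchanged. Both strands have their orientation reversed on the far side after passing two bars, and the over/under swap compensates, so the sign is preserved. The Gauss word is also unchanged up to relabeling, so parities are preserved. For crossings created in $R_1$ and $R_2$ the argument above did not use signs beyond the $R_2$ pair having opposite signs, and that holds locally regardless of bars elsewhere. Combining these cases gives invariance of $J$.
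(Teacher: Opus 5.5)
The paper does not prove this statement; it quotes it from Komal and Prabhakar's work on the arc shift number, so there is no in-paper argument to compare with. Your Gauss-diagram strategy is the natural one and it works. However, the $R_3$ step as written rests on a false claim.

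You correctly note that in $R_3$ each of the three strands meets its two triangle crossings consecutively, and that the move reverses that order. So each pair of triangle chords swaps one pair of adjacent endpoints. But interchanging two adjacent endpoints belonging to chords $x$ and $y$ toggles whether $x$ and $y$ are linked. Hence after $R_3$ every one of the three pairwise linkings among the triangle crossings is reversed, not preserved.

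For instance, label the triangle crossings $c_{12},c_{13},c_{23}$ by the strands meeting there. The circle word $c_{12}c_{13}\,|\,c_{12}c_{23}\,|\,c_{13}c_{23}$ becomes $c_{13}c_{12}\,|\,c_{23}c_{12}\,|\,c_{23}c_{13}$. The pattern linked/unlinked/linked for the pairs $\{c_{12},c_{13}\}$, $\{c_{12},c_{23}\}$, $\{c_{13},c_{23}\}$ becomes unlinked/linked/unlinked.

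What saves the argument is a parity count. Each triangle crossing shares exactly one strand segment with each of the other two. So its linking with each of them is toggled exactly once, and its number of linked chords changes by $-2$, $0$ or $2$. Together with your correct observation that linking with chords outside the triangle is unchanged, this gives preservation of parity. You should replace the false sentence with this count.

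Two smaller points. First, in $R_2$ the new chords are parallel only when the two strands are oppositely oriented. When they are coherently oriented, the chords are linked with each other. This adds $1$ to both counts, so your conclusion that they have the same parity still stands.

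Second, in TIII your heuristic that orientations are reversed after passing bars is not right: bars never change the orientation of the underlying curve. The correct local statement is as follows. After the move, each strand's tangent at the classical crossing is the reflection of its old tangent across the vertical axis of the picture in Fig.~\ref{fig:Re:tw}, and the over/under roles of the two strands are exchanged. The reflection changes the sign of $\det(\text{over tangent},\text{under tangent})$, and the exchange changes it back, so $\operatorname{sgn}$ is preserved. Equivalently, the right-hand side is the crossing rotated by $\pi$ in $3$-space, with the endpoints restored by virtual crossings.

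Since the cyclic sequence of classical crossings is unchanged, only that chord's direction flips, so parity is preserved as you say. With the $R_3$ step repaired, your argument proves the theorem.
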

\begin{theorem}\cite{komal2024arc}\label{thm:prearc} The odd writhe $J(K)$ forms a lower bound for the arc shift number $A(K)$
of twisted knot $K$. In particular,
\[\left|\dfrac{J(K)}{2}\right|\leq A(K).\]
\end{theorem}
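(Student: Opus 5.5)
The plan is to show two things: a single arc shift move changes the odd writhe $J$ by at most $2$ in absolute value, and every trivial twisted knot has odd writhe $0$. Given these, suppose $D$ represents $K$ and can be turned into a trivial twisted knot diagram $D_0$ by $A(D)$ arc shift moves. Summing the changes gives $|J(D)-J(D_0)|\le 2A(D)$. Since $J(D_0)=0$ and $J$ is a twisted knot invariant by the preceding theorem, $|J(K)|=|J(D)|\le 2A(D)$. Minimizing over all diagrams $D$ of $K$ then gives $|J(K)/2|\le A(K)$.

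The vanishing for trivial twisted knots is immediate. The trivial knot and the trivial knot with a bar both have crossingless diagrams, so $J=0$, and invariance of $J$ transfers this to every diagram of the class.

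The core step is a local analysis of one arc shift move on an arc $(a,b)$ lying between the crossings $c_1$ and $c_2$. First I will check that every classical crossing other than $c_1,c_2$ keeps its sign and its parity. For parity, I will use the Gauss-diagram viewpoint: the move reverses the short segment of the Gauss word between $a$ and $b$. That segment contains no classical crossing symbol except possibly the endpoints' crossings, so for any other crossing $c$ the number of classical crossings met on each of the two loops determined by $c$ is unchanged. New intersections are virtual, and bars do not contribute to the count. For sign, the reversed orientation affects only the tiny arc, which passes through no classical crossing other than $c_1,c_2$, so all other signs are preserved. Hence only $c_1$ and $c_2$ can change their contribution to $J$, and each contributes at most $1$ in absolute value before and after. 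Therefore $|\Delta J|\le 2$.

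I expect the main obstacle to be the careful bookkeeping at $c_1$ and $c_2$, and in particular the degenerate case $c_1=c_2$, where the arc is a loop. There I will argue directly that the contribution of that single crossing changes by at most $2$, which is still within the bound. I also need to confirm that the definition of oddness via traversal is the Gauss-word interlacing count, so that reversing a segment free of other classical symbols preserves it. The presence of bars is harmless here, since oddness counts only classical crossings.
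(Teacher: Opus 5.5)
Your global structure is sound. $J$ vanishes on both trivial twisted knots, and it is unchanged by the equivalence moves allowed between arc shifts. So it suffices to bound the change of $J$ under a single arc shift and then minimize over diagrams. Your check that every classical crossing other than $c_1,c_2$ keeps both its sign and its parity is also correct.

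The gap is in the last inference of the local step. From ``each of $c_1,c_2$ contributes at most $1$ in absolute value before and after'' you only get $|\Delta J|\le 4$. The contribution $\operatorname{sgn}(c)\cdot[c\text{ odd}]$ of a single crossing can jump from $+1$ to $-1$; for instance, two odd positive crossings becoming two odd negative ones would give $\Delta J=-4$. That bound would only yield $|J(K)|/4\le A(K)$. The case you single out as the obstacle, $c_1=c_2$, is actually harmless: a kink's chord has no letters between its endpoints, so that crossing is even both before and after the move. The real content is the generic case where $c_1\neq c_2$ are both classical.

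The missing idea is that in this case the move flips the sign and the parity of each $c_i$ simultaneously.
\begin{itemize}
\item The sign flips because exactly one of the two strands through $c_i$ lies on the shifted arc, and its orientation is reversed.
\item The parity flips because $c_1,c_2$ are adjacent letters of the Gauss word and the move replaces $\cdots c_1c_2\cdots$ by $\cdots c_2c_1\cdots$. This toggles whether the chords of $c_1$ and $c_2$ interlace, so the number of classical crossings met on the loop from $c_i$ back to $c_i$ changes by exactly one.
\end{itemize}
Write $s=\operatorname{sgn}(c_i)$ and let $\chi\in\{0,1\}$ indicate that $c_i$ is odd. The contribution of $c_i$ goes from $s\chi$ to $-s(1-\chi)$, i.e.\ it changes by exactly $-s$. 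Hence $\Delta J=-\bigl(\operatorname{sgn}(c_1)+\operatorname{sgn}(c_2)\bigr)\in\{0,\pm 2\}$.

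The remaining cases are easy. If exactly one of $c_1,c_2$ is classical, the Gauss word order is unchanged and only that crossing's sign flips, so $\Delta J=-2s\chi\in\{0,\pm 2\}$. If both are virtual, $\Delta J=0$.

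With this case analysis added, your argument gives $|J(D)|\le 2A(D)$ and hence the theorem. The paper itself only quotes this result from the cited source, so there is no in-paper proof to compare your route against.
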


\section{$\mathcal{B}$-Index polynomial of twisted knot}\label{sec:pol}
Let $D$ be an oriented twisted link diagram. In this paper, a twisted link means a two-component twisted link.
  \begin{definition}
Let $D$ be an oriented twisted knot diagram. If $D$ contains at most one bar, then the entire diagram is regarded as a single arc. Otherwise, an \emph{arc} of $D$ is a segment of the diagram between two consecutive bars, taken along the orientation.
\end{definition}

\begin{remark}
    Note that the definition of an arc used in Section~3 and thereafter differs from that in \cite{komal2024arc}. Throughout this paper, an arc in the sense of \cite{komal2024arc} and as used in the definition~\ref{def:arc} of the arc shift number in Section~2 will be referred to as an \emph{arc segment}.
\end{remark}

\begin{definition} Two arcs $C_1$ and $C_2$ of a twisted link diagram $D$ are said to be consecutive if, while traversing along the orientation from $C_1$ to $C_2$, the two arcs share a common bar. For example, arcs $C_1$ and $C_2$ illustrated in Fig.~\ref{fig:exp1} are  consecutive arcs. 
\end{definition}

\noindent A sequence of arcs $\langle C_1,C_2,\ldots,C_n \rangle$ in a twisted link diagram $D$ is said to be consecutive if, for each $i=1,2,\ldots,n-1$, the arcs $C_i$ and $C_{i+1}$ are consecutive.\\

\noindent It follows that any sequence $\langle C_1,C_2,\ldots, C_n \rangle$ of consecutive arcs in a link diagram $D$, beginning at $C_1$, traverses all arcs of the link component containing $C_1$.

\begin{definition} A linking crossing $c$ is said to be an \emph{over} (respectively, \emph{under}) linking crossing of an oriented arc $C$ if, while traversing $C$ along its orientation, the crossing $c$ is encountered as an overcrossing (respectively, undercrossing). The set of all over linking crossings of $C$ is denoted by $\mathcal{O}(C)$, and the set of all under linking crossings of $C$ is denoted by $\mathcal{U}(C)$.
\end{definition}

\begin{definition}
    Let $D$ be a twisted link diagram with an arc $C_1$, and let $\langle C_1,C_2,\ldots, C_n \rangle$ be the sequence of all consecutive arcs of $D$ starting from $C_1$. Then, the set of all over linking crossings of $D$ with respect to arc $C_1$, denoted by $\mathcal{O}_{L}(D:C_1)$, is defined as, 
\[
\mathcal{O}_{L}(D:C_1)=
\begin{cases}
\mathcal{O}(C_1)\cup \mathcal{U}(C_2) \cup \mathcal{O}(C_3)\ldots \cup \mathcal{U}(C_n),~ ~\text{if n is even,}\\
\mathcal{O}(C_1)\cup \mathcal{U}(C_2) \cup \mathcal{O}(C_3)\ldots \cup \mathcal{O}(C_n),~~\text{if n is odd.}
\end{cases}
\]
    
\noindent Similarly, the set of all under linking crossings of $D$ with respect to arc $C_1$, denoted by $\mathcal{U}_{L}(D:C_1)$, is defined as, \\
\[
\mathcal{U}_{L}(D:C_1)=
     \begin{cases}
       \mathcal{U}(C_1)\cup \mathcal{O}(C_2) \cup \mathcal{U}(C_3)\ldots \cup \mathcal{O}(C_n) , &~\text{if ~} n \text{~is even},\\
     \mathcal{U}(C_1)\cup \mathcal{O}(C_2) \cup \mathcal{U}(C_3)\ldots \cup \mathcal{U}(C_n) , &~\text{if ~} n \text{~is odd}. 
     \end{cases}
\]
\end{definition}

\vspace{0.2cm}
\noindent We now define the index values of a crossing in an oriented twisted knot diagram. Let $D$ be an oriented twisted knot diagram and let $c^*$ be a crossing of $D$.  
To compute the index values of crossing $c^*$, we first mark points $\alpha$ and $\beta$ near the under crossing and over crossing of $c^*$, respectively, as shown in Fig.~\ref{fig:Fig1}(a). Next, we  perform a smoothing operation at $c^*$ along the orientation as shown in Fig.~\ref{fig:Fig1}(b), which yields a two component twisted link diagram $D_{c^*}$. Let $C$ and $C'$ be the arcs of $D_{c^*}$ that contain the base points $\alpha$ and $\beta$, respectively. 
Assume that the component containing $\alpha$ consists of $n$ arcs, while the component containing $\beta$ consists of $m$ arcs.
\begin{figure}
    \centering
    \begin{subfigure}{}
        \begin{tikzpicture}
        \draw(0,1)--(1,0);
        \draw[white, line width=5pt](0,0)--(1,1);
            \draw (0,0)--(1,1);
            \fill (0,0.8) -- (0.2,1) --(0,1)--cycle;
            \fill (1,1) -- (0.8,1) -- (1,0.8) -- cycle;
            \fill (0.2,0.2) circle (1.5pt);
            \fill (0.8,0.2) circle (1.5pt);
            \node at (0.05,0.3) {$\beta$};
            \node at (1,0.3) {$\alpha$};
            \node at (0.75,0.55) {$c^*$};
            \node at (0.5,-0.5) {\small (a) Crossing $c^*$};
        \end{tikzpicture}
    \end{subfigure}
    \begin{subfigure}{}
        \begin{tikzpicture}
            \draw (0,0) .. controls (0.5,0.5) .. (0,1);
            \draw (1,0) .. controls (0.5,0.5) .. (1,1);
            \fill (0,0.8) -- (0.2,1) --(0,1)--cycle;
            \fill (1,1) -- (0.8,1) -- (1,0.8) -- cycle;
            \fill (0.1,0.1) circle (1.5pt);
            \fill (0.9,0.1) circle (1.5pt);
            \node at (-0.05,0.2) {$\beta$};
            \node at (1.05,0.2) {$\alpha$};
            \node at (0.85,0.5) {\small $C$};
            \node at (0.12,0.5) {\small $C'$};
            \node at (0.5,-0.5) {\small (b) Smoothing along the orientation};
        \end{tikzpicture}
    \end{subfigure}
    \caption{Crossing $c^*$ of twisted knot diagram $D$ and twisted link diagram $D_{c^*}$.}
    \label{fig:Fig1}
\end{figure}
\\ 

\noindent To compute the index value of $c^*$ with respect to the base point $\alpha$, we consider the sequence of all consecutive arcs starting from $C$, denoted by $\langle C=C_1,C_2,\ldots, C_n \rangle $.
Then the index value  of $c^*$ with respect to the base point $\alpha$, denoted by $\operatorname{Ind}^1(c^*)$, is defined as
\[\operatorname{Ind}^1(c^*)=\displaystyle \sum_{c\in \mathcal{O}_{L}(D_{c^*}:C) }\operatorname{sgn}(c)- \displaystyle \sum_{c\in \mathcal{U}_{L}(D_{c^*}: C) }\operatorname{sgn}(c).\]   

\noindent Similarly, we consider the sequence of all consecutive arcs starting from $C'$, denoted by $\langle C'=C'_1,C'_2,\ldots, C'_m \rangle$.
Then the index value  of $c^*$ with respect to the base $\beta$, denoted by $\operatorname{Ind}^2(c^*)$, is defined as
\[\operatorname{Ind}^2(c^*)=\displaystyle \sum_{c\in \mathcal{O}_{L}(D_{c^*}: C') }\operatorname{sgn}(c)- \displaystyle \sum_{c\in \mathcal{U}_{L}(D_{c^*}: C') }\operatorname{sgn}(c).\]   

\begin{figure}[htbp]
    \centering
    \begin{subfigure}{}
    \begin{tikzpicture}[scale = 0.6]
        \draw (0,8)--(1,7);
        \draw[white, line width=6pt] (1,8)--(0,7);
        \draw (1,8)--(0,7);
        \draw (0,7) -- (1,6);
        \draw[white, line width=6pt] (1,7) -- (0,6);
        \draw (1,7) -- (0,6);
        \draw (0,6) -- (1,5);
        \draw[white, line width=6pt] (1,6) -- (0,5);
        \draw (1,6) -- (0,5);
        \draw (0,5) -- (1,4);
         \draw (0.5,4.5) circle (0.2cm); 
        \draw (1,5) -- (0,4);
        \draw (0,4) -- (1,3);
        \draw[white, line width=6pt] (1,4) -- (0,3);
        \draw (1,4) -- (0,3);
        \draw (0,3) -- (1,2);
        \draw (1,3) -- (0,2);
        \draw (0,2) -- (1,1);
        \draw[white, line width=6pt] (1,2) -- (0,1);
        \draw (1,2) -- (0,1);
        \draw (0,8) to[out=135 , in=235 ] (0,1);
        \draw(1,8) to[out=45 , in=315 ] (1,1);
        \draw(0.5,2.5) circle (0.2cm);
        \draw (0.8,6)--(1.2,6);
        \draw (0.8,4)--(1.2,4);
        \draw (-1.5,4)--(-1.1,4);
        \fill (0,1.8) -- (0.2,2.05) -- (0,2) -- cycle;
       \fill (1,1.8) -- (0.8,2.05) -- (1,2) -- cycle;
       \node at (0.85,1.5) {\footnotesize $c_1^*$};
        \node at (0.85,3.5) {\footnotesize $c_2^*$};
          \node at (0.85,5.5) {\footnotesize $c_3^*$};
           \node at (0.85,6.5) {\footnotesize $c_4^*$};
            \node at (0.85,7.55) {\footnotesize $c_5^*$} ;
            \draw[fill] (0,1) circle (0.06cm);
            \draw[fill] (1,1) circle (0.06cm);
            \node at (0,0.7) {\small $\beta$};
            \node at (1,0.7) {\small $\alpha$};
            \node at (0.5,0) {$D$};
    \end{tikzpicture}
    \end{subfigure}
    \begin{subfigure}{}
        \begin{tikzpicture}[scale=0.6]
        \draw (0,8)--(1,7);
        \draw[white, line width=6pt] (1,8)--(0,7);
        \draw (1,8)--(0,7);
        \draw (0,7) -- (1,6);
        \draw[white, line width=6pt] (1,7) -- (0,6);
        \draw (1,7) -- (0,6);
        \draw (0,6) -- (1,5);
        \draw[white, line width=6pt] (1,6) -- (0,5);
        \draw (1,6) -- (0,5);
        \draw (0,5) -- (1,4);
        \draw (0.5,4.5) circle (0.2cm);
        \draw (1,5) -- (0,4);
        \draw (0,4) -- (1,3);
        \draw[white, line width=6pt] (1,4) -- (0,3);
        \draw (1,4) -- (0,3);
        \draw (0,3) -- (1,2);
        \draw (1,3) -- (0,2);
        \draw (0,2) to[out=235 , in=45] (0,1);
        \draw (1,2) to[out=-45 , in=135] (1,1);
        \draw (0,8) to[out=135 , in=235 ] (0,1);
        \draw(1,8) to[out=45 , in=315 ] (1,1);
        \draw(0.5,2.5) circle (0.2cm);
        \draw (0.8,6)--(1.2,6);
        \draw (0.8,4)--(1.2,4);
        \draw (-1.5,4)--(-1.1,4);
       \fill (-0.15,1.8) -- (0.05,1.8) -- (-0.02,2) -- cycle;
        \fill (0.9,1.8) -- (1.15,1.8) -- (1.02,2) -- cycle;
         \node at (0.85,3.5) {\footnotesize $c_2^*$};
          \node at (0.85,5.5) {\footnotesize $c_3^*$};
           \node at (0.85,6.5) {\footnotesize $c_4^*$};
            \node at (0.85,7.55) {\footnotesize $c_5^*$} ;
            \draw[fill] (0,1) circle (0.06cm);
            \draw[fill] (1,1) circle (0.06cm);
            \node at (0,0.7) {\small $\beta$};
            \node at (1,0.7) {\small $\alpha$};
             \node at (0.5,0) {$D_{c_1^*}$};
             \node at (2.8,4) {\small \textbf{$C_1$}};
             \node at (-0.4,5) {\small \textbf{$C_2$}};
             \node at (-1.7,5) {\small \textbf{$C_3$}};
    \end{tikzpicture}
    \end{subfigure}
    \caption{Twisted knot diagrams $D$ and $D_{c_1^*}$.}
    \label{fig:exp1}
\end{figure}

\begin{example}
Consider a twisted knot diagram $D$ with a crossing $c_1^*$, as shown in Fig.~\ref{fig:exp1}. Let $D_{c_1^*}$ denote the twisted link diagram  obtained from $D$ by smoothing the crossing $c_1^*$ along the orientation with base points $\alpha$ and $\beta$. In the diagram $D_{c_1^*}$, let $C_1, C_2$, and $C_3$ represent the arcs, as shown in Fig.~\ref{fig:exp1}. Since the arcs $C_1$ and $C_3$ contain the base points $\alpha$ and $\beta$, respectively, it follows that  $\langle C_1, C_2 \rangle$ and $\langle C_3 \rangle$ are the sequences of all consecutive arcs starting from $C_1$ and $C_3$, respectively. Then 
\[\mathcal{ O}(C_1)= \{c_5^*,c_2^*\}, \qquad \mathcal{U}(C_1)= \{c_4^*\},~~\qquad     \mathcal{O}(C_2)= \{c_3^*\}, \]
\[\quad \mathcal{U}(C_2)= \emptyset, ~\quad \mathcal{O}(C_3)= \{c_4^*\}, \quad~\mathcal{ U}(C_3)= \{c_2^*,c_3^*,c_5^*\}, \quad \text{and}\]
\[ \mathcal{O}_{L}(D_{c_1^*}:C_1) =\mathcal{ O}(C_1)~\cup~ \mathcal{ U}(C_2)= \{c_5^*,c_2^*\},~~\mathcal{U}_{L}(D_{c_1^*}:C_1) = \mathcal{U}(C_1)~\cup~ \mathcal{ O}(C_2)=\{c_3^*,c_4^*\},\]
\[ \mathcal{O}_{L}(D_{c_1^*}:C_3) =\mathcal{ O}(C_3) =\{c_4^*\},\quad \text{and}\quad  \mathcal{U}_{L}(D_{c_1^*}:C_3) =\mathcal{U}(C_3)=\{c_2^*,c_3^*,c_5^*\}.\]
Moreover, the signs of all the crossings in $D_{c_1^*}$ is positive. Hence,
\[\displaystyle \sum_{c\in \mathcal{O}_{L}(D_{c_1^*}: C_1) }\operatorname{sgn}(c)=\operatorname{sgn}(c_5^*)~+~\operatorname{sgn}(c_2^*)=2,\] 
\[\displaystyle \sum_{c\in \mathcal{U}_{L}(D_{c_1^*}: C_1) }\operatorname{sgn}(c)=\operatorname{sgn}(c_4^*)~+~\operatorname{sgn}(c_3^*)=2,\]

\[\displaystyle \sum_{c\in \mathcal{O}_{L}(D_{c_1^*}: C_3) }\operatorname{sgn}(c)=\operatorname{sgn}(c_4^*)=1, \]
\[ \displaystyle \sum_{c\in \mathcal{U}_{L}(D_{c_1^*}: C_3) }\operatorname{sgn}(c)=\operatorname{sgn}(c_2^*)+\operatorname{sgn}(c_3^*)~+~\operatorname{sgn}(c_5^*)=3.\]
Thus, the index values of the crossing $c_1^*$ are given by 
\[\operatorname{Ind}^1(c_1^*)=\displaystyle \sum_{c\in \mathcal{O}_{L}(D_{c_1^*}: C_1) }\operatorname{sgn}(c)- \displaystyle \sum_{c\in \mathcal{U}_{L}(D_{c_1^*}: C_1) }\operatorname{sgn}(c)=2-2=0,\]
\[\operatorname{Ind}^2(c_1^*)=\displaystyle \sum_{c\in \mathcal{O}_{L}(D_{c_1^*}: C_3) }\operatorname{sgn}(c)- \displaystyle \sum_{c\in \mathcal{U}_{L}(D_{c_1^*}: C_3) }\operatorname{sgn}(c)=1-3=-2.\]

\end{example}
\begin{remark}
In the case of a virtual knot diagram $D$, there are no bars in $D$. Therefore, $D$ consists of a single arc. It follows from the definition that
\[\operatorname{Ind}^1(c)=\operatorname{Ind}(c) \quad \text{and} \quad \operatorname{Ind}^2(c)=-\operatorname{Ind}(c),\]
where $\operatorname{Ind}(c)$ is the index of $c$ defined in \cite{cheng2013polynomial}.
\end{remark}
\begin{lemma}\label{prop: RI,RII}
 Let $D$ and $D'$ be two twisted knot diagrams that differ by an RI or  RII move, with $D'$ having more crossings than $D$. Then
\begin{enumerate}
\item[(a)] If $D$ and $D'$ differ by an RI move, and $c'$ is the new crossing in $D'$, then 
\[\operatorname{Ind}^1(c')=\operatorname{Ind}^2(c')=0.\]
\item[(b)] If $D$ and $D'$  differ by an RII move, and $c_1$ and $c_2$ are the new crossings in $D'$, then
\[\operatorname{Ind}^1(c_1)=\operatorname{Ind}^1(c_2)\quad \text{and} \quad \operatorname{Ind}^2(c_1)=\operatorname{Ind}^2(c_2).\]
\end{enumerate}
\end{lemma}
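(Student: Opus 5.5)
Both parts come down to one observation: the index values of a crossing depend only on the linking crossings of the smoothed diagram, together with the parity pattern fixed by the positions of the bars along each component.

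For (a), let $c'$ be the RI crossing in $D'$. Smoothing $c'$ along the orientation produces a two-component diagram $D'_{c'}$. One component is the small loop created by the RI move; it carries no bars, since the move is local and contains none. The other component is a copy of $D$ with $c'$ removed. A linking crossing of $D'_{c'}$ must be a crossing between the two components. The small loop meets nothing else, so there are no linking crossings at all. Hence every set $\mathcal{O}(C_i)$, $\mathcal{U}(C_i)$, $\mathcal{O}(C'_j)$, $\mathcal{U}(C'_j)$ that enters $\mathcal{O}_L$ and $\mathcal{U}_L$ is empty. Whichever of $\alpha,\beta$ lies on the small loop, both sums vanish, so $\operatorname{Ind}^1(c')=\operatorname{Ind}^2(c')=0$.

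For (b), let $c_1,c_2$ be the two RII crossings. The strand $s$ is over at both crossings and the strand $t$ is under at both, and $c_1,c_2$ have opposite signs. Since the move is local, there are no bars between $c_1$ and $c_2$ on either strand, and I will use this throughout. I will split into two cases according to whether the strands are oriented the same way or oppositely.

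\emph{Step 1.} I will smooth $c_1$ and $c_2$ separately. In each case I will identify the two components of $D'_{c_1}$ and $D'_{c_2}$ as subsets of the diagram: each is a closed path that leaves the smoothed crossing along one strand and returns to it. Because $c_1$ and $c_2$ are adjacent along both strands, with nothing in between, the component containing $\alpha$ for $c_1$ and the one containing $\alpha$ for $c_2$ traverse the same crossings and bars of $D'\setminus\{c_1,c_2\}$. The same holds for $\beta$. They differ only in a short bar-free piece near the RII bigon, and in whether the other crossing of the pair is a linking crossing.

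\emph{Step 2.} I will check that the consecutive arc sequences $\langle C_1,\dots,C_n\rangle$ starting from the base points agree, including the parity of each arc. The base point sits just past the crossing along the orientation, and the bars are met in the same order. So any crossing $c\ne c_1,c_2$ lies in $\mathcal{O}_L$ (resp.\ $\mathcal{U}_L$) for $c_1$ exactly when it does for $c_2$, and it contributes the same sign.

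\emph{Step 3, the main obstacle.} The remaining task is the contribution of the partner crossing: $c_2$ as a possible linking crossing of $D'_{c_1}$, and $c_1$ of $D'_{c_2}$. There are two subcases.
\begin{itemize}
\item When the strands are oppositely oriented, the remaining RII crossing joins a component to itself, so it is not linking, and the index values agree immediately.
\item When the strands are parallel, $c_2$ is linking in $D'_{c_1}$. The base point $\alpha$ of $c_1$ lies on the under strand $t$, so on the arc $C_1$ containing $\alpha$ the crossing $c_2$ is met as an under crossing. Before the traversal reaches $c_2$, no bar is met, because $c_2$ is met on that same first arc $C_1$, which has no bar between the two RII crossings. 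So $c_2$ contributes $-\operatorname{sgn}(c_2)$ to $\operatorname{Ind}^1(c_1)$. Symmetrically, $c_1$ contributes $-\operatorname{sgn}(c_1)$ to $\operatorname{Ind}^1(c_2)$.
\end{itemize}
In the parallel subcase these two contributions are unequal, since $\operatorname{sgn}(c_1)=-\operatorname{sgn}(c_2)$. I therefore expect a careful look at the figure to show that the parallel configuration pairs the contribution with another difference in Step 2, for example in which strand leaves the smoothing. Failing that, the claim holds only for the oppositely oriented RII move, which is the orientation in which the two crossings are not linking. I would settle this by drawing both orientations explicitly. The argument for $\operatorname{Ind}^2$ is the same with the roles of over and under exchanged.
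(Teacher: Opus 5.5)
Your part (a) and your treatment of the oppositely oriented RII move are fine and agree with the paper. After smoothing one crossing of an antiparallel RII pair, the other crossing becomes a self-crossing kink, so $D'_{c_1}$ and $D'_{c_2}$ agree up to an RI move, with base points on corresponding components.

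The gap is in the parallel case, where Step~3 mis-tracks the component containing $\alpha$. The base point $\alpha$ of $c_1$ lies on the \emph{incoming} part of the under strand $t$. The oriented smoothing joins incoming-under to outgoing-over. So the $\alpha$-component of $D'_{c_1}$ leaves the smoothing along $s$, runs through the bar-free $s$-segment of the bigon, and meets $c_2$ as an \emph{over} crossing. Thus $c_2\in\mathcal{O}(C_1)\subseteq\mathcal{O}_L(D'_{c_1}:C)$, and it contributes $+\operatorname{sgn}(c_2)$ to $\operatorname{Ind}^1(c_1)$, not $-\operatorname{sgn}(c_2)$. The $t$-segment of the bigon actually belongs to the $\beta$-component of $D'_{c_1}$.

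For $c_2$ the situation is reversed. Here $\alpha$ sits on the $t$-segment of the bigon just before $c_2$, so the $\alpha$-component of $D'_{c_2}$ contains that segment. It meets $c_1$ as an under crossing on the same bar-free arc, giving $c_1\in\mathcal{U}_L(D'_{c_2}:C)$ with contribution $-\operatorname{sgn}(c_1)$. The ``short bar-free piece'' you isolate in Step~1 is exactly the $s$-segment for $c_1$ and the $t$-segment for $c_2$. That is why the partner crossing is seen as over in one smoothing and as under in the other.

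Combining this with your Step~2 gives
\[
\operatorname{Ind}^1(c_1)-\operatorname{sgn}(c_2)=\operatorname{Ind}^1(c_2)+\operatorname{sgn}(c_1).
\]
Since $\operatorname{sgn}(c_1)=-\operatorname{sgn}(c_2)$, this yields $\operatorname{Ind}^1(c_1)=\operatorname{Ind}^1(c_2)$, which is exactly the paper's Case~2 computation. The same bookkeeping works for $\beta$. Its component contains the $t$-segment when smoothing $c_1$ and the $s$-segment when smoothing $c_2$. So $c_2$ contributes $-\operatorname{sgn}(c_2)$ and $c_1$ contributes $+\operatorname{sgn}(c_1)$, which are equal, giving $\operatorname{Ind}^2(c_1)=\operatorname{Ind}^2(c_2)$.

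So the lemma does hold for the parallel move, and your fallback suggestion that it holds only for the antiparallel orientation is incorrect. As submitted, the proposal leaves (b) unproved in precisely the case that needs a genuine computation. The RII step of the invariance theorem relies on that case.
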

\begin{proof}
Consider two twisted knot diagrams $D$ and $D'$ that differ by either an RI or RII move.
\begin{itemize}
    \item[(a)] Let $D'$ be obtained from $D$ by an RI move, and let $c'$ denote the new crossing in $D'$. Depending on the orientation and type of the crossing at $c'$, all possible cases are illustrated in Fig.~\ref{fig:RI}. After smoothing the crossing $c'$ along the orientation, as shown in Fig.~\ref{fig:RI}, the resulting link diagram $D'_{c'}$ contains no linking crossing. Therefore, by definition
\[\operatorname{Ind}^1(c')=\operatorname{Ind}^2(c')=0.\] 

\begin{figure}[htbp]
    \centering
\begin{tcolorbox}[
  colframe=black,
  colback=white,
  boxrule=0.8pt,
  sharp corners
]

\begin{center}
\begin{minipage}{0.45\textwidth}
\begin{tcolorbox}[colframe=black, colback=white]
\centering
\begin{tikzpicture}[scale=0.72]
\draw (-1.5,-0.5) -- (-1.5,0.5);
\draw(-1.15,0)--(-0.65,0);
\draw (-0.8,0.1)--(-0.65,0);
\draw (-0.8,-0.1)--(-0.65,0);
\node at (-1.5,0) [
  fill=black,
  regular polygon,
  regular polygon sides=3,
  rotate=0,
   scale=0.3
] {};
    \draw (-0.5,0.5) -- (0.5,-0.5);
    \draw[white, line width=5pt] (-0.5,-0.5) -- (0.5,0.5);
     \draw (-0.5,-0.5) -- (0.5,0.5);
     \draw (0.5,0.5) .. controls (1,0) .. (0.5,-0.5);
       \node at (0.3,0.3) [
  fill=black,
  regular polygon,
  regular polygon sides=3,
  rotate=70,
   scale=0.3
] {};
\node at (-0.3,0.3) [
  fill=black,
  regular polygon,
  regular polygon sides=3,
  rotate=50,
   scale=0.3
] {};
     \draw (1.5,0) -- (2,0);
     \draw (1.9,0.1) -- (2,0);
     \draw (1.9,-0.1) -- (2,0);
     \node at (-1,0.2) {\tiny $RI$ };
     \draw (2.5,0.5) .. controls (3,0) .. (2.5,-0.5);
     \draw (4,0) circle (15pt);
      \node at (2.86,0) [
  fill=black,
  regular polygon,
  regular polygon sides=3,
  rotate=0,
   scale=0.3
] {};
      \node at (3.5,0) [
  fill=black,
  regular polygon,
  regular polygon sides=3,
  rotate=0,
   scale=0.3
] {};
      \fill (2.8,-0.2) circle (2pt) node[ left ] {\footnotesize $\beta$};
      \fill (3.55,-0.3) circle (2pt) node[right ] {\footnotesize $\alpha$};
      \node at (0,-0.75) {\footnotesize $c'$};
       
\end{tikzpicture}

Case (i)
\end{tcolorbox}
\end{minipage}
\hfill
\begin{minipage}{0.45\textwidth}
\begin{tcolorbox}[colframe=black, colback=white]
\centering
 \begin{tikzpicture}[scale=0.72]
 \draw (-1.5,-0.5) -- (-1.5,0.5);
\draw(-1.15,0)--(-0.65,0);
\draw (-0.8,0.1)--(-0.65,0);
\draw (-0.8,-0.1)--(-0.65,0);
\node at (-1.5,0) [
  fill=black,
  regular polygon,
  regular polygon sides=3,
  rotate=60,
   scale=0.3
] {};
    \draw (-0.5,0.5) -- (0.5,-0.5);
    \draw[white, line width=5pt] (-0.5,-0.5) -- (0.5,0.5);
     \draw (-0.5,-0.5) -- (0.5,0.5);
     \draw (0.5,0.5) .. controls (1,0) .. (0.5,-0.5);
     \node at (0.3,0.3) [
  fill=black,
  regular polygon,
  regular polygon sides=3,
  rotate=28,
   scale=0.3
] {};
\node at (-0.3,0.3) [
  fill=black,
  regular polygon,
  regular polygon sides=3,
  rotate=-10,
   scale=0.3
] {};
     \draw (1.5,0) -- (2,0);
     \draw (1.9,0.1) -- (2,0);
     \draw (1.9,-0.1) -- (2,0);
     \node at (-1,0.2) {\tiny $RI$ };
     \draw (2.5,0.5) .. controls (3,0) .. (2.5,-0.5);
     \draw (4,0) circle (15pt);
      \node at (2.86,0) [
  fill=black,
  regular polygon,
  regular polygon sides=3,
  rotate=-180,
   scale=0.3
] {};
      \node at (3.5,0) [
  fill=black,
  regular polygon,
  regular polygon sides=3,
  rotate=-180,
   scale=0.3
] {};
      \fill (2.8,0.2) circle (2pt) node[left ] {\footnotesize $\alpha$};
      \fill (3.55,0.3) circle (2pt) node[right ] {\footnotesize $\beta$};
      \node at (0,-0.75) {\footnotesize $c'$};
\end{tikzpicture}

Case (ii)
\end{tcolorbox}
\end{minipage}

\vspace{0.5cm}

\begin{minipage}{0.45\textwidth}
\begin{tcolorbox}[colframe=black, colback=white]
\centering
\begin{tikzpicture}[scale=0.72]
\draw (-1.5,-0.5) -- (-1.5,0.5);
\draw(-1.15,0)--(-0.65,0);
\draw (-0.8,0.1)--(-0.65,0);
\draw (-0.8,-0.1)--(-0.65,0);
\node at (-1.5,0) [
  fill=black,
  regular polygon,
  regular polygon sides=3,
  rotate=0,
   scale=0.3
] {};
   \draw (-0.5,-0.5) -- (0.5,0.5);
    \draw[white, line width=5pt] (-0.5,0.5) -- (0.5,-0.5);
    \draw (-0.5,0.5) -- (0.5,-0.5);
     \draw (0.5,0.5) .. controls (1,0) .. (0.5,-0.5);
      \node at (0.3,0.3) [
  fill=black,
  regular polygon,
  regular polygon sides=3,
  rotate=70,
   scale=0.3
] {};
\node at (-0.3,0.3) [
  fill=black,
  regular polygon,
  regular polygon sides=3,
  rotate=50,
   scale=0.3
] {};
    
     \draw (1.5,0) -- (2,0);
     \draw (1.9,0.1) -- (2,0);
     \draw (1.9,-0.1) -- (2,0);
     \node at (-1,0.2) {\tiny $RI$ };
     \draw (2.5,0.5) .. controls (3,0) .. (2.5,-0.5);
     \draw (4,0) circle (15pt);
     \node at (2.86,0) [
  fill=black,
  regular polygon,
  regular polygon sides=3,
  rotate=0,
   scale=0.3
] {};
      \node at (3.5,0) [
  fill=black,
  regular polygon,
  regular polygon sides=3,
  rotate=0,
   scale=0.3
] {};
      \fill (2.8,-0.2) circle (2pt) node[ left] {\footnotesize $\alpha$};
      \fill (3.55,-0.3) circle (2pt) node[above right] {\footnotesize $\beta$};
      \node at (0,-0.75) {\footnotesize $c'$};
     
\end{tikzpicture}

Case (iii)
\end{tcolorbox}
\end{minipage}
\hfill
\begin{minipage}{0.45\textwidth}
\begin{tcolorbox}[colframe=black, colback=white]
\centering
 \begin{tikzpicture}[scale=0.72]
  \draw (-1.5,-0.5) -- (-1.5,0.5);
\draw(-1.15,0)--(-0.65,0);
\draw (-0.8,0.1)--(-0.65,0);
\draw (-0.8,-0.1)--(-0.65,0);
\node at (-1.5,0) [
  fill=black,
  regular polygon,
  regular polygon sides=3,
  rotate=60,
   scale=0.3
] {};
     \draw (-0.5,-0.5) -- (0.5,0.5);
    \draw[white, line width=5pt] (-0.5,0.5) -- (0.5,-0.5);
      \draw (-0.5,0.5) -- (0.5,-0.5);
     \draw (0.5,0.5) .. controls (1,0) .. (0.5,-0.5);
    \node at (0.3,0.3) [
  fill=black,
  regular polygon,
  regular polygon sides=3,
  rotate=28,
   scale=0.3
] {};
\node at (-0.3,0.3) [
  fill=black,
  regular polygon,
  regular polygon sides=3,
  rotate=-10,
   scale=0.3
] {};
     \draw (1.5,0) -- (2,0);
     \draw (1.9,0.1) -- (2,0);
     \draw (1.9,-0.1) -- (2,0);
     \node at (-1,0.2) {\tiny $RI$ };
     \draw (2.5,0.5) .. controls (3,0) .. (2.5,-0.5);
     \draw (4,0) circle (15pt);
       \node at (2.86,0) [
  fill=black,
  regular polygon,
  regular polygon sides=3,
  rotate=-180,
   scale=0.3
] {};
      \node at (3.5,0) [
  fill=black,
  regular polygon,
  regular polygon sides=3,
  rotate=-180,
   scale=0.3
] {};
      \fill (2.8,0.2) circle (2pt) node[left ] {\footnotesize $\beta$};
      \fill (3.55,0.3) circle (2pt) node[right ] {\footnotesize $\alpha$};
      \node at (0,-0.75) {\footnotesize $c'$};
\end{tikzpicture}

Case (iv)
\end{tcolorbox}
\end{minipage}

\end{center}

\end{tcolorbox}
\caption{RI move and smoothing along the orientation.}
    \label{fig:RI}
\end{figure}

\item[(b)] \noindent Now, consider the case of an RII move, and let $c_1$ and $c_2$ be the new crossings in $D'$. Let $D'_{c_1}$ and $D'_{c_2}$ be the diagrams obtained from $D'$ by smoothing the crossings $c_1$ and $c_2$, along the orientation.  We consider two possible configurations of the RII move, presented in Fig.~\ref{fig:RII1} and Fig.~\ref{fig:RII2}. It is evident from Fig.~\ref{fig:RII1} that, in Case 1, both the diagrams $D'_{c_1}$ and $D'_{c_2}$ are equivalent, with the base points $\alpha$ and $\beta$ preserved.  Therefore,
\[\operatorname{Ind}^1(c_1)=\operatorname{Ind}^1(c_2)\quad \text{and} \quad \operatorname{Ind}^2(c_1)=\operatorname{Ind}^2(c_2).\]


\begin{figure}[htbp]
    \centering
    \begin{tikzpicture}
    \draw (-1,1) -- (-1,-1);
    \draw (0,1) -- (0,-1); 
    \fill (-1.15,0) -- (-0.85,0) -- (-1,0.3) -- cycle;
    \fill (-0.15,0) -- (0.15,0) -- (0,-0.3) -- cycle;
    \draw (1,0) --(2,0);
    \draw (1.8,0.2) --(2,0);
    \draw (1.8,-0.2) --(2,0);
    \node at (1.4,0.3) {RII};
    \node at (-0.5,-1.5) {D};

     \draw (3.8,1) to[out=180 , in=180] (3.8,-1);
    \draw[white, line width=5pt ] (3,1) to[out=0 , in=0 ] (3,-1);
    \draw (3,1) to[out=0 , in=0 ] (3,-1);
      \fill (3.13,0) -- (3.33,0) -- (3.23,-0.3) -- cycle;
    \fill (3.47,0) -- (3.67,0) -- (3.57,0.3) -- cycle;
    \node at (3.3,-1.5) {$D'$};
    \draw (4.1,0.1) -- (4.7,0.4);
    \draw (4.1,-0.1) -- (4.7,-0.4);
    \draw (4.53,0.41) -- (4.7,0.4);
     \draw (4.6,0.27) -- (4.7,0.4);
     \draw (4.55,-0.39) -- (4.7,-0.4);
     \draw (4.6,-0.28) -- (4.7,-0.4);
     \fill (6,1.4) circle (1pt);
     \fill (6,1.8) circle (1pt);
      \fill (6,-1.4) circle (1pt);
     \fill (6,-1.8) circle (1pt);
      \fill (7.8,1.65) circle (1pt);
     \fill (7.75,1.85) circle (1pt);\
      \fill (7.8,-1.65) circle (1pt);
     \fill (7.75,-1.85) circle (1pt);
      \node at (3.35,1.1) {\small $c_2$};
      \node at (3.4,-1.1) {\small $c_1$};
     \draw (5.8,1.2) -- (6.5,0.5);
     \draw[white, line width=5pt] (5.5,0.5) -- (6.5,1.5);
     \draw (5.5,0.5) -- (6.2,1.2);
      \draw (6.42,2) ..controls (6,1.7) .. (5.5,2);
    \draw (6.2,1.2) .. controls (6,1.5).. (5.8,1.2);
    \node at (5.65,0.65) [
  fill=black,
  regular polygon,
  regular polygon sides=3,
  rotate=75,
   scale=0.3
] {};
 \node at (5.8,1.85) [
  fill=black,
  regular polygon,
  regular polygon sides=3,
  rotate=65,
   scale=0.3
] {};
    \node at (6,1.7) {\tiny $\alpha$};
    \node at (6,1.52) {\tiny $\beta$};
    \draw (6.5,1.25) -- (7,1.5);
    \draw (6.9,1.55) -- (7,1.5);
    \draw (6.96,1.4) -- (7,1.5);
    \node at (6.7,1.55) {\tiny RI};
      \draw (7.5,2) to[out=-90, in= -90] (8,2);
      \draw (7.5,1.5) to[out=90, in= 90] (8,1.5);
       \node at (7.65,1.62) [
  fill=black,
  regular polygon,
  regular polygon sides=3,
  rotate=60,
   scale=0.25
] {};
 \node at (7.6,1.865) [
  fill=black,
  regular polygon,
  regular polygon sides=3,
  rotate=55,
   scale=0.25
] {};
       \node at (7.74,1.99) {\tiny $\alpha$};
       \node at (7.74,1.51) {\tiny $\beta$};
       \node at (6,0.2) {\tiny $D'_{c_2}$};

 \draw (5.8,-1.2) -- (6.5,-0.5);
     \draw[white, line width=5pt] (5.5,-0.5) -- (6.5,-1.5);
     \draw (5.5,-0.5) -- (6.2,-1.2);
      \draw (6.42,-2) ..controls (6,-1.7) .. (5.5,-2);
    \draw (6.2,-1.2) .. controls (6,-1.5).. (5.8,-1.2);
    \node at (5.65,-0.65) [
  fill=black,
  regular polygon,
  regular polygon sides=3,
  rotate=50,
   scale=0.3
] {};
 \node at (5.8,-1.83) [
  fill=black,
  regular polygon,
  regular polygon sides=3,
  rotate=55,
   scale=0.3
] {};
    \node at (6,-1.7) {\tiny $\beta$};
    \node at (6,-1.5) {\tiny $\alpha$};
    \draw (6.5,-1.25) -- (7,-1.5);
    \draw (6.9,-1.55) -- (7,-1.5);
    \draw (6.96,-1.4) -- (7,-1.5);
    \node at (6.7,-1.55) {\tiny RI};
      \draw (7.5,-2) to[out=90, in= 90] (8,-2);
      \draw (7.5,-1.5) to[out=-90, in= -90] (8,-1.5);
       \node at (7.65,-1.625) [
  fill=black,
  regular polygon,
  regular polygon sides=3,
  rotate=190,
   scale=0.25
] {};
 \node at (7.6,-1.87) [
  fill=black,
  regular polygon,
  regular polygon sides=3,
  rotate=50,
   scale=0.25
] {};
       \node at (7.74,-1.99) {\tiny $\beta$};
       \node at (7.74,-1.51) {\tiny $\alpha$};
 \node at (6,-2.4) {\tiny $D'_{c_1}$};
\end{tikzpicture}
    \caption{RII-move and smoothing along the orientation. Case (1).}
    \label{fig:RII1}
\end{figure}
\begin{figure}[htbp]
    \centering
    \begin{tikzpicture}
    \draw (-1,1) -- (-1,-1);
    \draw (0,1) -- (0,-1); 
    \fill (-1.15,0) -- (-0.85,0) -- (-1,0.3) -- cycle;
    \fill (-0.15,0) -- (0.15,0) -- (0,0.3) -- cycle;
    \draw (1,0) --(2,0);
    \draw (1.8,0.2) --(2,0);
    \draw (1.8,-0.2) --(2,0);
    \node at (1.4,0.3) {RII};
    \node at (-0.5,-1.5) {D};

     \draw (3.8,1) to[out=180 , in=180] (3.8,-1);
    \draw[white, line width=5pt ] (3,1) to[out=0 , in=0 ] (3,-1);
    \draw (3,1) to[out=0 , in=0 ] (3,-1);
    \fill (3.13,0) -- (3.33,0) -- (3.23,0.3) -- cycle;
    \fill (3.47,0) -- (3.67,0) -- (3.57,0.3) -- cycle;
    \node at (3.4,1.1) {$c_2$};
    \node at (3.4,-1.1) {$c_1$};
    \node at (3.3,-1.5) {$D'$};
    \draw (4.1,0.1) -- (4.7,0.4);
    \draw (4.1,-0.1) -- (4.7,-0.4);
    \draw (4.53,0.41) -- (4.7,0.4);
     \draw (4.6,0.27) -- (4.7,0.4);
     \draw (4.55,-0.39) -- (4.7,-0.4);
     \draw (4.6,-0.28) -- (4.7,-0.4);

     \draw (5.8,1.2) -- (6.5,0.5);
     \draw[white, line width=5pt] (5.5,0.5) -- (6.5,1.5);
     \draw (5.5,0.5) -- (6.2,1.2);
      \draw (5.8,1.2) ..controls (5.4,1.4) and(6.2,1.7) .. (5.5,2);
    \draw (6.2,1.2) .. controls (6.6,1.5) and (5.7,1.6).. (6.42,2);
    \fill (6.05,1.73) -- (6.25,1.73) -- (6.15,1.83) -- cycle;
    \fill (5.7,1.73) -- (5.9,1.73) -- (5.8,1.83) -- cycle;
 \draw (5.8,-1.2) -- (6.5,-0.5);
    \draw[white, line width=5pt] (5.5,-0.5) -- (6.5,-1.5);
    \draw (5.5,-0.5) -- (6.2,-1.2);
     \draw (5.8,-1.2) ..controls (5.4,-1.4) and(6.2,-1.7) .. (5.5,-2);
    \draw (6.2,-1.2) .. controls (6.6,-1.5) and (5.7,-1.6).. (6.42,-2);
    \fill (6.05,-1.73) -- (6.25,-1.73) -- (6.15,-1.63) -- cycle;
    \fill (5.7,-1.73) -- (5.9,-1.73) -- (5.8,-1.63) -- cycle;
    \fill (5.73,1.5) circle (1pt);
    \fill (6.24,1.5) circle (1pt);
    \node at(5.6,1.5) {\tiny $\alpha$};
    \node at(6.38,1.5) {\tiny $\beta$};
    \node at(6,1.2) {\tiny $c_1$};
    \node at(6,0.5) {\tiny $D'_{c_2}$};
    \node at(6.55,0.7) {\small $\textbf{C}$};
    \fill (5.74,-1.5) circle (1pt);
    \fill (6.25,-1.5) circle (1pt);
    \node at(5.6,-1.5) {\tiny $\beta$};
    \node at(6.38,-1.5) {\tiny $\alpha$};
    \node at(6.025,-1.2) {\tiny $c_2$};
    \node at(6,-2.2) {\tiny $D'_{c_1}$};
    \node at(6.5,-1.8) {\small $\textbf{C}$};
\end{tikzpicture}
    \caption{RII-move and smoothing along the orientation. Case (2).}
    \label{fig:RII2}
\end{figure}
\noindent In Case 2, the diagrams $D'_{c_1}$ and $D'_{c_2}$  presented in Fig.~\ref{fig:RII2} differ at only a single crossing, although  the base points remain on the same components. Let $C$ denote the arc containing the base point $\alpha$, as shown in Fig.~\ref{fig:RII2}. It is evident that  
$c_1\in \mathcal{U}_{L}(D'_{c_2}:C)$ and $c_2\in \mathcal{O}_{L}(D'_{c_1}:C)$. Therefore,
\begin{equation}\label{pEq:2}
\operatorname{Ind}^1(c_1) - \operatorname{sgn}(c_2)=\operatorname{Ind}^1(c_2)+\operatorname{sgn}(c_1). 
\end{equation} 
Since $\operatorname{sgn}(c_1)=-\operatorname{sgn}(c_2)$, it follows from  Eq.~\ref{pEq:2} that
\[\operatorname{Ind}^1(c_1) +\operatorname{sgn}(c_1)=\operatorname{Ind}^1(c_2)+\operatorname{sgn}(c_1),  \] 
and, hence $\operatorname{Ind}^1(c_1)=\operatorname{Ind}^1(c_2)$. With similar arguments, we can prove that  $\operatorname{Ind}^2(c_1)=\operatorname{Ind}^2(c_2)$. Hence, the proof of the desired result.
\end{itemize}

\end{proof}
\begin{lemma}\label{prop: RIII}
 Let $D$  be a  twisted link diagram, and let $\langle C=C_1,C_2,\ldots, C_n \rangle$ denote the sequence of all consecutive arcs in $D$ starting from arc $C$. If $D'$ is any twisted link diagram obtained from $D$ by applying a finite number of crossing change operations, then 
 \[\displaystyle \sum_{c\in \mathcal{O}_{L}(D:C) }\operatorname{sgn}(c)- \displaystyle \sum_{c\in \mathcal{U}_{L}(D:C) }\operatorname{sgn}(c)=\displaystyle \sum_{c\in \mathcal{O}_{L}(D':C) }\operatorname{sgn}(c)- \displaystyle \sum_{c\in \mathcal{U}_{L}(D':C) }\operatorname{sgn}(c).\]
 \end{lemma}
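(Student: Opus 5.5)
By induction on the number of crossing changes, it suffices to treat a single crossing change at one crossing $c_0$ of $D$. A crossing change is purely local: it swaps over and under strands at $c_0$ and does not move any bars. Hence the arcs of $D'$ coincide with those of $D$, and so does the sequence $\langle C=C_1,\ldots,C_n\rangle$ of consecutive arcs starting from $C$. Write $\Phi(D)$ for the quantity $\sum_{\mathcal{O}_L(D:C)}\operatorname{sgn}-\sum_{\mathcal{U}_L(D:C)}\operatorname{sgn}$. Every crossing other than $c_0$ keeps its sign and its over/under membership with respect to each $C_i$. So $\Phi(D)-\Phi(D')$ is exactly the contribution of $c_0$ in $D$ minus its contribution in $D'$.

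If $c_0$ is not a linking crossing of the component containing $C$ (both strands lie on that component, or neither does), then $c_0$ lies in no $\mathcal{O}(C_i)$ or $\mathcal{U}(C_i)$. It then contributes nothing to either side, and the equality is immediate.

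Now suppose $c_0$ is a linking crossing, and let $C_i$ be the unique arc of the sequence passing through $c_0$. Define $\epsilon_i=+1$ if $i$ is odd and $\epsilon_i=-1$ if $i$ is even. Reading off the definitions of $\mathcal{O}_L$ and $\mathcal{U}_L$, the contribution of $c_0$ to $\Phi(D)$ is $\epsilon_i\operatorname{sgn}_D(c_0)$ if $c_0\in\mathcal{O}(C_i)$, and $-\epsilon_i\operatorname{sgn}_D(c_0)$ if $c_0\in\mathcal{U}(C_i)$. In other words it equals $\epsilon_i\,\delta\,\operatorname{sgn}_D(c_0)$, where $\delta=+1$ if $C_i$ passes over $c_0$ and $\delta=-1$ if it passes under.

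A crossing change flips $\delta$, because $C_i$ now passes under instead of over or vice versa. It also flips the sign: $\operatorname{sgn}_{D'}(c_0)=-\operatorname{sgn}_D(c_0)$, since the orientations of the two strands are unchanged while over and under are exchanged (Fig.~\ref{fig2}). The product $\delta\operatorname{sgn}(c_0)$ is therefore unchanged, and so is $\epsilon_i$. Hence the contribution of $c_0$ is the same in $D$ and $D'$, which gives $\Phi(D)=\Phi(D')$.

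The one point needing care is the convention for a crossing's sign in a twisted diagram. The sign depends only on the local orientations of the two strands, and bars never sit at a crossing, so it does flip under a crossing change exactly as in the classical case. I would justify this from the local picture in Fig.~\ref{fig2}. I would also note that a linking crossing meets the component of $C$ exactly once, so $c_0$ is counted in exactly one of $\mathcal{O}(C_i)$, $\mathcal{U}(C_i)$, and the index $i$ is well defined.
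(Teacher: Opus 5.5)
Your proof is correct and is essentially the paper's argument. The paper handles all switched crossings at once rather than inducting on their number, but the key point is the same: switching a linking crossing moves it between $\mathcal{O}_L(D:C)$ and $\mathcal{U}_L(D:C)$ while negating its sign, so its net contribution $\epsilon_i\,\delta\,\operatorname{sgn}(c_0)$ is unchanged, and switched self crossings contribute nothing.
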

\begin{proof} 
Consider a twisted link diagram $D$ with $m$ classical crossings. Let $\langle C=C_1,C_2,\ldots, C_n \rangle$ denote the sequence of all consecutive arcs in $D$, starting from an arc $C$. Let $D'$ be the diagram obtained from $D$ by switching $k\leq m$ crossings, among which $l$ are linking crossings, denoted by $c_1,c_2,\ldots,c_l$. Let  $c'_i$ represent the crossing in $D'$ corresponding to the crossing $c_i$ in $D$. Since $c'_i$ is obtained by switching $c_i$, we have $\operatorname{sgn}(c'_i)=-\operatorname{sgn}(c_i)$. Moreover, if 
$$  c_i\in \mathcal{O}_{L}(D:C) ~(\text{respectively},~ \mathcal{U}_{L}(D:C) ) , \text{ then } $$ 
$$ c'_i\in \mathcal{U}_{L}(D':C)~ (\text{respectively},~ \mathcal{O}_{L}(D':C)).
$$
Therefore,
 \[\displaystyle \sum_{c\in \mathcal{O}_{L}(D:C) }\operatorname{sgn}(c)- \displaystyle \sum_{c\in \mathcal{U}_{L}(D:C) }\operatorname{sgn}(c)
 =\displaystyle \sum_{c\in \mathcal{O}_{L}(D':C) }\operatorname{sgn}(c)- \displaystyle \sum_{c\in \mathcal{U}_{L}(D':C) }\operatorname{sgn}(c).\]
 Hence, the desired result follows.
\end{proof}

\noindent We now proceed to define the $\mathcal{B}$-Index polynomial.
Let $c$ be a classical crossing of an oriented twisted knot diagram $D$. Let  $D_c$ be the oriented two component link diagram obtained from $D$ by smoothing at $c$
along the orientation, as shown in Fig.~\ref{fig:Fig1}(b). The orientation of $D_c$ is induced by the orientation of the smoothing. 

\begin{definition} 
For an oriented  twisted knot diagram  $D$, we define $\mathcal{B}$-Index polynomial of $D$ as follows
\[\mathcal{B}_D(t)=\displaystyle \sum_{c\in \mathcal{C}(D)} \operatorname{sgn}(c)(t^{\operatorname{Ind^1}(c)+\operatorname{Ind^2}(c)} -1),\]
where $\mathcal{C}(D)$ denotes the set of all classical crossings of $D$.
\end{definition}

\begin{theorem} \label{thm-main} Given a diagram $D$ of a twisted knot $K$, the $\mathcal{B}$-Index polynomial $\mathcal{B}_{D}(t)$ is an invariant for $K$. 
\end{theorem}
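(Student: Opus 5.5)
We will show that $\mathcal{B}_D(t)$ is unchanged under each of the generating moves: the classical moves RI, RII, RIII, the virtual moves VR1--VR3 and SV, and the twisted moves TI, TII, TIII. Since every classical crossing is weighted by $\operatorname{sgn}(c)$ and by the exponent $\operatorname{Ind}^1(c)+\operatorname{Ind}^2(c)$, it suffices to check two things. First, crossings removed or created by a move contribute zero in total. Second, for every crossing $c$ that persists, the sign is unchanged and $\operatorname{Ind}^1(c)+\operatorname{Ind}^2(c)$ is unchanged.

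\textbf{Classical moves that create crossings.} For RI, Lemma~\ref{prop: RI,RII}(a) gives $\operatorname{Ind}^1(c')=\operatorname{Ind}^2(c')=0$. Hence the new term is $\operatorname{sgn}(c')(t^0-1)=0$. For RII, Lemma~\ref{prop: RI,RII}(b) shows that the two new crossings have equal index pairs and opposite signs, so their terms cancel.

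\textbf{Crossings away from the move.} For any other crossing $c$, we compare the smoothed link diagrams $D_c$ and $D'_c$. These differ by the same local move, and the local move does not touch the bars or the base points $\alpha,\beta$. Therefore the arc sequences $\langle C_1,\dots,C_n\rangle$ and $\langle C'_1,\dots,C'_m\rangle$ correspond, with the same parity pattern.

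For RI, the new crossing is a self-crossing of one component, not a linking crossing, so the sums are unchanged.

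For RII, either both new crossings are self-crossings, or both are linking crossings lying on the same pair of arcs. In the linking case, one of them is counted in $\mathcal{O}_L$ and the other in $\mathcal{U}_L$ only if the strand passes over at one and under at the other. In fact the same strand is over at both crossings, so both lie in the same set. Since their signs are opposite, they cancel.

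For RIII, the three crossings involved keep their signs and their over/under roles on each strand. Among them, the linking crossings of $D_c$ stay on the same arcs, because no bar lies inside the move disk; we may first use TII/TIII-type isotopies to push bars out of the disk. So the sums are unchanged.

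\textbf{Crossings inside an RIII move.} We must also treat each of the three crossings $c$ inside the RIII disk. After smoothing $c$, the other two crossings of the move and the remaining diagram differ between $D_c$ and $D'_c$ by a local move of RII type together with a crossing-sliding past. We argue as in the proof of Lemma~\ref{prop: RI,RII}(b): the linking sets change by exchanging one crossing between $\mathcal{O}_L$ and $\mathcal{U}_L$, which preserves the signed difference. Where needed, we invoke Lemma~\ref{prop: RIII} to reduce to a convenient choice of crossing information in the RIII configuration.

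\textbf{Virtual and twisted moves.} Virtual moves introduce no classical crossings and do not change arcs or bars, so every index is preserved. TII removes or adds a pair of adjacent bars. This merges two arcs into one (or splits one into two) by deleting a bar-free arc. Since the alternation $\mathcal{O},\mathcal{U},\mathcal{O},\dots$ flips twice across the pair, the classification of each linking crossing is unchanged. TI moves a bar through a virtual crossing, and the virtual crossing contributes nothing, so the arc partition of classical crossings is unchanged.

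\textbf{Main obstacle: TIII.} TIII replaces a classical crossing with bars on all four ends by the crossing-switched one. We expect this to be the hardest step. For the central crossing, smoothing with bars on all ends shifts each component's arc sequence by one arc. This swaps the roles $\mathcal{O}\leftrightarrow\mathcal{U}$ on each component, and the base points $\alpha$ and $\beta$ are interchanged because over and under are exchanged. The claim to verify is that $\operatorname{Ind}^1+\operatorname{Ind}^2$ is preserved and the sign is preserved. Indeed, the orientations of both strands reverse relative to the bars, so the sign stays the same.

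For the other crossings $c$, the central crossing changes over/under and sits on arcs whose parity flips. By the argument of Lemma~\ref{prop: RIII}, a switched crossing on a parity-flipped arc contributes identically. The arc-parity bookkeeping here, including the case where a component of $D_c$ has at most one bar, is where we will be most careful.
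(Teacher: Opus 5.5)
Your move-by-move plan matches the paper's. However, the TII step is not merely unproved: it is false for the definition as given.

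The alternation $\mathcal{O},\mathcal{U},\mathcal{O},\dots$ is anchored at the arc $C_1$ that contains the base point. On a component of $D_c$ with an odd number of bars it does not close up, since $C_1$ and $C_n$ are both of $\mathcal{O}$-type. Suppose a cancelling pair $x,y$ is inserted between the last bar $p$ before $\alpha$ and $\alpha$ itself. Then $y$ becomes the initial bar of $C_1$, and the segment from $p$ to $x$, formerly part of $C_1$, becomes an arc in an even position. Every linking crossing on that segment therefore switches between $\mathcal{O}_L$ and $\mathcal{U}_L$ with its sign unchanged. Your remark that ``the alternation flips twice across the pair'' only covers pairs inserted away from the base arc.

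Here is a concrete instance. Take a two-crossing virtual diagram with Gauss word $a^{o}\,b^{u}\,a^{u}\,b^{o}$, with one bar on the edge from $a^{o}$ to $b^{u}$. Every component of every smoothing carries at most one bar, hence is a single arc. So $\operatorname{Ind}^2=-\operatorname{Ind}^1$ at both crossings, and $\mathcal{B}=0$. Now add two adjacent bars on the edge from $b^{u}$ to $a^{u}$ by a TII move.
\begin{itemize}
\item In $D_a$ the $\alpha$-component now has three bars, and $b$ sits on $C_2$ as an undercrossing. Hence $\operatorname{Ind}^1(a)=\operatorname{sgn}(b)$ instead of $-\operatorname{sgn}(b)$.
\item The $\beta$-component of $D_a$ is bar-free, so $\operatorname{Ind}^2(a)=\operatorname{sgn}(b)$ is unchanged.
\item The two indices of $b$ still sum to $0$.
\end{itemize}
Thus $\mathcal{B}$ becomes $\operatorname{sgn}(a)\bigl(t^{2\operatorname{sgn}(b)}-1\bigr)\neq 0$ after a single TII move. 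No argument can make this step go through for the polynomial as defined.

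The same base-point effect breaks your TIII step for the central crossing, which you leave as ``the claim to verify.'' In $D_{\bar c}$ the arc containing $\alpha$ is the crossing-free arc between the two TIII bars flanking the smoothing. When these bars disappear in $D'_{\bar c'}$, every other arc of that component moves by one position. If that component of $D_{\bar c}$ carries an even number of bars, this gives $\operatorname{Ind}^2(\bar c')=-\operatorname{Ind}^1(\bar c)$, and likewise on the other side. Your own phrase ``swaps the roles $\mathcal{O}\leftrightarrow\mathcal{U}$'' describes exactly this. So the exponent $\operatorname{Ind}^1+\operatorname{Ind}^2$ is negated rather than preserved.

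The paper does not supply the missing ingredient either. Its proof never examines TII; it checks only RI, RII, RIII, SV and TIII. Its TIII argument derives $\operatorname{Ind}^1(\bar c)=\operatorname{Ind}^2(\bar c')$ from the fact that $D_{\bar c}$ and $D'_{\bar c'}$ are related by TII and VII moves with base points exchanged. That presupposes precisely the TII-invariance of the linking sums that fails above. So neither your outline nor the paper's proof can be completed without changing how arcs and parities are assigned near the base point.

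A smaller point concerns your RIII paragraph. Moving a crossing between $\mathcal{O}_L$ and $\mathcal{U}_L$ preserves the signed difference only if its sign also flips. The paper's argument instead is that $D_{c_i}$ and $D'_{c'_i}$ agree up to crossing changes, after which Lemma~\ref{prop: RIII} applies.
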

\begin{proof} 
Let $K$ be a twisted knot, and let $D$ be a diagram representing $K$. Note that to show that $\mathcal{B}_D(t)$ is an invariant,  it is sufficient to examine the behavior of $\mathcal{B}_{D}(t)$ under the classical Reidemeister moves RI, RII, RIII, the semi virtual move SV, and the twisted TIII move.

\smallskip 

\noindent Let $D'$ be a diagram obtained from $D$ by applying one of these moves. We assume that, in the case of RI and RII the number of classical crossings in $D'$ is greater than the number of classical crossings in $D$. We now consider the following cases. 

\smallskip 

\noindent \underline{$D'$ is obtained from $D$ by RI--move:} 
Let $D'$ be a diagram obtained from $D$ by applying an RI move, and let $\overline{c}$ be the new crossing introduced in $D'$.  Let $c'$ denote the crossing in $D'$ corresponding to a crossing $c$ in $D$. It is easy to observe that  $\overline{c}$ is a self crossing in the twisted link diagram $D'_{c'}$. Consequently, the diagrams $D_{c}$ and $D'_{c'}$  also differ by an RI move, and hence the index values of $c$ and $c'$ remains the same.\\
By Lemma~\ref{prop: RI,RII}(a), we have $\operatorname{Ind}^1(\overline{c})=\operatorname{Ind}^2(\overline{c})=0,$ and hence
 \begin{align*}
\mathcal{B}_{D'}(t) &= \sum_{c\in \mathcal{C}(D')} \operatorname{sgn}(c)(t^{\operatorname{Ind}^1(c)+\operatorname{Ind}^2(c)}-1)\\
&=\sum_{c\in \mathcal{C}(D') \setminus  \{\overline{c}\}} \operatorname{sgn}(c)(t^{\operatorname{Ind}^1(c)+\operatorname{Ind}^2(c)} -1)+ \operatorname{sgn}(\overline{c})(t^{\operatorname{Ind}^1(\overline{c})+\operatorname{Ind}^2(\overline{c})}-1)\\
&=\sum_{c\in \mathcal{C}(D)} \operatorname{sgn}(c)(t^{\operatorname{Ind}^1(c)+\operatorname{Ind}^2(c)} -1)+ \operatorname{sgn}(\overline{c})(t^{\operatorname{Ind}^1(\overline{c})+\operatorname{Ind}^2(\overline{c})}-1)\\
&=\sum_{c\in \mathcal{C}(D)} \operatorname{sgn}(c)(t^{\operatorname{Ind}^1(c)+\operatorname{Ind}^2(c)} -1)+ \operatorname{sgn}(\overline{c})(t^{0}-1)=\mathcal{B}_{D}(t).
\end{align*}

\smallskip 

\noindent \underline{$D'$ is obtained from $D$ by RII--move:}
Let $D'$ be a diagram obtained from $D$ by an RII move, and let $c_1$ and $c_2$ be new crossings in $D'$. Then, 
$\operatorname{sgn}(c_1)=-\operatorname{sgn}(c_2)$, and by Lemma~\ref{prop: RI,RII}(b) 
\[\operatorname{Ind}^i(c_1) = \operatorname{Ind}^i(c_2),\quad \text{for~~} i=1,2.\]
 Let $c'$ be the crossing in $D'$ corresponding to a crossing $c$ in $D$. It is easy to observe that $D_c$ is equivalent to $D'_{c'}$ by an RII move. Moreover, both $c_1$ and $c_2$ are either self crossings or linking crossings in the link diagram $D'_{c'}$. 
\begin{enumerate}
\item If $c_1$ and $c_2$ are self crossings, then the index values of $c'$ are same as the index values of $c$. 
\item If $c_1$ and $c_2$ are linking crossings, then both  are either over linking  or under linking crossings.
 \end{enumerate} 
  Since $\operatorname{sgn}(c_1)=-\operatorname{sgn}(c_2)$, it follows that
 \[\operatorname{Ind}^i(c') = \operatorname{Ind}^i(c)+\epsilon \operatorname{sgn}(c_1)+\epsilon\operatorname{sgn}(c_2)=\operatorname{Ind}^i(c),\]
where $\epsilon=1$ or $\epsilon=-1$, depending on whether both crossings $c_1$ and $c_2$ are over linking crossings or under linking  crossings, respectively.\\

\noindent \underline{$D'$ is obtained from $D$ by RIII--move:}
Let $D'$ be a diagram obtained from $D$ by an RIII move. Let $c_1$, $c_2$, and $c_3$ be the crossings of $D$ involved in the RIII move, and let $c'_1$, $c'_2$, and $c'_3$ be the corresponding crossings in $D'$ after the RIII move, as shown in Fig.~\ref{figR3}.
It is easy to observe that the index values of any crossing $c' \neq c'_i$ in $D'$ remain the same as those of the corresponding crossing $c \neq c_i$ in $D$. Moreover, $\operatorname{sgn}(c)=\operatorname{sgn}(c')$. \\
\begin{figure}[htbp]
    \centering
    \begin{tikzpicture}
 \draw (-1,1) -- (1,-1);
    \draw[white, line width=6pt] (-1,-1) -- (1,1);
    \draw (-1,-1) -- (1,1);
    \draw[white, line width=6pt] (-2,0.5) -- (1.2,0.5);
    \draw (-1.2,0.5) -- (1.2,0.5);
       \node at (-1,1) [
  fill=black,
  regular polygon,
  regular polygon sides=3,
  rotate=45,
   scale=0.3
] {};
 \node at (1,1) [
  fill=black,
  regular polygon,
  regular polygon sides=3,
  rotate=70,
   scale=0.3
] {};
 \node at (1,0.5) [
  fill=black,
  regular polygon,
  regular polygon sides=3,
  rotate=30,
   scale=0.3
] {};
    \node at (-0.6,0.3) {\tiny $c_1$};
    \node at (0.6,0.3) {\tiny $c_2$};
    \node at (0,-0.3) {\tiny $c_3$};
    \node at (0,-1.5) {\footnotesize $D$};
    \draw (1.7,0) -- (2.3,0);
    \draw (2.3,0) -- (2.15,0.2);
    \draw (2.3,0) -- (2.15,-0.2);
    \draw (1.7,0) -- (1.85,0.2);
    \draw (1.7,0) -- (1.85,-0.2);
    \draw (3,1) -- (5,-1);
    \draw[white, line width=6pt] (3,-1) -- (5,1);
    \draw (3,-1) -- (5,1);
    \draw[white, line width=6pt] (2,-0.5) -- (5.2,-0.5);
    \draw (2.8,-0.5) -- (5.2,-0.5);
    \node at (3,1) [
  fill=black,
  regular polygon,
  regular polygon sides=3,
  rotate=45,
   scale=0.3
] {};
 \node at (5,1) [
  fill=black,
  regular polygon,
  regular polygon sides=3,
  rotate=70,
   scale=0.3
] {};
 \node at (5,-0.5) [
  fill=black,
  regular polygon,
  regular polygon sides=3,
  rotate=30,
   scale=0.3
] {};
    \node at (3.4,-0.3) {\tiny $c'_2$};
    \node at (4.6,-0.3) {\tiny $c'_1$};
    \node at (4,0.3) {\tiny $c'_3$};
     \node at (4,-1.5) {\footnotesize $D'$};
    \end{tikzpicture}
    \caption{RIII move. }
    \label{figR3}
\end{figure}

\noindent Since $\operatorname{sgn}(c_i)=\operatorname{sgn}(c'_i)$ for each $i=1,2 \text{ and } 3$, it is sufficient to examine  the behavior of index values of $c_i$ and $c'_i$.
Based on the orientation of each arc, there are eight possible cases. Here, we study one of them, as shown in Fig.~\ref{figR3}. \\
 
\noindent Let $D_{c_i}$ and $D'_{c'_i}$ be the twisted link diagrams obtained from $D$ and $D'$, respectively, by smoothing the crossings $c_i$ and $c'_i$ along the orientation. As depicted in Fig.~\ref{fig:R31} and Fig.~\ref{fig:R32}, $D_{c_i}$ and  $D'_{c'_i}$ either  represent identical diagrams or differ by a finite number of crossing changes. Moreover,  the base points on $D_{c_i}$ and  $D'_{c'_i}$ are preserved by the same components and $\operatorname{sgn} (c'_i) = \operatorname{sgn}(c_i)$. Thus, it follows from  Lemma~\ref{prop: RIII} that
\[\operatorname{Ind}^j (c'_{i}) = \operatorname{Ind}^j (c_i),\quad \text{for~}  i=1,2,3, \text{~and~} j=1,2. \]
Hence, we have  
$\mathcal{B}_{D'}(t)=\mathcal{B}_{D}(t)$.

\noindent For the remaining types of RIII-moves, the result follows by analogous arguments. \\
\begin{figure}[htbp]
    \centering
   \begin{tikzpicture}
    \draw (-1,1) -- (1,-1);
    \draw[white, line width=6pt] (-1,-1) -- (1,1);
    \draw (-1,-1) -- (1,1);
    \draw[white, line width=6pt] (-2,0.5) -- (1.2,0.5);
    \draw (-1.2,0.5) -- (1.2,0.5);
    \node at (-1,1) [
  fill=black,
  regular polygon,
  regular polygon sides=3,
  rotate=45,
   scale=0.3
] {};
 \node at (1,1) [
  fill=black,
  regular polygon,
  regular polygon sides=3,
  rotate=72,
   scale=0.3
] {};
 \node at (1,0.5) [
  fill=black,
  regular polygon,
  regular polygon sides=3,
  rotate=30,
   scale=0.3
] {}; 
    \node at (-0.6,0.3) {\tiny $c_1$};
    \node at (0.6,0.3) {\tiny $c_2$};
    \node at (0,-0.3) {\tiny $c_3$};
    \draw (1.4,0) -- (2,0);
    \draw (2,0) -- (1.8,0.2);
    \draw (2,0) -- (1.8,-0.2);
    \draw (2,0.5) .. controls (2.5,0.5) .. (2.7,1);
     \node at (2.61,0.8) [
  fill=black,
  regular polygon,
  regular polygon sides=3,
  rotate=-20,
   scale=0.3
] {};
    \draw (2.8,-1) -- (3.7,1);
    \draw[white , line width=6pt] (3,0.2) .. controls (2.7,0.2) .. (4,0.6);
    \draw (3.7,-1) .. controls (2.7,0.2) .. (4,0.6);
    \draw[white , line width=6pt] (2.8,-1) -- (3.3,0);
    \draw (2.8,-1) -- (3.265,0.05);
     \node at (3.64,0.85) [
  fill=black,
  regular polygon,
  regular polygon sides=3,
  rotate=-22,
   scale=0.3
] {};
 \node at (3.9,0.57) [
  fill=black,
  regular polygon,
  regular polygon sides=3,
  rotate=45,
   scale=0.3
] {};
     \fill (2.5,0.6) circle (2pt) node[below right] {\footnotesize $\beta$};
      \fill (3,0.1) circle (2pt) node[below left] {\footnotesize $\alpha$};
      \node at (3.2,-1.5) {\footnotesize $D_{c_1}$};
      \draw (5,1) -- (6.5,-1);
      \draw[white, line width=6pt] (4.7,0.5) .. controls (5.8,0.5) .. (6.3,1);
      \draw (4.7,0.5) .. controls (5.8,0.5) .. (6.3,1);
      \draw[white, line width=6pt] (5.75,-1) .. controls (5.8,0) .. (6.6,0.8);
      \draw (5.75,-1) .. controls (5.8,0) .. (6.6,0.8);
       \node at (5.08,0.9) [
  fill=black,
  regular polygon,
  regular polygon sides=3,
  rotate=37,
   scale=0.3
] {};
 \node at (6.2,0.9) [
  fill=black,
  regular polygon,
  regular polygon sides=3,
  rotate=65,
   scale=0.3
] {};
 \node at (6.4,0.6) [
  fill=black,
  regular polygon,
  regular polygon sides=3,
  rotate=65,
   scale=0.3
] {};
       \fill (5.8,0.6) circle (2pt) node[above left] {\footnotesize $\beta$};
      \fill (6,0.15) circle (2pt) node[below right] {\footnotesize $\alpha$};
       \node at (5.85,-1.5) {\footnotesize $D_{c_2}$};
      \draw (7.5,1) .. controls (8,0) .. (7.5,-1);
       \draw (9,1) .. controls (8.5,0) .. (9,-1);
       \draw[white, line width=6pt] (7.25,0.5) -- (9.25,0.5);
       \draw (7.25,0.5) -- (9.25,0.5);
        \node at (9.1,0.5) [
  fill=black,
  regular polygon,
  regular polygon sides=3,
  rotate=35,
   scale=0.3
] {};
 \node at (7.85,-0.15) [
  fill=black,
  regular polygon,
  regular polygon sides=3,
  rotate=120,
   scale=0.3
] {};
 \node at (8.62,-0.06) [
  fill=black,
  regular polygon,
  regular polygon sides=3,
  rotate=120,
   scale=0.3
] {};
        \fill (7.7,-0.6) circle (2pt) node[above left] {\footnotesize $\beta$};
      \fill (8.8,-0.6) circle (2pt) node[above right] {\footnotesize $\alpha$};
      \node at (8.3,-1.5) {\footnotesize $D_{c_3}$};
    \end{tikzpicture}
    \caption{Smoothing along the orientation of crossings $c_1$, $c_2$ and $c_3$.}
    \label{fig:R31}
\end{figure}
\begin{figure}[htbp]
    \centering
   \begin{tikzpicture}
    \draw (-1,1) -- (1,-1);
    \draw[white, line width=6pt] (-1,-1) -- (1,1);
    \draw (-1,-1) -- (1,1);
    \draw[white, line width=6pt] (-2,-0.5) -- (1.2,-0.5);
    \draw (-1.2,-0.5) -- (1.2,-0.5);
     \node at (-1,1) [
  fill=black,
  regular polygon,
  regular polygon sides=3,
  rotate=45,
   scale=0.3
] {};
 \node at (1,1) [
  fill=black,
  regular polygon,
  regular polygon sides=3,
  rotate=72,
   scale=0.3
] {};
 \node at (1,-0.5) [
  fill=black,
  regular polygon,
  regular polygon sides=3,
  rotate=30,
   scale=0.3
] {}; 
    \node at (-0.6,-0.3) {\tiny $c'_2$};
    \node at (0.6,-0.3) {\tiny $c'_1$};
    \node at (0,0.3) {\tiny $c'_3$};
    \draw (1.4,0) -- (2,0);
    \draw (2,0) -- (1.8,0.2);
    \draw (2,0) -- (1.8,-0.2);
    \draw (3.7,-1) .. controls (3.5,-0.5) .. (4,-0.2);
    \draw (2.5,-0.5) .. controls (3.5,0) .. (2.7,1);
    \draw[white , line width=6pt] (2.6,-1) -- (3.2,1);
    \draw (2.6,-1) -- (3.2,1);
    \draw[white , line width=6pt] (2.5,-0.5) .. controls (2.9,-0.3) ..(3.27,0);
    \draw (2.5,-0.5) .. controls (2.9,-0.3) ..(3.27,0);
      \fill (3.27,0) circle (2pt) node[below right] {\footnotesize $\beta$};
      \fill (3.6,-0.55) circle (2pt) node[below right] {\footnotesize $\alpha$};
 \node at (2.85,0.8) [
  fill=black,
  regular polygon,
  regular polygon sides=3,
  rotate=45,
   scale=0.3
] {};
 \node at (3,0.3) [
  fill=black,
  regular polygon,
  regular polygon sides=3,
  rotate=100,
   scale=0.3
] {};
 \node at (3.82,-0.3) [
  fill=black,
  regular polygon,
  regular polygon sides=3,
  rotate=70,
   scale=0.3
] {}; 
      \node at (3,-1.5) {\footnotesize $D'_{c'_1}$};
       
      \draw (5,1) -- (6.5,-1);
      \draw[white, line width=6pt] (4.7,-0.2) .. controls (5.4,-0.1) .. (5.9,1);
      \draw (4.7,-0.2) .. controls (5.4,-0.1) .. (5.9,1);
      \draw[white, line width=6pt] (5.5,-1) .. controls (5.4,-0.3) .. (6.4,0.4);
      \draw (5.5,-1) .. controls (5.4,-0.3) .. (6.4,0.4);
      \node at (5.08,0.9) [
  fill=black,
  regular polygon,
  regular polygon sides=3,
  rotate=35,
   scale=0.3
] {};
 \node at (5.85,0.9) [
  fill=black,
  regular polygon,
  regular polygon sides=3,
  rotate=90,
   scale=0.3
] {};
 \node at (6.3,0.34) [
  fill=black,
  regular polygon,
  regular polygon sides=3,
  rotate=60,
   scale=0.3
] {}; 
       \fill (5.1,-0.13) circle (2pt) node[above left] {\footnotesize $\beta$};
      \fill (5.45,-0.6) circle (2pt) node[above right] {\footnotesize $\alpha$};
      \node at (5.85,-1.5) {\footnotesize $D'_{c'_2}$};
      \draw (7.5,1) .. controls (8,0) .. (7.5,-1);
       \draw (9,1) .. controls (8.5,0) .. (9,-1);
       \draw[white, line width=6pt] (7.25,-0.5) -- (9.25,-0.5);
       \draw (7.25,-0.5) -- (9.25,-0.5);
          \node at (9.1,-0.5) [
  fill=black,
  regular polygon,
  regular polygon sides=3,
  rotate=35,
   scale=0.3
] {};
 \node at (7.85,-0.15) [
  fill=black,
  regular polygon,
  regular polygon sides=3,
  rotate=110,
   scale=0.3
] {};
 \node at (8.62,-0.06) [
  fill=black,
  regular polygon,
  regular polygon sides=3,
  rotate=120,
   scale=0.3
] {};
        \fill (7.7,0.6) circle (2pt) node[below left] {\footnotesize $\beta$};
      \fill (8.8,0.6) circle (2pt) node[below right] {\footnotesize $\alpha$};
       \node at (8.3,-1.5) {\footnotesize $D'_{c'_3}$};
    \end{tikzpicture}
    \caption{Smoothing along the orientation of crossings $c'_1$, $c'_2$ and $c'_3$.}
    \label{fig:R32}
\end{figure}


\noindent \underline{$D'$ is obtained from $D$ by SV--move:} 
Let $D'$ be a diagram obtained from $D$ by applying an SV-move at a classical crossing $c$, and let $c'$ be the corresponding crossing of $D'$. There are four possible oriented SV-moves, depending upon the orientation of the arcs. Here, we consider  two of these possibilities, as illustrated in Fig.~\ref{figSVcase1} and Fig.~\ref{figSVcase2}. It is evident from Fig.~\ref{figSVcase1} and Fig.~\ref{figSVcase2} that $D_c$ and $D'_{c'}$ are identical diagrams, with the base points preserved on the same components. Hence,
\[\operatorname{Ind}^1 (c)=\operatorname{Ind}^1 (c'),\quad \text{and}\quad \operatorname{Ind}^2 (c)=\operatorname{Ind}^2 (c'). \]
Further, $\operatorname{sgn} (c) = \operatorname{sgn}(c')$,  it follows that $\mathcal{B}_{D'}(t)=\mathcal{B}_{D}(t)$.\\

\begin{figure}[htbp]
    \centering
    \begin{tikzpicture}[scale=0.75]
       \draw (-4.25,0.5) -- (-2.25,0.5);
       \draw (-4,1) .. controls (-3.5,0) .. (-4,-1);
       \draw (-2.5,1) .. controls (-3,0) .. (-2.5,-1);
       \draw (-3.76,0.5) circle (3pt);
       \draw (-2.74,0.5) circle (3pt);
       \node at (-2.87,0) [
  fill=black,
  regular polygon,
  regular polygon sides=3,
  rotate=0,
   scale=0.3
] {};
 \node at (-3.62,0) [
  fill=black,
  regular polygon,
  regular polygon sides=3,
  rotate=0,
   scale=0.3
] {};
        \fill (-3.8,-0.6) circle (2pt) node[above left] {\footnotesize $\beta$};
      \fill (-2.7,-0.6) circle (2pt) node[above right] {\footnotesize $\alpha$};
      \node at (-3.2,-1.5) {\footnotesize $D_{c}$};
      \node at (-2.3,0.5) [
  fill=black,
  regular polygon,
  regular polygon sides=3,
  rotate=30,
   scale=0.3
] {};
      \draw (-2,0) -- (-1.25,0);
      \draw (-2,0) -- (-1.85,0.2);
      \draw (-2,0) -- (-1.85,-0.2);
       \draw (-1,1) -- (1,-1);
    \draw[white, line width=6pt] (-1,-1) -- (1,1);
    \draw (-1,-1) -- (1,1);
    \draw (-0.54,0.5) circle (3pt);
    \draw (0.54,0.5) circle (3pt);
    \draw (-1.2,0.5) -- (1.2,0.5);
    \node at (-1,1) [
  fill=black,
  regular polygon,
  regular polygon sides=3,
  rotate=45,
   scale=0.3
] {};
 \node at (1,1) [
  fill=black,
  regular polygon,
  regular polygon sides=3,
  rotate=70,
   scale=0.3
] {};
 \node at (1,0.5) [
  fill=black,
  regular polygon,
  regular polygon sides=3,
  rotate=30,
   scale=0.3
] {};
    \node at (0,-0.3) {\tiny $c$};
    \node at (0,-1.5) {\footnotesize $D$};
    \draw (1.7,0) -- (2.6,0);
    \draw (2.6,0) -- (2.45,0.2);
    \draw (2.6,0) -- (2.45,-0.2);
    \draw (1.7,0) -- (1.85,0.2);
    \draw (1.7,0) -- (1.85,-0.2);
    \draw (3.5,1) -- (5.5,-1);
    \draw[white, line width=6pt] (3.5,-1) -- (5.5,1);
    \draw (3.5,-1) -- (5.5,1);
    \draw (4,-0.5) circle (3pt);
    \draw (5,-0.5) circle (3pt);
    \draw (3.2,-0.5) -- (5.7,-0.5);
    \node at (3.5,1) [
  fill=black,
  regular polygon,
  regular polygon sides=3,
  rotate=45,
   scale=0.3
] {};
 \node at (5.5,1) [
  fill=black,
  regular polygon,
  regular polygon sides=3,
  rotate=70,
   scale=0.3
] {};
 \node at (5.5,-0.5) [
  fill=black,
  regular polygon,
  regular polygon sides=3,
  rotate=30,
   scale=0.3
] {};
    \node at (4.5,0.3) {\tiny $c'$};
     \node at (4.5,-1.5) {\footnotesize $D'$};
      \draw (6,0) -- (6.7,0);
    \draw (6.7,0) -- (6.55,0.2);
    \draw (6.7,0) -- (6.55,-0.2);

     \draw (7.5,1) .. controls (8,0) .. (7.5,-1);
       \draw (9,1) .. controls (8.5,0) .. (9,-1);
      \draw (7.75,-0.5) circle (3pt);
       \draw (8.75,-0.5) circle (3pt);
       \draw (7.25,-0.5) -- (9.25,-0.5);
       \node at (9.1,-0.5) [
  fill=black,
  regular polygon,
  regular polygon sides=3,
  rotate=30,
   scale=0.3
] {};
 \node at (7.87,-0.15) [
  fill=black,
  regular polygon,
  regular polygon sides=3,
  rotate=0,
   scale=0.3
] {};
 \node at (8.63,-0.1) [
  fill=black,
  regular polygon,
  regular polygon sides=3,
  rotate=0,
   scale=0.3
] {};
        \fill (7.7,0.6) circle (2pt) node[below left] {\footnotesize $\beta$};
      \fill (8.8,0.6) circle (2pt) node[below right] {\footnotesize $\alpha$};
       \node at (8.3,-1.5) {\footnotesize $D'_{c'}$};
\end{tikzpicture}
    \caption{SV-move and smoothing along the orientation. Case (1).}
    \label{figSVcase1}
\end{figure}
  
\begin{figure}[htbp]
    \centering
    \begin{tikzpicture}[scale=0.75]
       \draw (-4.25,0.5) -- (-2.25,0.5);
       \draw (-4,1) .. controls (-3.25,-0.5) .. (-2.5,1);
       \draw (-4,-1) .. controls (-3.25,-0.2) .. (-2.5,-1);
      \draw (-3.76,0.5) circle (3pt);
       \draw (-2.74,0.5) circle (3pt);
       \node at (-2.85,0.3) [
  fill=black,
  regular polygon,
  regular polygon sides=3,
  rotate=90,
   scale=0.3
] {};
 \node at (-2.87,-0.61) [
  fill=black,
  regular polygon,
  regular polygon sides=3,
  rotate=-20,
   scale=0.3
] {};
        \fill (-3.25,-0.44) circle (2pt) node[below] {\footnotesize $\beta$};
      \fill (-3.25,-0.12) circle (2pt) node[above] {\footnotesize $\alpha$};
      \node at (-3.2,-1.5) {\footnotesize $D_{c}$};
         \node at (-2.3,0.5) [
  fill=black,
  regular polygon,
  regular polygon sides=3,
  rotate=30,
   scale=0.3
] {};
      \draw (-2,0) -- (-1.25,0);
      \draw (-2,0) -- (-1.85,0.2);
      \draw (-2,0) -- (-1.85,-0.2);
       \draw (-1,1) -- (1,-1);
    \draw[white, line width=6pt] (-1,-1) -- (1,1);
    \draw (-1,-1) -- (1,1);
    \draw (-0.54,0.5) circle (3pt);
    \draw (0.54,0.5) circle (3pt);
    \draw (-1.2,0.5) -- (1.2,0.5);
    \node at (1,-1) [
  fill=black,
  regular polygon,
  regular polygon sides=3,
  rotate=-10,
   scale=0.3
] {};
 \node at (1,1) [
  fill=black,
  regular polygon,
  regular polygon sides=3,
  rotate=70,
   scale=0.3
] {};
 \node at (1,0.5) [
  fill=black,
  regular polygon,
  regular polygon sides=3,
  rotate=30,
   scale=0.3
] {};
    \node at (0,-0.3) {\tiny $c$};
    \node at (0,-1.5) {\footnotesize $D$};
    \draw (1.7,0) -- (2.6,0);
    \draw (2.6,0) -- (2.45,0.2);
    \draw (2.6,0) -- (2.45,-0.2);
    \draw (1.7,0) -- (1.85,0.2);
    \draw (1.7,0) -- (1.85,-0.2);
    \draw (3.5,1) -- (5.5,-1);
    \draw[white, line width=6pt] (3.5,-1) -- (5.5,1);
    \draw (3.5,-1) -- (5.5,1);
    \draw (4,-0.5) circle (3pt);
    \draw (5,-0.5) circle (3pt);
    \draw (3.2,-0.5) -- (5.7,-0.5);
      \node at (5.5,-1) [
  fill=black,
  regular polygon,
  regular polygon sides=3,
  rotate=-10,
   scale=0.3
] {};
 \node at (5.5,1) [
  fill=black,
  regular polygon,
  regular polygon sides=3,
  rotate=70,
   scale=0.3
] {};
 \node at (5.5,-0.5) [
  fill=black,
  regular polygon,
  regular polygon sides=3,
  rotate=30,
   scale=0.3
] {};
    \node at (4.5,0.3) {\tiny $c'$};
     \node at (4.5,-1.5) {\footnotesize $D'$};
      \draw (6,0) -- (6.7,0);
    \draw (6.7,0) -- (6.55,0.2);
    \draw (6.7,0) -- (6.55,-0.2);

 \draw (7.5,1) .. controls (8.25,0.5) .. (9,1);
       \draw  (7.5,-1) .. controls (8.25,0.5) .. (9,-1);
      \draw (7.75,-0.5) circle (3pt);
       \draw (8.75,-0.5) circle (3pt);
       \draw (7.25,-0.5) -- (9.25,-0.5);
         \node at (9.1,-0.5) [
  fill=black,
  regular polygon,
  regular polygon sides=3,
  rotate=30,
   scale=0.3
] {};
 \node at (7.95,-0.15) [
  fill=black,
  regular polygon,
  regular polygon sides=3,
  rotate=90,
   scale=0.3
] {};
 \node at (8.65,0.76) [
  fill=black,
  regular polygon,
  regular polygon sides=3,
  rotate=60,
   scale=0.3
] {};
        \fill (8.2,0.1) circle (2pt) node[below ] {\footnotesize $\beta$};
      \fill (8,0.7) circle (2pt) node[below ] {\footnotesize $\alpha$};
       \node at (8.3,-1.5) {\footnotesize $D'_{c'}$};
\end{tikzpicture}
    \caption{SV-move and smoothing along the orientation. Case (2).}
    \label{figSVcase2}
\end{figure}

\noindent \underline{$D'$ is obtained from $D$ by TIII--move:}
\noindent Let $D'$ be the twisted knot diagram obtained from $D$ by applying a TIII-move at the crossing $\overline{c}$. Let $\overline{c}'$ denote the corresponding crossing in $D'$. Further, let $c' \neq \overline{c}'$ be a crossing in $D'$ corresponding to a crossing $c \neq \overline{c}$ in $D$.\\

\noindent We first compare the index values of the crossings $c$ and $c'$. Let $D_c$ and $D'_{c'}$ be the twisted link diagrams obtained from $D$ and $D'$, respectively, by smoothing the crossings $c$ and $c'$ according to the orientation. Let $C$ and $C'$ be the arcs containing the base point $\alpha$ (or $\beta$) in $D_c$ and $D'_{c'}$, respectively. Let $C_{i-1},C_i,C_{i+1}$ be a portion of the consecutive sequence of arcs starting from $C$ in $D_c$, and let $C'_{i-1}$ be the corresponding arc in the consecutive sequence starting from $C'$ in $D'_{c'}$. The relevant arcs are shown in Fig.~\ref{fig:TIII*}.\\

\noindent If $\overline{c}$ and $\overline{c}'$ are self-crossings in $D_c$ and $D'_{c'}$, respectively, then the local TIII-move does not affect the computation of the indices. Hence,
\[
\operatorname{Ind}^{k}(c)=\operatorname{Ind}^{k}(c'), \qquad k=1,2.
\]

\noindent Now suppose that $\overline{c}$ and $\overline{c}'$ are linking crossings in $D_c$ and $D'_{c'}$, respectively. It is easy to observe that if $c\in \mathcal{O}_L(D:C)$, then $c$ belongs to either $\mathcal{O}(C_i)$ or $\mathcal{U}(C_i)$. For instance, assume that $c\in \mathcal{O}(C_i)$. Then the corresponding crossing $c'$ satisfies $c'\in \mathcal{U}(C'_{i-1})$, and hence $c'\in \mathcal{O}_L(D':C')$. Similarly, if $c\in \mathcal{U}_L(D:C)$, then $c'\in \mathcal{U}_L(D':C')$. The remaining cases, depending on the choice of the base point, can be verified in the same manner. Therefore,
\[
\operatorname{Ind}^{k}(c)=\operatorname{Ind}^{k}(c'), \qquad k=1,2.
\]
\begin{figure}[htbp]
    \centering
    \begin{subfigure}{}
        \begin{tikzpicture}[scale=0.7]
            \draw (8.2,1) -- (10.2,-1);
\draw[white, line width =6pt] (8.2,-1) -- (10.2,1);
\draw (8.2,-1) -- (10.2,1);
\draw(8.6,0.4) -- (8.8,0.6);
\draw(8.6,-0.4) -- (8.8,-0.6);
\draw(9.8,0.4) -- (9.6,0.6);
\draw(9.8,-0.4) -- (9.6,-0.6);  
\draw (10.4,0) -- (11.4,0);
\draw (10.4,0) -- (10.6,0.2);
\draw (10.4,0) -- (10.6,-0.2);
\draw (11.2,0.2) -- (11.4,0);
\draw (11.2,-0.2) -- (11.4,0);
\node at (10.85,0.3) {\tiny TIII};
\node at (9.2,-0.3) {\small $\overline{c}$};
\node at (12.1,0) {\small $\overline{c}'$};
\draw (13,1.5) to[out=-90  , in=120 ] (12,0.5);
\draw (13,0.5) to[out=60  , in=-90 ] (12,1.5);
\draw (12,0.5) -- (13,-0.5);
\draw[white, line width=6pt] (12,-0.5) -- (13,0.5);
\draw (12,-0.5) -- (13,0.5);
\draw (13,-1.5) to[out=90  , in=-120 ] (12,-0.5);
\draw (13,-0.5) to[out=-60  , in=90 ] (12,-1.5);
 \draw (12.5,1) circle (3pt);
 \draw (12.5,-1) circle (3pt);
 \fill (8.1,0.9) -- (8.3,1.1) --(8,1.2) --cycle;
 \fill (10.3,0.9) -- (10.35,1.2) --(10.05,1.1) --cycle;
 \fill (12.9,1.5) -- (13.1,1.5) -- (13,1.7) -- cycle;
  \fill (11.9,1.5) -- (12.1,1.5) -- (12,1.7) -- cycle;
 \node at (9.2,0.4) {\small $C_{i}$};
 \node at (7.7,-0.9) {\small $C_{i-1}$};
 \node at (10.1,1.5) {\small $C_{i+1}$};
 \node at (11.9,-1.5) {\small $C'_{i-1}$};
 \node at (13.6,1.5) {\small $C'_{i-1}$};
        \end{tikzpicture}
    \end{subfigure}
    \caption{TIII-move. }
    \label{fig:TIII*}
\end{figure}

\noindent It remains to compare the index values at the crossings $\overline{c}$ and $\overline{c}'$. By smoothing $\overline{c}$ and $\overline{c}'$ in the diagrams $D$ and $D'$, respectively, we obtain two equivalent diagrams, $D_{\overline{c}}$ and $D'_{\overline{c}'}$, with the base points interchanged, as shown in Fig.~\ref{fig:TIII}. Therefore,
\[
\operatorname{Ind}^{1}(\overline{c})
=
\operatorname{Ind}^{2}(\overline{c}')
\quad \text{and} \quad
\operatorname{Ind}^{2}(\overline{c})
=
\operatorname{Ind}^{1}(\overline{c}').
\]

\noindent Hence,
\[
\operatorname{Ind}^{1}(\overline{c})
+\operatorname{Ind}^{2}(\overline{c})
=
\operatorname{Ind}^{1}(\overline{c}')
+\operatorname{Ind}^{2}(\overline{c}').
\]

\noindent Therefore, every crossing contributes equally to $\mathcal{B}_{D}(t)$ and $\mathcal{B}_{D'}(t)$. Consequently,
\[
\mathcal{B}_{D}(t)=\mathcal{B}_{D'}(t).
\]
\begin{figure}[htbp]
    \centering
    \begin{subfigure}{}
        \begin{tikzpicture}[scale=0.7]
            \draw (8.2,1) -- (10.2,-1);
\draw[white, line width =6pt] (8.2,-1) -- (10.2,1);
\draw (8.2,-1) -- (10.2,1);
\draw(8.6,0.4) -- (8.8,0.6);
\draw(8.6,-0.4) -- (8.8,-0.6);
\draw(9.8,0.4) -- (9.6,0.6);
\draw(9.8,-0.4) -- (9.6,-0.6);  
\draw (10.4,0) -- (11.4,0);
\draw (10.4,0) -- (10.6,0.2);
\draw (10.4,0) -- (10.6,-0.2);
\draw (11.2,0.2) -- (11.4,0);
\draw (11.2,-0.2) -- (11.4,0);
\node at (10.85,0.3) {\tiny TIII};
\node at (8.9,0) {\small $\overline{c}$};
\node at (12.1,0) {\small $\overline{c}'$};
\draw (13,1.5) to[out=-90  , in=120 ] (12,0.5);
\draw (13,0.5) to[out=60  , in=-90 ] (12,1.5);
\draw (12,0.5) -- (13,-0.5);
\draw[white, line width=6pt] (12,-0.5) -- (13,0.5);
\draw (12,-0.5) -- (13,0.5);
\draw (13,-1.5) to[out=90  , in=-120 ] (12,-0.5);
\draw (13,-0.5) to[out=-60  , in=90 ] (12,-1.5);
 \draw (12.5,1) circle (3pt);
 \draw (12.5,-1) circle (3pt);
 \fill (8.1,0.9) -- (8.3,1.1) --(8,1.2) --cycle;
 \fill (10.3,0.9) -- (10.35,1.2) --(10.05,1.1) --cycle;
 \fill (12.9,1.5) -- (13.1,1.5) -- (13,1.7) -- cycle;
  \fill (11.9,1.5) -- (12.1,1.5) -- (12,1.7) -- cycle;
 \fill (8.8,-0.4) circle (2pt);
 \fill (9.6,-0.4) circle (2pt);
 \fill (12.9,-0.4) circle (2pt);
 \fill (12.1,-0.4) circle (2pt);
 \node at (9,-0.4) {\tiny $\beta$};
 \node at (9.38,-0.38) {\tiny $\alpha$};
 \node at (11.9,-0.4) {\tiny $\beta$};
 \node at (13.2,-0.4) {\tiny $\alpha$};
 \draw (6.8,0) -- (7.3,0);
 \draw (6.8,0) -- (6.95,0.1);
 \draw(6.8,0)--(6.95,-0.1);
 \draw(5,1).. controls (5.5,0) ..(5,-1);
 \draw(6,1).. controls (5.5,0)..(6,-1);
 \draw (5.4,0.55) --(5.1,0.5);
 \draw (5.4,-0.55) -- (5.1,-0.5);
 \draw (5.9,0.5) -- (5.6,0.55);
 \draw(5.9,-0.5) -- (5.6,-0.55);
 \fill (4.95,0.9) -- (4.95,1.1)--(5.15,1) -- cycle;
 \fill (5.85,1)--(6.1,0.95)--(6.05,1.15)--cycle;
 \fill (5.3,-0.25) circle (2pt);
 \fill (5.7,-0.25) circle (2pt);
 \node at (5.1,-0.25) {\tiny $\beta$};
 \node at (5.9,-0.25) {\tiny $\alpha$};
 \draw(14,0) -- (14.5,0);
 \draw(14.5,0) -- (14.35,0.1);
  \draw(14.5,0) -- (14.35,-0.1);
 \draw(15,0.75).. controls (15.2,0)..(15,-0.75);
 \draw(16,0.75).. controls (15.8,0).. (16,-0.75);
 \draw (15,0.75) to[out=120 , in=270 ] (16,1.5);
 \draw(16,0.75) to[out=60 , in=270 ] (15,1.5);
 \draw (15,-0.75) to[out=-120 , in=-270 ] (16,-1.5);
 \draw(16,-0.75) to[out=-60 , in=-270 ] (15,-1.5);
 \draw (15.5,1.15) circle (3.5pt);
  \draw (15.5,-1.15) circle (3.5pt);
  \fill (14.9,1.5)--(15.1,1.5)--(15,1.65)--cycle;
  \fill (15.9,1.5) -- (16.1,1.5) -- (16,1.65) --cycle;
  \fill (15.1,-0.35) circle (2pt);
   \fill (15.9,-0.35) circle (2pt);
   \node at (14.9,-0.35) {\tiny $\beta$};
   \node at (16.1,-0.35) {\tiny $\alpha$};
   \node at (9.2,-2) {$D$};
   \node at (12.5,-2) {$D'$};
   \node at (5.5,-2) {$D_{\overline{c}}$};
    \node at (15.5,-2) {$D'_{\overline{c}'}$};
     \draw(2,1).. controls (2.5,0) ..(2,-1);
 \draw(3,1).. controls (2.5,0)..(3,-1);
 \draw (4.5,0) --(4.3,0.2);
 \draw (4.5,0) --(4.3,-0.2);
 \draw (3.5,0) -- (3.7,-0.2);
 \draw (3.5,0) -- (3.7,0.2);
  \draw(3.5,0) -- (4.5,0);
 \fill (1.95,0.9) -- (1.95,1.1)--(2.15,1) -- cycle;
 \fill (2.85,1)--(3.1,0.95)--(3.05,1.15)--cycle;
 \fill (2.3,-0.25) circle (2pt);
 \fill (2.7,-0.25) circle (2pt);
 \node at (2.1,-0.25) {\tiny $\beta$};
 \node at (2.9,-0.25) {\tiny $\alpha$};
 \node at (4,0.3) {\tiny TII};
 \draw(18,1).. controls (18.5,0) ..(18,-1);
 \draw(19,1).. controls (18.5,0)..(19,-1);
 \draw (16.5,0) --(16.7,0.2);
 \draw (16.5,0) --(16.7,-0.2);
 \draw (17.5,0) -- (17.3,-0.2);
 \draw (17.5,0) -- (17.3,0.2);
  \draw(16.5,0) -- (17.5,0);
 \fill (17.95,0.9) -- (17.95,1.1)--(18.15,1) -- cycle;
 \fill (18.85,1)--(19.1,0.95)--(19.05,1.15)--cycle;
 \fill (18.3,-0.25) circle (2pt);
 \fill (18.7,-0.25) circle (2pt);
 \node at (18.1,-0.25) {\tiny $\alpha$};
 \node at (18.9,-0.25) {\tiny $\beta$};
 \node at (17,0.3) {\tiny VII};
        \end{tikzpicture}
    \end{subfigure}
    \caption{TIII-move and smoothing along the orientation at $c$ and $c'$.}
    \label{fig:TIII}
\end{figure}
Similarly, the result follows for the other orientation of the $T$III-move.


\noindent Considering all the cases of the moves RI, RII, RIII, SV, and TIII, we conclude that $\mathcal{B}_{D}(t)$ is a twisted virtual knot invariant.
\end{proof} 

 \begin{example}\label{exp:index}
  Let $D$ be a twisted knot diagram as shown in Fig.~\ref{fig:exp1}.
  Then the indices of the crossings $c_1^*,c_2^*,c_3^*,c_4^*,c_5^*$ are given in the Table~\ref{tab:tb1}.\\
  \begin{table}[h]
\centering
\begin{tabular}{|c|c|c|c|}
\hline
Crossing($*$) & $\operatorname{Ind}^1(*)$ & $\operatorname{Ind}^2(*)$ & $\operatorname{Ind}^1(*)+\operatorname{Ind}^2(*)$ \\ \hline
 $c_1^*$ & 0 & -2 & -2  \\ \hline
 $c_2^*$ & -2 & 0 & -2 \\ \hline
 $c_3^*$ & -2 & -2 & -4 \\ \hline
 $c_4^*$ & -2 & -2 & -4 \\ \hline
 $c_5^*$ & -2 & -2 & -4 \\ \hline
\end{tabular}
\caption{Index values of the crossings of $D$ shown in Fig.~\ref{fig:exp1}. }
\label{tab:tb1}
\end{table}

\noindent Then the $\mathcal{B}$-Index polynomial of $D$ is given as
   \[\mathcal{B}_D(t)=(t^{-2}-1)+(t^{-2}-1)+3(t^{-4}-1)=2(t^{-2}-1)+3(t^{-4}-1).\]
   \end{example}
\begin{example}\label{ex:382}
   Let $D$ be the oriented twisted knot diagram shown in Fig.~\ref{fig:exmpsec2} (a). To compute the $\mathcal{B}$-Index polynomial of $D$, we first smooth each crossing $c_1$, $c_2$, and $c_3$ according to the orientation, obtaining the twisted link diagrams $D_{c_1}$, $D_{c_2}$, and $D_{c_3}$, respectively as shown in ~\ref{fig:exmpsec2}(b) - \ref{fig:exmpsec2}(d).

    \begin{figure}[htbp]
        \centering
            \begin{tikzpicture}[scale=0.35]
            \begin{scope}
         \draw (2,0) -- (0,0) -- (0,9) -- (2,9);
         \draw (2,9) to[out=0, in =90] (6,4);
         \draw (6,4) to[out=-83, in=30] (4.1,1.95);
           \draw (3.8,1.7) .. controls (0.5,-1) and (0.5,7) .. (3,4);
           \draw[white,line width=8pt](6,9) to[out=145, in=120] (3,6);
          \draw (3.2,3.8) to[out=-60 , in=-45 ] (6,9);
          \draw (6,9) to[out=145, in=120] (3,6);
          \draw (3,6) to[out=-45 , in=60 ] (4.5,5);
          \draw (4.5,5) to[out=-145 , in=120 ] (3,3);
          \draw (3,3) to[out=-60 , in=0 ] (5,0);
          \draw (5,0) -- (2,0);
          \draw (5.84,5.45) circle (6pt);
          \draw (1,3) -- (1.5,3);
            \draw (-0.25,3) -- (0.25,3);
              \draw (4.75,5.2) -- (4.25,5.2);
              \node at (4.8,8.5) {\footnotesize $c_1$};
              \node at (3,4.7) {\footnotesize $c_2$};
              \node at (3.7,1) {\footnotesize $c_3$};
              \node at (0,5.5) [
  fill=black,
  regular polygon,
  regular polygon sides=3,
  rotate=0,
   scale=0.3
] {};
\node at (3,-1) {(a) $D$};
        \end{scope}
         \begin{scope}[xshift=9cm]
         \draw (2,0) -- (0,0) -- (0,9) -- (2,9);
         \draw (2,9) to[out=0, in =90] (3,6);
         \draw (3.3,8.5) to[out=0, in =90] (6,4);
         \draw (6,4) to[out=-83, in=30] (4.1,1.95);
           \draw (3.8,1.7) .. controls (0.5,-1) and (0.5,7) .. (3,4);
          \draw (3.2,3.8) to[out=-60 , in=-45 ] (6,9);
          \draw (6,9) to[out=145, in=120] (3.3,8.5);
          \draw (3,6) to[out=-45 , in=60 ] (4.5,5);
          \draw (4.5,5) to[out=-145 , in=120 ] (3,3);
          \draw (3,3) to[out=-60 , in=0 ] (5,0);
          \draw (5,0) -- (2,0);
          \draw (5.84,5.45) circle (6pt);
          \draw (1,3) -- (1.5,3);
            \draw (-0.25,3) -- (0.25,3);
              \draw (4.75,5.2) -- (4.25,5.2);
              \node at (3,4.7) {\footnotesize $c_2$};
              \node at (3.7,1) {\footnotesize $c_3$};
              \node at (-1,7) {$C_1$};
              \node at (-1,1) {$C_2$};
              \node at (6,10) {$\overline{C_1}$};
              \node at (0,5.5) [
  fill=black,
  regular polygon,
  regular polygon sides=3,
  rotate=0,
   scale=0.3
] {};
 \node at (3.5,3.5) [
  fill=black,
  regular polygon,
  regular polygon sides=3,
  rotate=0,
   scale=0.3
] {};
\node at (3,-1) {(b) $D_{c_1}$};
        \end{scope}
       \begin{scope}[xshift=18cm]
         \draw (2,0) -- (0,0) -- (0,9) -- (2,9);
         \draw (2,9) to[out=0, in =90] (6,4);
         \draw (6,4) to[out=-83, in=30] (4.95,2);
           \draw (4.25,1.7) .. controls (0.5,-1) and (0.5,7) .. (3,4);
           \draw[white,line width=8pt](6,9) to[out=145, in=120] (3,6);
          \draw (6,9) to[out=145, in=120] (3,6);
          \draw (3,6) to[out=-45 , in=60 ] (4.5,5);
          \draw (4.5,5) to[out=-60 , in=-60 ] (6,9);
          \draw (3,4) to[out=-60 , in=0 ] (5,0);
          \draw (5,0) -- (2,0);
          \draw (5.8,5.65) circle (6pt);
          \draw (1,3) -- (1.5,3);
            \draw (-0.25,3) -- (0.25,3);
              \draw (4.75,5.2) -- (4.25,5.2);
              \node at (4.8,8.5) {\footnotesize $c_1$};
              \node at (3.7,1) {\footnotesize $c_3$};
               \node at (-1,7) {$C_2$};
              \node at (-1,1) {$C_1$};
              \node at (6,10) {$\overline{C_1}$};
              \node at (0,5.5) [
  fill=black,
  regular polygon,
  regular polygon sides=3,
  rotate=0,
   scale=0.3
] {};
 \node at (2.9,6.4) [
  fill=black,
  regular polygon,
  regular polygon sides=3,
  rotate=180,
   scale=0.3
] {};
\node at (3,-1) {(c) $D_{c_2}$};
        \end{scope}
 \begin{scope}[xshift=27cm]
         \draw (2,0) -- (0,0) -- (0,9) -- (2,9);
         \draw (2,9) to[out=0, in =90] (6,4);
         \draw (6,4) to[out=-83, in=30] (5,0);
           \draw (3.8,1.7) .. controls (0.5,-1) and (0.5,7) .. (3.7,4);
           \draw[white,line width=8pt](6,9) to[out=145, in=120] (3,6);
          \draw (4.1,3.7) to[out=-60 , in=-45 ] (6,9);
          \draw (6,9) to[out=145, in=120] (3,6);
          \draw (3,6) to[out=-45 , in=60 ] (4.5,5);
          \draw (4.5,5) to[out=-145 , in=60 ] (3.8,1.7);
          \draw (5,0) -- (2,0);
          \draw (5.9,4.7) circle (6pt);
          \draw (1,3) -- (1.5,3);
            \draw (-0.25,3) -- (0.25,3);
              \draw (4.75,5.2) -- (4.25,5.2);
              \node at (4.8,8.5) {\footnotesize $c_1$};
              \node at (3,4.7) {\footnotesize $c_2$};
              \node at (3.8,0.9) {$\overline{C_1}$};
              \node at (-1,1) {$C_1$};
               \node at (6,10) {$\overline{C_2}$};
              \node at (0,5.5) [
  fill=black,
  regular polygon,
  regular polygon sides=3,
  rotate=0,
   scale=0.3
] {};
 \node at (2.9,6.4) [
  fill=black,
  regular polygon,
  regular polygon sides=3,
  rotate=180,
   scale=0.3
] {};
\node at (3,-1) {(d) $D_{c_3}$};
        \end{scope}

    \end{tikzpicture}
        \caption{Twisted knot diagrams $D$, $D_{c_1}$, $D_{c_2}$ and $D_{c_3}$. }
        \label{fig:exmpsec2}
    \end{figure}

\noindent For the crossing $c_1$, from the diagram $D_{c_1}$, we have
\[
\mathcal{O}(C_1)=\emptyset,\qquad \qquad~
\mathcal{U}(C_1)=\emptyset,
\]
\[
\mathcal{O}(C_2)=\{c_2,c_3\},\qquad
\mathcal{U}(C_2)=\emptyset,
\]
and
\[
\mathcal{O}(\overline{C}_1)=\emptyset,\qquad \quad
\mathcal{U}(\overline{C}_1)=\{c_2,c_3\}.
\]
Therefore,
\[
\operatorname{Ind}^1(c_1)=0,
~\text{ and } ~
\operatorname{Ind}^2(c_1)=0.
\]

\noindent Similarly, for the crossing $c_2$, the diagram $D_{c_2}$ yields
\[
\mathcal{O}(C_1)=\emptyset,\qquad \quad
\mathcal{U}(C_1)=\emptyset,
\]
\[
\mathcal{O}(C_2)=\emptyset,\qquad
\mathcal{U}(C_2)=\{c_1\},
\]
and
\[
\mathcal{O}(\overline{C}_1)=\{c_1\},\qquad
\mathcal{U}(\overline{C}_1)=\emptyset.
\]
Hence,
\[
\operatorname{Ind}^1(c_2)=1,
~\text{ and } ~
\operatorname{Ind}^2(c_2)=1.
\]

\noindent For the crossing $c_3$, from the diagram $D_{c_3}$, we obtain
\[
\mathcal{O}(C_1)=\emptyset,\qquad
\mathcal{U}(C_1)=\{c_1\},
\]
\[
\mathcal{O}(\overline{C}_1)=\emptyset,\qquad \quad
\mathcal{U}(\overline{C}_1)=\emptyset,
\]
and
\[
\mathcal{O}(\overline{C}_2)=\{c_1\},\qquad
\mathcal{U}(\overline{C}_2)=\emptyset.
\]
Thus,
\[
\operatorname{Ind}^1(c_3)=-1,
~\text{ and } ~
\operatorname{Ind}^2(c_3)=-1.
\]
Since
\[
\operatorname{sgn}(c_1)=1,\qquad
\operatorname{sgn}(c_2)=1,~\text{ and } ~
\operatorname{sgn}(c_3)=-1,
\]
it follows that
\[
\mathcal{B}_D(t)
=
\sum_{i=1}^{3}
\operatorname{sgn}(c_i)
\left(
t^{\operatorname{Ind}^1(c_i)+\operatorname{Ind}^2(c_i)}
-1
\right).
\]
Substituting the computed index values, we obtain
\[
\mathcal{B}_D(t)
=
(t^0-1)+(t^2-1)-\bigl(t^{-2}-1\bigr)
=
t^2-t^{-2}.
\]

\end{example}



\begin{remark}
In \cite{KamadaKawataOkuboShimizu}, Kamada et al.\ posed the question of whether the twisted knot diagram $3_{82}$(=$D$ in Example~\ref{ex:382}) is equivalent to the trivial non-orientable curve (a trivial loop with a bar). Let $D$ and $D^*$ denote the twisted knot diagram $3_{82}$ and the trivial non-orientable curve, respectively.

\noindent From the previous example, we have
\[
\mathcal{B}_{D}(t)=t^2-t^{-2},
\]
while a direct computation shows that
\[
\mathcal{B}_{D^*}(t)=0.
\]
Therefore,
\[
\mathcal{B}_{D}(t)\neq \mathcal{B}_{D^*}(t).
\]
Hence, the $\mathcal{B}$-Index polynomial distinguishes $D$ and $D^*$, showing that the twisted knot diagram $3_{82}$ is not equivalent to the trivial non-orientable curve. Thus, the $\mathcal{B}$-Index polynomial answers the question posed in \cite{KamadaKawataOkuboShimizu}.
\end{remark}

   \begin{remark} Consider the twisted knot diagrams $D_1$ and $D_2$ shown in Fig.~\ref{fig:D1D2}. The polynomial invariant $\bar{Q}_D$ introduced in~\cite{NKIP} does not distinguish these diagrams, since \[ \bar{Q}_{D_1}=\bar{Q}_{D_2}=-4(t-1). \] However, the $\mathcal{B}$-Index polynomial distinguishes them, \[ \mathcal{B}_{D_1}(t)=2(1-t^2) \qquad \text{and} \qquad \mathcal{B}_{D_2}(t)=0. \] Therefore, \[ \mathcal{B}_{D_1}(t)\neq \mathcal{B}_{D_2}(t), \] and, hence $D_1$ and $D_2$ are not equivalent. 
    \begin{figure}[htbp]
        \centering
        \begin{subfigure}{}
            \begin{tikzpicture}[scale=0.8]
                \draw (1,0)--(0,1);
                \draw[white, line width=4pt] (0,0)--(1,1);
                \draw (0,0)--(1,1);
                \draw (1,1)--(2,0);
                \draw[white, line width=4pt] (1,0)--(2,1);
                \draw (1,0)--(2,1);
                \draw (2,1)--(3,0);
                \draw[white, line width=4pt] (2,0)--(3,1);
                \draw (2,0)--(3,1);
                \draw (0,1) to[out=135 , in=45 ] (3,1);
                \draw (-0.5,0.5).. controls (-1,2) ..(2.5,2)..controls (4,1.9) .. (3.5,0.5);
                \draw (-0.5,0.5) to[out=-55,in=235] (0,0);
                \draw (3.5,0.5) to[out=-120,in=-55] (3,0);
                \draw (2,0.2)--(2,-0.1);
                \draw (2.9,1.1)--(3.15,0.9);
                \fill (2.5,2.15)--(2.5,1.85)--(2.65,2)--cycle;
                \node at (1.5,-0.7) {$D_1$};
            \end{tikzpicture}
        \end{subfigure}
        \begin{subfigure}{}
             \begin{tikzpicture}[scale=0.7]
                \draw (0,0)--(1,1);
                \draw[white, line width=4pt] (1,0)--(0,1);
                \draw (1,0)--(0,1);
               \draw (1,0)--(2,1);
                \draw[white, line width=4pt] (1,1)--(2,0);
                 \draw (1,1)--(2,0);
                \draw (2,0)--(3,1);
                \draw[white, line width=4pt] (2,1)--(3,0);
               \draw (2,1)--(3,0); 
               \draw (3,0) -- (4,1);
               \draw[white, line width=4pt] (3,1) -- (4,0);
               \draw (3,1) -- (4,0);
               \draw (4,1) .. controls (6,1.5) and (4.5,-1) ..(4,0);
               \draw[white, line width=4pt] (4.8,0)--(4.9,0.4);
               \draw (0,1) .. controls (-1,2) and (6,2) ..(5,1) .. controls (3.5,0.3) and (6,0.3) .. (4.8,-0.9) .. controls (0,-1) .. (0,0);
                \fill (2.5,1.9)--(2.5,1.6)--(2.65,1.75)--cycle;
                \draw (4.88,0.95) circle (4pt);
                \node at (2.5,-1.5) {$D_2$};
                \end{tikzpicture}
        \end{subfigure}
        \caption{Diagrams $D_1$ and $D_2$.}
        \label{fig:D1D2}
    \end{figure}
   \end{remark}
  
\noindent In the following proposition, we construct a family of twisted knots $D_n$, for $n>1$, whose polynomial is given in terms of $n$.

\begin{proposition}\label{prop:Dn}
For each integer $n>1$, there exists a twisted knot $D_n$ whose
$\mathcal{B}$-Index polynomial is given by
\[
\mathcal{B}_{D_n}(t)=2t^{4(n-1)}+2(n-1)t^4-2n.
\]
\end{proposition}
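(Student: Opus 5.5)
The proof is a construction followed by a direct computation, so the first task is to choose $D_n$ so that the index values organize into two types. The target formula helps here. Write $\mathcal{B}_{D_n}(t)=2(t^{4(n-1)}-1)+2(n-1)(t^4-1)$. This suggests $2n$ classical crossings, all of sign $+1$:
\begin{itemize}
\item two crossings with $\operatorname{Ind}^1+\operatorname{Ind}^2=4(n-1)$;
\item $2(n-1)$ crossings with $\operatorname{Ind}^1+\operatorname{Ind}^2=4$.
\end{itemize}
Since the paper also says the $D_n$ have arc shift number $n$, I would make the diagram contain $n$ repeated blocks. Each block carries two positive odd crossings and a pair of bars. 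Then the odd writhe is $J(D_n)=2n$, and Theorem~\ref{thm:prearc} gives $A(D_n)\ge n$. Exhibiting $n$ explicit arc shifts gives the reverse inequality, although this is not needed for the polynomial formula itself.

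Concretely, I would take a twist-type diagram generalizing $D_1$ of Fig.~\ref{fig:D1D2} or $3_{82}$. It is a closed braid-like chain of $2n$ positive crossings, with bars placed so that traversing the diagram alternates arcs between consecutive crossing pairs. The diagram should have a rotational symmetry permuting the $n$ blocks, so the index computation reduces to a few crossing types up to symmetry. For each crossing type I would smooth along the orientation as in Fig.~\ref{fig:Fig1}. Next I would list the consecutive arcs $\langle C_1,\dots,C_k\rangle$ on the $\alpha$-component and on the $\beta$-component. Then I would read off $\mathcal{O}(C_i)$ and $\mathcal{U}(C_i)$ and assemble $\mathcal{O}_L$ and $\mathcal{U}_L$ with the alternation rule. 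The intended result is:
\begin{itemize}
\item for a generic crossing, every other linking crossing of its own block and the neighbouring block contributes with the same sign, giving $\operatorname{Ind}^1=\operatorname{Ind}^2=2$;
\item for the two distinguished crossings, whose smoothing separates the diagram into a component containing all the other blocks, the alternation of $\mathcal{O}$ and $\mathcal{U}$ across bars makes each of the $2(n-1)$ linking crossings contribute $+1$ to both indices, giving $\operatorname{Ind}^1=\operatorname{Ind}^2=2(n-1)$.
\end{itemize}

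Given these index values, the formula follows by summation:
\[
\mathcal{B}_{D_n}(t)=2\bigl(t^{4(n-1)}-1\bigr)+2(n-1)\bigl(t^{4}-1\bigr)=2t^{4(n-1)}+2(n-1)t^4-2n.
\]
By Theorem~\ref{thm-main} this polynomial is an invariant of the twisted knot represented by $D_n$.

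The main obstacle is designing the bar placement so that the signs really align, namely so that no linking crossing cancels against another. The rule that swaps $\mathcal{O}$ and $\mathcal{U}$ on even-indexed arcs must turn would-be cancellations into additions. I would first verify the pattern by hand for $n=2$ and $n=3$, adjusting bar positions until $\operatorname{Ind}$ values $(2,2)$ and $(2(n-1),2(n-1))$ appear. I would then prove the general case by induction on $n$: inserting one additional block adds exactly two linking crossings of the right type to the distinguished crossings' smoothings, and leaves the local computation at the generic crossings unchanged.
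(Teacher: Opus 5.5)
There is a genuine gap. This is an existence statement, so the entire proof is an explicit diagram $D_n$ together with a verified table of signs and of $\operatorname{Ind}^1,\operatorname{Ind}^2$, and you supply neither.

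\textbf{The missing construction.} Your diagram is described only by the properties you want it to have (``bars placed so that \dots'', ``adjusting bar positions until \dots appear''). The index values are announced as the ``intended result'' rather than computed from any smoothing. Your induction step has the same problem: it asserts, with no diagram behind it, that adding a block leaves the generic crossings' computation unchanged. The final summation is fine, but only once those values are established. The paper instead fixes a concrete family in Fig.~\ref{fig:placeholder2}, built by repeating a local block of $D_2$, with $2n$ positive crossings $a_1,\dots,a_{2n}$. It then reads the index pairs off Table~2.

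\textbf{The design targets are incompatible.} In $D_c$, each linking crossing has one strand on the $\alpha$-component and one on the $\beta$-component. It therefore lies in exactly one of $\mathcal{O}_L,\mathcal{U}_L$ for each base arc and contributes $\pm1$ to both $\operatorname{Ind}^1(c)$ and $\operatorname{Ind}^2(c)$. So both indices are congruent modulo $2$ to the number of linking crossings. That number has the same parity as the number of classical crossings met when travelling from $c$ back to $c$, since self-crossings of that loop are met twice and linking crossings once. Hence $c$ is odd if and only if $\operatorname{Ind}^1(c)$ and $\operatorname{Ind}^2(c)$ are odd, and this parity does not depend on where the bars sit.

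Your requirement that every block carry two odd crossings (to get $J(D_n)=2n$) therefore rules out the pairs $(2,2)$ and $(2(n-1),2(n-1))$ you are aiming for. No amount of adjusting bar positions will produce them. The paper realizes the required sums $4(n-1)$ and $4$ with odd, unequal pairs:
\begin{itemize}
\item $(2n-3,\,2n-1)$ at $a_1$;
\item $(2n-1,\,2n-3)$ at $a_{2n}$;
\item $(2n-(2i-3),\,-(2n-(2i+1)))$ at the intermediate crossings.
\end{itemize}
The intermediate crossings' individual indices grow with $n$ and only their sum stays $4$. So your locality heuristic, that only a crossing's own block and the neighbouring block contribute, is not what happens in the paper's family either.

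\textbf{How to repair it.} You can drop the oddness requirement, which is harmless for this proposition but loses the arc-shift application. Or you can target odd index pairs, as the paper's family does. Either way, you must actually exhibit the diagram and carry out the smoothing computations crossing by crossing.
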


\begin{proof}
Consider the family of twisted knots $\{D_n\}_{n>1}$ shown in
Fig.~\ref{fig:placeholder2}, where $D_n$ is obtained by repeating the indicated local configuration $n-1$ times in the diagram $D_2$.

\begin{figure}[htbp]
    \centering
    \begin{subfigure}{}
         \begin{tikzpicture}[scale=0.42]
          \draw (1,0.5) -- (1,-0.5);
          \draw[white, line width=6pt] (0.9,0) -- (1.1,0);
           \draw (0,0) --(7,0);
           \draw[white, line width=6pt] (2,0.5) -- (2,-0.5);
          \draw (2,0.5) -- (2,-0.5);
          \draw (3,0.5) -- (3,-0.5);
          \draw[white, line width=6pt] (4,0.5) -- (4,-0.5);
          \draw (4,0.5) -- (4,-0.5);
          \draw (5,0.5) -- (5,-0.5);
          \draw[white, line width=6pt] (6,0.5) -- (6,-0.5);
          \draw (6,0.5) -- (6,-0.5);
         \draw (0,0) to[out=-100, in=-100 ] (6,-0.5);
          \draw (1,0.5) to[out=80 , in=100 ] (2,0.5);
        \draw (2,-0.5) to[out=-80 , in=-100 ] (3,-0.5);
         \draw (3,0.5) to[out=80 , in=100 ] (4,0.5);
        \draw (4,-0.5) to[out=-80 , in=-100 ] (5,-0.5);
         \draw (5,0.5) to[out=80 , in=100 ] (6,0.5);
          \draw (1,-0.5) to[out=-80 , in=-40 ] (7,0);
         \draw (5,-1.6) circle (0.3cm);
         \draw (5.5,-0.2) -- (5.5,0.2);
        \draw (1.5,0.2) -- (1.5,-0.2);
         \draw (1.5,1) -- (1.5,0.6);
         \draw (3,0) circle (0.2cm);
         \draw (5,0) circle (0.2cm);
         \draw (0.4,0.2) -- (0.6,0);
         \draw (0.6,0) -- (0.4,-0.2);
         \node at (0.55,-0.4) {\scriptsize $a_1$};
         \node at (2.45,-0.3) {\scriptsize $a_2$};
         \node at (4.45,-0.3) {\scriptsize $a_3$};
         \node at (6.45,-0.3) {\scriptsize $a_4$};
          \node at (3.55,0.4) {\scriptsize $v_1$};
         \node at (5.45,0.4) {\scriptsize $v_2$};
         \node at (3,-3) {$D_2$};
      \end{tikzpicture}
    \end{subfigure}
     \begin{subfigure}{}
         \begin{tikzpicture}[scale=0.42]
          \draw (1,0.5) -- (1,-0.5);
          \draw[white, line width=6pt] (0.9,0) -- (1.1,0);
           \draw (0,0) --(11,0);
           \draw[white, line width=6pt] (2,0.5) -- (2,-0.5);
          \draw (2,0.5) -- (2,-0.5);
          \draw (3,0.5) -- (3,-0.5);
          \draw[white, line width=6pt] (4,0.5) -- (4,-0.5);
          \draw (4,0.5) -- (4,-0.5);
          \draw (5,0.5) -- (5,-0.5);
          \draw[white, line width=6pt] (6,0.5) -- (6,-0.5);
          \draw (6,0.5) -- (6,-0.5);
          \draw (7,0.5) -- (7,-0.5);
          \draw[white, line width=6pt] (8,0.5) -- (8,-0.5);
          \draw (8,0.5) -- (8,-0.5);
           \draw (9,0.5) -- (9,-0.5);
           \draw[white, line width=6pt] (10,0.5) -- (10,-0.5);
          \draw (10,0.5) -- (10,-0.5);
         \draw (0,0) to[out=-100, in=-100 ] (10,-0.5);
          \draw (1,0.5) to[out=80 , in=100 ] (2,0.5);
        \draw (2,-0.5) to[out=-80 , in=-100 ] (3,-0.5);
         \draw (3,0.5) to[out=80 , in=100 ] (4,0.5);
        \draw (4,-0.5) to[out=-80 , in=-100 ] (5,-0.5);
         \draw (5,0.5) to[out=80 , in=100 ] (6,0.5);
        \draw (6,-0.5) to[out=-80 , in=-100 ] (7,-0.5);
         \draw (7,0.5) to[out=80 , in=100 ] (8,0.5);
          \draw (1,-0.5) to[out=-80 , in=-40 ] (11,0);
        \draw (8,-0.5) to[out=-80 , in=-100 ] (9,-0.5);
         \draw (9,0.5) to[out=80 , in=100 ] (10,0.5);
        
        \draw (8,-2.5) circle (0.4cm);
         \draw (9.5,-0.2) -- (9.5,0.2);
        \draw (1.5,0.2) -- (1.5,-0.2);
         \draw (1.5,1) -- (1.5,0.6);
         \draw (3,0) circle (0.2cm);
         \draw (5,0) circle (0.2cm);
         \draw (7,0) circle (0.2cm);
         \draw (9,0) circle (0.2cm);
         \draw (0.4,0.2) -- (0.6,0);
         \draw (0.6,0) -- (0.4,-0.2);
         \node at (0.55,-0.4) {\scriptsize $a_1$};
         \node at (2.45,-0.3) {\scriptsize $a_2$};
         \node at (4.45,-0.3) {\scriptsize $a_3$};
         \node at (6.45,-0.3) {\scriptsize $a_4$};
         \node at (8.45,-0.3) {\scriptsize $a_5$};
         \node at (10.45,-0.3) {\scriptsize $a_6$};
          \node at (3.55,0.4) {\scriptsize $v_1$};
         \node at (5.45,0.4) {\scriptsize $v_2$};
         \node at (7.45,0.4) {\scriptsize $v_3$};
         \node at (9.45,0.4) {\scriptsize $v_4$};
         \draw[dotted, gray] (6.5,1.5) rectangle (10.5,-1);
         \node at (6,-4) {$D_3$};
      \end{tikzpicture}
    \end{subfigure}
    \begin{subfigure}{}
    \begin{tikzpicture}[scale=0.42]
          \draw (1,0.5) -- (1,-0.5);
          \draw[white, line width=6pt] (0.9,0) -- (1.1,0);
           \draw (0,0) --(11,0);
           \draw[white, line width=6pt] (2,0.5) -- (2,-0.5);
          \draw (2,0.5) -- (2,-0.5);
          \draw (3,0.5) -- (3,-0.5);
          \draw[white, line width=6pt] (4,0.5) -- (4,-0.5);
          \draw (4,0.5) -- (4,-0.5);
          \draw (5,0.5) -- (5,-0.5);
          \draw[white, line width=6pt] (6,0.5) -- (6,-0.5);
          \draw (6,0.5) -- (6,-0.5);
          \draw (7,0.5) -- (7,-0.5);
          \draw[white, line width=6pt] (8,0.5) -- (8,-0.5);
          \draw (8,0.5) -- (8,-0.5);
           \draw (9,0.5) -- (9,-0.5);
           \draw[white, line width=6pt] (10,0.5) -- (10,-0.5);
          \draw (10,0.5) -- (10,-0.5);
          \draw[dotted] (11,0) -- (12,0);
          \draw (12,0) -- (14,0);
         \draw[white, line width=6pt] (14,0.5) -- (14,-0.5);
          \draw (14,0.5) -- (14,-0.5);
          \draw (14.2,0) -- (15,0);
         \draw (0,0) to[out=-100, in=-100 ] (14,-0.5);
          \draw (1,0.5) to[out=80 , in=100 ] (2,0.5);
        \draw (2,-0.5) to[out=-80 , in=-100 ] (3,-0.5);
         \draw (3,0.5) to[out=80 , in=100 ] (4,0.5);
        \draw (4,-0.5) to[out=-80 , in=-100 ] (5,-0.5);
         \draw (5,0.5) to[out=80 , in=100 ] (6,0.5);
        \draw (6,-0.5) to[out=-80 , in=-100 ] (7,-0.5);
         \draw (7,0.5) to[out=80 , in=100 ] (8,0.5);
         \draw (1,-0.5) to[out=-80 , in=-40 ] (15,0);
        \draw (13,-0.5) -- (13,0.5);
         \draw (13,0.5) to[out=80 , in=100 ] (14,0.5);
        \draw (8,-0.5) to[out=-80 , in=-100 ] (9,-0.5);
         \draw (9,0.5) to[out=80 , in=100 ] (10,0.5);
        \draw (10,-0.5) to[out=-80 , in=-100 ] (11,-0.5);
        \draw (12,-0.5) to[out=-80 , in=-120 ] (13,-0.5);
        
        \draw (11.5,-3.4) circle (0.5cm);
         \draw (13.5,-0.2) -- (13.5,0.2);
        \draw (1.5,0.2) -- (1.5,-0.2);
         \draw (1.5,1) -- (1.5,0.6);
         \draw (3,0) circle (0.2cm);
         \draw (5,0) circle (0.2cm);
         \draw (7,0) circle (0.2cm);
         \draw (9,0) circle (0.2cm);
         \draw (13,0) circle (0.2cm);
         \draw (0.4,0.2) -- (0.6,0);
         \draw (0.6,0) -- (0.4,-0.2);
         \node at (0.55,-0.4) {\scriptsize $a_1$};
         \node at (2.45,-0.3) {\scriptsize $a_2$};
         \node at (4.45,-0.3) {\scriptsize $a_3$};
         \node at (6.45,-0.3) {\scriptsize $a_4$};
         \node at (8.45,-0.3) {\scriptsize $a_5$};
         \node at (10.45,-0.3) {\scriptsize $a_6$};
         \node at (14.6,-0.3) {\scriptsize $a_{2n}$};
          \node at (3.55,0.4) {\scriptsize $v_1$};
         \node at (5.45,0.4) {\scriptsize $v_2$};
         \node at (7.45,0.4) {\scriptsize $v_3$};
         \node at (9.45,0.4) {\scriptsize $v_4$};
         \node at (8.45,-0.3) {\scriptsize $a_5$};
         \node at (12.1,0.4) {\scriptsize $v_{2n-2}$};
         \draw[dotted, gray] (6.5,1.5) rectangle (10.5,-1);
         \draw[dotted, gray] (10.5,0.5) -- (11.5,0.5);
         \draw[dotted, gray] (11.5,1.5) rectangle (14.5,-1);
         \draw (11.5,2) -- (14.5,2);
         \draw(14.5,1.8)--(14.5,2);
          \node at (7,-5) {$D_n$};
      \end{tikzpicture}
      \end{subfigure}
    \caption{Twisted Knot $D_n$.}
    \label{fig:placeholder2}
\end{figure}

\noindent The diagram $D_n$ contains $2n$ classical crossings. A direct computation of the index values of these crossings yields the values listed in Table 2.

\begin{table}[htpb]
\tbl{Values of $\operatorname{Ind}^1(c)$ and $\operatorname{Ind}^2(c)$ for diagrams presented in Fig.~\ref{fig:placeholder2}. }
{\begin{tabular}{|c| c |c| c | } \toprule
$crossing(c)$ & $\operatorname{Ind}^1(c)$ & $\operatorname{Ind}^2(c)$ & $\operatorname{Ind}^1(c)+\operatorname{Ind}^2(c)$ \\ \hline
$a_1$ & $(2n-3)$ & $(2n-1) $ & $4(n-1)$\\ \hline
$a_2$ & $(2n-1)$ & $-(2n-5)$ & $4$ \\ \hline
$a_3$ & $(2n-3)$ & $-(2n-7)$ & $4$  \\ \hline
$a_4$ & $(2n-5$) & $-(2n-9)$ & $4$  \\ \hline
$a_5$ & $(2n-7)$ & $-(2n-11)$ & $4$  \\ \hline
$\vdots$ & $\vdots$ & $\vdots$ & $\vdots$\\ \hline
$a_{i}$ & $(2n-(2i-3))$ & $-(2n-(2i+1))$  & $4$  \\ \hline
$a_{i+1}$ & $-(2n-(2i+1))$ & $(2n-(2i-3))$ & $4$  \\ \hline
$\vdots$ & $\vdots$ & $\vdots$ & $\vdots$ \\ \hline
$a_{2n-2}$ & $-(2n-7)$ & $(2n-3)$ & $4$  \\ \hline
$a_{2n-1}$ & $-(2n-5)$ & $(2n-1)$ & $4$  \\ \hline
$a_{2n}$ & $(2n-1)$ & $(2n-3)$ & $4(n-1)$ \\ \hline
\end{tabular}
\label{tab:tb}} 
\vspace{0.5cm}
where, $2\leq i \leq n$.
\end{table}

\noindent Since all crossings of $D_n$ are positive, we have
\[
\mathcal{B}_{D_n}(t)
 = \sum_{c\in C(D_n)}
 \left(t^{\operatorname{Ind}^1(c)+\operatorname{Ind}^2(c)}-1\right).
\]

\noindent From Table 2, the crossings $a_1$ and $a_{2n}$ satisfy
\[
\operatorname{Ind}^1(c)+\operatorname{Ind}^2(c)=4(n-1),
\]
while each of the remaining $2n-2$ crossings satisfies
\[
\operatorname{Ind}^1(c)+\operatorname{Ind}^2(c)=4.
\]

\noindent Therefore,
\[
\begin{aligned}
\mathcal{B}_{D_n}(t)
&=2\bigl(t^{4(n-1)}-1\bigr)
 +(2n-2)\bigl(t^4-1\bigr)\\
&=2t^{4(n-1)}+2(n-1)t^4-2n.
\end{aligned}
\]

\noindent This completes the proof.
\end{proof}

\begin{proposition}\label{prop:arcshiftDn}
For the family of twisted knots $\{D_n\}_{n>1}$ shown in Fig.~\ref{fig:placeholder2}, the arc shift number satisfies 
\[ A(D_n)=n. \] 
\end{proposition}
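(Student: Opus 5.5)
The proof will squeeze $A(D_n)$ between a lower bound coming from the odd writhe and an upper bound coming from an explicit unknotting by arc shifts.

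\emph{Lower bound.} We invoke Theorem~\ref{thm:prearc}, which gives $|J(D_n)/2|\le A(D_n)$. So it suffices to show $J(D_n)=2n$ (or $-2n$). All $2n$ classical crossings $a_1,\dots,a_{2n}$ of $D_n$ are positive, as noted in the proof of Proposition~\ref{prop:Dn}. It therefore remains to show that every crossing is odd. For this we trace $D_n$ from each $a_i$ back to itself along the diagram and count the classical crossings met on one of the two loops. The diagram is a chain of $n-1$ identical repeated blocks attached to the $D_2$ pattern, so the count can be organized block by block. We first check the $D_2$ pattern by hand. Then we show that inserting one block adds an even number of classical crossings to each loop through an old crossing, and that the two new crossings $a_{2k-1},a_{2k}$ in a block have loops with the same parity as their neighbours. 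An induction on $n$ then gives that all crossings are odd.

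As a consistency check, the index values in Table~\ref{tab:tb} should agree. In the virtual setting, odd crossings correspond to odd index. In Table~\ref{tab:tb} every $\operatorname{Ind}^1(a_i)$ is odd (either $\pm(2n-\text{odd})$), which supports the claim. We could alternatively try to prove directly that the parity of $\operatorname{Ind}^1(c)$ equals the oddness of $c$, since bars only flip which of $\mathcal{O}$ and $\mathcal{U}$ is counted, not the count mod $2$. Granting this, $J(D_n)=2n$ and hence $A(D_n)\ge n$.

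\emph{Upper bound.} We exhibit $n$ arc shift moves turning $D_n$ into a trivial twisted knot. The plan is to apply one arc shift on the arc segment between $a_{2k-1}$ and $a_{2k}$ in each pair, for $k=1,\dots,n$. Each such shift should convert the pair into a configuration removable by RII together with virtual and twisted moves (typically a shift makes two adjacent crossings cancel, or produces RI loops). After all $n$ shifts, the diagram should reduce by virtual, twisted and RI moves to a circle with at most one bar. This gives $A(D_n)\le n$.

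The main obstacle is this upper bound: choosing the right arc segments and verifying, with the bars present, that the diagram after $n$ shifts really is trivial. We would first do this explicitly for $D_2$ and then argue that the repeated block is handled by one shift each, using induction on $n$. If some blocks do not close up cleanly, a fallback is to choose arc segments passing through a bar, so that the orientation reversal in the arc shift converts crossings into cancelling pairs.
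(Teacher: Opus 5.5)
Your lower bound is the paper's argument: Theorem~\ref{thm:prearc} with $J(D_n)=2n$, which the paper simply asserts from ``all $2n$ crossings are positive and odd''. Your parity remark is a correct way to justify oddness. A linking crossing of $D_c$ lies on exactly one arc of the $\alpha$-component, so it is counted exactly once in $\mathcal{O}_L\cup\mathcal{U}_L$. Self-crossings of that loop are met twice. Hence $\operatorname{Ind}^1(c)$ has the parity of the number of classical crossings met on the loop from $c$ back to $c$, and every $\operatorname{Ind}^1$ entry of Table~\ref{tab:tb} is odd.

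The gap is the upper bound $A(D_n)\le n$. This is the substantive half of the proposition, and you only describe it as something that ``should'' work. The concrete plan also fails as stated.

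\begin{itemize}
\item For $k\ge 2$ your segments do not exist. In $D_n$ the crossings $a_{2k-1}$ and $a_{2k}$ are separated by the virtual crossing $v_{2k-2}$, both along the long horizontal strand and along the zigzag strand. An arc segment passes through exactly two crossings, so none has $a_{2k-1}$ and $a_{2k}$ as its pair.
\item For $k=1$ the segment exists, but the arc shift reverses the orientation of the shifted segment. This flips the signs of both crossings on it. Since $a_1$ and $a_2$ are both positive, afterwards they are both negative and cannot cancel each other by an RII move, contrary to the mechanism you anticipate.
\item ``One shift per pair'' ignores that each shift and simplification changes the diagram on which the next segment must be chosen.
\end{itemize}

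The paper instead works inward from the end of the chain. It shifts the arc segment labelled $1$ in Fig.~\ref{fig:T}, which runs over the top cap from the last virtual crossing $v_{2n-2}$ to the last classical crossing $a_{2n}$. An RII move and a VRII move then remove two classical and two virtual crossings, giving $D^1$. The same step is repeated on the segment labelled $2$, and so on. After $n-1$ shifts, $D^{n-1}$ has two classical crossings, one virtual crossing and three bars, and a final shift yields the trivial knot with a bar.

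You need to supply an explicit, inductively repeatable reduction of this kind, with the cancelling moves identified. Without it, only $A(D_n)\ge n$ is established.
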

\begin{proof}
To determine the arc shift number of $D_n$, we apply Theorem~\ref{thm:prearc}, which states that for any twisted knot $K$,
\[
\frac{|J(K)|}{2}\leq A(K),
\]
where $J(K)$ denotes the odd writhe of $K$.
Since all $2n$ classical crossings of $D_n$ are positive and odd, it follows that
\[
J(D_n)=2n.
\]
Therefore,
\[
A(D_n)\geq \frac{|J(D_n)|}{2}
       = \frac{2n}{2}
       = n.
\]
Thus, the arc shift number of $D_n$ is bounded below by $n$.
To show that this lower bound is sharp, we construct an explicit sequence of arc shift moves that transforms $D_n$ into the trivial twisted knot with a bar using exactly $n$ arc shift moves.\\

\noindent First, apply an arc shift move to the arc segment labeled $1$ in Fig.~\ref{fig:T}. The resulting diagram, denoted by $D^{1}$, simplifies via one RII move and one VRII move, removing two classical crossings and two virtual crossings.\\

\noindent Next, apply an arc shift move to the arc segment labeled $2$ of $D^{1}$. After simplifying the resulting diagram by the same sequence of moves, we obtain a twisted knot $D^{2}$ with two fewer classical crossings and two fewer virtual crossings than $D^{1}$.\\

\noindent Proceeding inductively, after $n-1$ arc shift moves, we obtain a twisted knot $D^{n-1}$ consisting of two classical crossings, one virtual crossing, and three bars. A final arc shift move transforms $D^{n-1}$ into the trivial twisted knot with a bar.\\

\noindent Hence, $D_n$ can be reduced to the trivial twisted knot with a bar using exactly $n$ arc shift moves. Therefore,
\[
A(D_n)\leq n.
\]

\noindent Combining this inequality with the previously established lower bound
\[
A(D_n)\geq n,
\]
we conclude that
\[
A(D_n)=n.
\]
 \begin{figure}[htbp]
    \centering
    \begin{subfigure}{}
        \begin{tikzpicture}[scale=0.3]
          \draw (1,0.5) -- (1,-0.5);
          \draw[white, line width=6pt] (0.9,0) -- (1.1,0);
           \draw (0,0) --(5.5,0);
           \draw (7,0) -- (14,0);
           \draw[white, line width=6pt] (2,0.5) -- (2,-0.5);
          \draw (2,0.5) -- (2,-0.5);
          \draw (3,0.5) -- (3,-0.5);
          \draw[white, line width=6pt] (4,0.5) -- (4,-0.5);
          \draw (4,0.5) -- (4,-0.5);
          \draw (5,0.5) -- (5,-0.5);
          \draw (7,0.5) -- (7,-0.5);
           \draw[white, line width=6pt] (8,0.5) -- (8,-0.5);
          \draw (8,0.5) -- (8,-0.5);
          \draw (9,0.5) -- (9,-0.5);
          \draw[white, line width=6pt] (10,0.5) -- (10,-0.5);
          \draw (10,0.5) -- (10,-0.5);

           \draw (11,0.5) -- (11,-0.5);
           \draw[white, line width=6pt] (12,0.5) -- (12,-0.5);
          \draw (12,0.5) -- (12,-0.5);
          \draw[dotted] (5.5,0) -- (7,0);
          
         \draw[white, line width=6pt] (14,0.5) -- (14,-0.5);
          \draw (14,0.5) -- (14,-0.5);
          \draw (14.2,0) -- (15,0);
         \draw (0,0) to[out=-100, in=-100 ] (14,-0.5);
          \draw (1,0.5) to[out=80 , in=100 ] (2,0.5);
        \draw (2,-0.5) to[out=-80 , in=-100 ] (3,-0.5);
         \draw (3,0.5) to[out=80 , in=100 ] (4,0.5);
        \draw (4,-0.5) to[out=-80 , in=-100 ] (5,-0.5);
         \draw (5,0.5) to[out=80 , in=100 ] (6,0.5);
         \draw (8,-0.5) to[out=-80 , in=-100 ] (9,-0.5);
         \draw (7,0.5) to[out=80 , in=100 ] (8,0.5);
         \draw (9,0.5) to[out=80 , in=100 ] (10,0.5);
         \draw (1,-0.5) to[out=-80 , in=-40 ] (15,0);
        \draw (13,-0.5) -- (13,0.5);
         \draw (13,0.5) to[out=80 , in=100 ] (14,0.5);
        \draw (10,-0.5) to[out=-80 , in=-100 ] (11,-0.5);
         \draw (11,0.5) to[out=80 , in=100 ] (12,0.5);
        \draw (12,-0.5) to[out=-80 , in=-120 ] (13,-0.5);
        
        \draw (11.5,-3.4) circle (0.5cm);
        \draw (13.5,0.2) -- (13.5,-0.2);
        \draw (1.3,0.2) -- (1.3,-0.2);
        \draw (1.3,1) -- (1.3,0.6);
         \draw (3,0) circle (0.2cm);
         \draw (5,0) circle (0.2cm);
         \draw (7,0) circle (0.2cm);
         \draw (9,0) circle (0.2cm);
         \draw (11,0) circle (0.2cm);
         \draw (13,0) circle (0.2cm);
         \draw (0.4,0.2) -- (0.6,0);
         \draw (0.6,0) -- (0.4,-0.2);
         \draw (12.5,1) -- (12.5,1.3);
         \draw (14.5,1) -- (14.5,1.3);
         \draw (12.5,1.3) -- (14.5,1.3);
         \node at (13.5,1.7) {\tiny 1};
         \draw (17.2,0) -- (19,0);
         \draw (18.7,0.2) -- (19,0);
         \draw (18.7,-0.2) -- (19,0);
         \node at (17.9,1.2) {\tiny $1^{st}$ };
         \node at (17.9,0.7) {\tiny arc};
          \node at (17.9,-0.7) {\tiny shift};
          \node at (17.9,-1.3) {\tiny move};
          \end{tikzpicture}
    \end{subfigure}
   \begin{subfigure}{}
           \begin{tikzpicture}[scale=0.3]
          \draw (1,0.5) -- (1,-0.5);
          \draw[white, line width=6pt] (0.9,0) -- (1.1,0);
           \draw (0,0) --(5.5,0);
           \draw (7,0) -- (12,0);
           \draw[white, line width=6pt] (2,0.5) -- (2,-0.5);
          \draw (2,0.5) -- (2,-0.5);
          \draw (3,0.5) -- (3,-0.5);
          \draw[white, line width=6pt] (4,0.5) -- (4,-0.5);
          \draw (4,0.5) -- (4,-0.5);
          \draw (5,0.5) -- (5,-0.5);
          \draw (7,0.5) -- (7,-0.5);
           \draw[white, line width=6pt] (8,0.5) -- (8,-0.5);
          \draw (8,0.5) -- (8,-0.5);
          \draw (9,0.5) -- (9,-0.5);
          \draw[white, line width=6pt] (10,0.5) -- (10,-0.5);
          \draw (10,0.5) -- (10,-0.5);

           \draw (11,0.5) -- (11,-0.5);
           \draw[white, line width=6pt] (12,0.5) -- (12,-0.5);
           \draw[dotted] (5.5,0) -- (7,0);
          
          \draw (12,0) -- (12.5,0);
          \draw (12.5,0) to[out=90 , in=180 ] (13.5,1.25);
          \draw (13.5,1.25) to[out=0 , in=90 ] (14.5, 0.4);
         
          \draw (13.5,0) to[out=180, in=180] (14.5,0.4);
        \draw[white, line width=3pt] (14,0.5) -- (14,-0.5);
          \draw (14,0.5) -- (14,-0.5);
           \draw (14.5,0) to[out=180, in=180] (13.5,0);
          \draw (14.2,0) -- (15,0);
          \draw (12,0.5) -- (12,-0.5);
         \draw (0,0) to[out=-100, in=-100 ] (14,-0.5);
          \draw (1,0.5) to[out=80 , in=100 ] (2,0.5);
        \draw (2,-0.5) to[out=-80 , in=-100 ] (3,-0.5);
         \draw (3,0.5) to[out=80 , in=100 ] (4,0.5);
        \draw (4,-0.5) to[out=-80 , in=-100 ] (5,-0.5);
         \draw (5,0.5) to[out=80 , in=100 ] (6,0.5);
         \draw (8,-0.5) to[out=-80 , in=-100 ] (9,-0.5);
         \draw (7,0.5) to[out=80 , in=100 ] (8,0.5);
         \draw (9,0.5) to[out=80 , in=100 ] (10,0.5);
         \draw (1,-0.5) to[out=-80 , in=-40 ] (15,0);
        \draw (13,-0.5) -- (13,0.5);
         \draw (13,0.5) to[out=80 , in=100 ] (14,0.5);
        \draw (10,-0.5) to[out=-80 , in=-100 ] (11,-0.5);
         \draw (11,0.5) to[out=80 , in=100 ] (12,0.5);
        \draw (12,-0.5) to[out=-80 , in=-120 ] (13,-0.5);
        
        \draw (11.5,-3.4) circle (0.5cm);
         \draw (14.5,0.2) -- (14.5,-0.2);
        \draw (1.3,1) -- (1.3,0.6);
        \draw (1.3,0.2) -- (1.3,-0.2);
         \draw (3,0) circle (0.2cm);
         \draw (5,0) circle (0.2cm);
         \draw (7,0) circle (0.2cm);
         \draw (9,0) circle (0.2cm);
         \draw (11,0) circle (0.2cm);
         \draw (14,0) circle (0.2cm);
         \draw (0.4,0.2) -- (0.6,0);
         \draw (0.6,0) -- (0.4,-0.2);
          \end{tikzpicture}
    \end{subfigure}
    \begin{subfigure}{}
          \begin{tikzpicture}[scale=0.46]
          \draw (1,0.5) -- (1,-0.5);
          \draw[white, line width=6pt] (0.9,0) -- (1.1,0);
           \draw (0,0) --(5.5,0);
           \draw (7,0) -- (7.5,0);
           \draw[white, line width=6pt] (2,0.5) -- (2,-0.5);
          \draw (2,0.5) -- (2,-0.5);
          \draw (3,0.5) -- (3,-0.5);
          \draw[white, line width=6pt] (4,0.5) -- (4,-0.5);
          \draw (4,0.5) -- (4,-0.5);
          \draw (5,0.5) -- (5,-0.5);
          \draw (7,0.5) -- (7,-0.5);
           \draw[dotted] (5.5,0) -- (7,0);
         
         \draw (0,0) to[out=-100, in=-60 ] (8,0);
          \draw (1,0.5) to[out=80 , in=100 ] (2,0.5);
        \draw (2,-0.5) to[out=-80 , in=-100 ] (3,-0.5);
         \draw (3,0.5) to[out=80 , in=100 ] (4,0.5);
        \draw (4,-0.5) to[out=-80 , in=-100 ] (5,-0.5);
         \draw (5,0.5) to[out=80 , in=100 ] (6,0.5);
         \draw (7,0.5) to[out=80 , in=100 ] (8,0);
         \draw (1,-0.5) to[out=-80 , in=-60 ] (7.5,0);
        \draw (1.3,1) -- (1.3,0.6);
        \draw (1.4,-1) -- (0.9,-1);
        \draw (1.3,0.2) -- (1.3,-0.2);
         \draw (3,0) circle (0.2cm);
         \draw (5,0) circle (0.2cm);
         \draw (7,0) circle (0.2cm);
         \draw (0.4,0.2) -- (0.6,0);
         \draw (0.6,0) -- (0.4,-0.2); 
         \node at (4.2,-2.5) {\tiny $D^2$};
          \end{tikzpicture}
    \end{subfigure}
      \begin{subfigure}{}
           \begin{tikzpicture}[scale=0.34]
          \draw (1,0.5) -- (1,-0.5);
          \draw[white, line width=6pt] (0.9,0) -- (1.1,0);
           \draw (0,0) --(5.5,0);
           \draw (7,0) -- (12,0);
           \draw[white, line width=6pt] (2,0.5) -- (2,-0.5);
          \draw (2,0.5) -- (2,-0.5);
          \draw (3,0.5) -- (3,-0.5);
          \draw[white, line width=6pt] (4,0.5) -- (4,-0.5);
          \draw (4,0.5) -- (4,-0.5);
          \draw (5,0.5) -- (5,-0.5);
          \draw (7,0.5) -- (7,-0.5);
           \draw[white, line width=6pt] (8,0.5) -- (8,-0.5);
          \draw (8,0.5) -- (8,-0.5);
          \draw (9,0.5) -- (9,-0.5);
          \draw[white, line width=6pt] (10,0.5) -- (10,-0.5);
          \draw (10,0.5) -- (10,-0.5);

           \draw (11,0.5) -- (11,-0.5);
           \draw[dotted] (5.5,0) -- (7,0);
          
         \draw (0,0) to[out=-100, in=-50 ] (13,0);
          \draw (1,0.5) to[out=80 , in=100 ] (2,0.5);
        \draw (2,-0.5) to[out=-80 , in=-100 ] (3,-0.5);
         \draw (3,0.5) to[out=80 , in=100 ] (4,0.5);
        \draw (4,-0.5) to[out=-80 , in=-100 ] (5,-0.5);
         \draw (5,0.5) to[out=80 , in=100 ] (6,0.5);
         \draw (8,-0.5) to[out=-80 , in=-100 ] (9,-0.5);
         \draw (7,0.5) to[out=80 , in=100 ] (8,0.5);
         \draw (9,0.5) to[out=80 , in=100 ] (10,0.5);
         \draw (1,-0.5) to[out=-80 , in=-40 ] (12,0);
        \draw (10,-0.5) to[out=-80 , in=-100 ] (11,-0.5);
         \draw (11,0.5) to[out=80 , in=120 ] (13,0);
        
        \draw (1.3,0.2) -- (1.3,-0.2);
        \draw (1.3,1) -- (1.3,0.6);
        \draw (1.4,-1) -- (0.9,-1);
         \draw (3,0) circle (0.2cm);
         \draw (5,0) circle (0.2cm);
         \draw (7,0) circle (0.2cm);
         \draw (9,0) circle (0.2cm);
         \draw (11,0) circle (0.2cm);
         \draw (0.4,0.2) -- (0.6,0);
         \draw (0.6,0) -- (0.4,-0.2);
         \draw (6.5,4) -- (6.5,2.5);
         \draw (6.2,2.8) -- (6.5,2.5);
         \draw (6.8,2.8) -- (6.5,2.5);
         \node at (7,3.7) {\tiny R2};
         \node at (7,3.1) {\tiny V2};
         \node at (6,3.4) {\tiny T1};
         \node at (6.9,-4) {\tiny $D^1$};
         \draw (-3.8,0) -- (-2.3,0);
         \draw (-3.8,0) -- (-3.5,0.3);
         \draw (-3.8,0) -- (-3.5,-0.3);
          \node at (-3,0.5) {\tiny $2^{nd}$ arc};
          \node at (-3,-0.5) {\tiny shift};
          \node at (-3,-1) {\tiny move};
          \draw (11.5,-0.4) -- (11.5,-1);
           \draw (9.5,-0.4) -- (9.5,-1);
           \draw (9.5,-1) -- (11.5,-1);
           \node at (10.5,-1.3) {\tiny 2};
          \end{tikzpicture}
      \end{subfigure}
      \begin{subfigure}{}
      \centering
          \begin{tikzpicture}[scale=0.6]
          \draw (1,0.5) -- (1,-0.5);
          \draw[white, line width=6pt] (0.9,0) -- (1.1,0);
           \draw (0,0) --(3.5,0);
           \draw[white, line width=6pt] (2,0.5) -- (2,-0.5);
          \draw (2,0.5) -- (2,-0.5);
          \draw (3,0.5) -- (3,-0.5);
          \draw[white, line width=6pt] (4,0.5) -- (4,-0.5);
          \draw (3.5,0) -- (3.5,-0.5);
         \draw (0,0) to[out=-100, in=-90 ] (4,-0.5);
         \draw (4,0.5) -- (4,-0.5);
          \draw (1,0.5) to[out=80 , in=100 ] (2,0.5);
        \draw (2,-0.5) to[out=-80 , in=-100 ] (3,-0.5);
         \draw (3,0.5) to[out=80 , in=100 ] (4,0.5);
         \draw (1,-0.5) to[out=-80 , in=-90 ] (3.5,-0.5);
        \draw (1.3,0.2) -- (1.3,-0.2);
         \draw (1.3,1) -- (1.3,0.6);
        \draw (1.2,-0.5) -- (0.8,-0.5);
         \draw (3,0) circle (0.2cm);
         \draw (0.4,0.2) -- (0.6,0);
         \draw (0.6,0) -- (0.4,-0.2); 
         \node at (2.2,-2) {\tiny $D^{n-1}$};
         \draw (-2.5,2) --(-2.5,1);
         \draw (-2.5,1) --(-1.5,0.5);
         \draw (-1.65,0.9) --(-1.5,0.5);
         \draw (-1.78,0.25) --(-1.5,0.5);
         \node at (-3.4, 1) {\tiny $(n-1)^{th}$ };
          \node at (-1.5, 1.5) {\tiny arc shift };
          \node at (-1.5, 1.1) {\tiny move };
          \node at (5.1,0.5) {\tiny $n^{th}$ arc};
           \node at (5.1,-0.5) {\tiny shift move};
          \draw(4.7,0)--(5.5,0);
          \draw(5.3,0.2)--(5.5,0);
          \draw(5.3,-0.2)--(5.5,0);
          \draw (7.5,0) circle (1cm);
          \draw (6.3,0)--(6.7,0);
          \end{tikzpicture}
           \label{fig:T*}
      \end{subfigure}
       \caption{Arc shift moves on $D_n$.}
       \label{fig:T}
       \end{figure}

\end{proof}

\begin{remark}
For the family of twisted knots $\{D_n\}_{n>1}$ defined in Proposition~\ref{prop:Dn}, the arc shift number is given by
\[
A(D_n)=n=\frac{1}{2}\left|\text{constant term of }\mathcal{B}_{D_n}(t)\right|.
\]
Thus, for this family, the arc shift number can be recovered directly from the $\mathcal{B}$-Index polynomial.
\end{remark}

\noindent Now we observe the behavior of the $\mathcal{B}$-Index polynomial under reflection and orientation reversing. 
 \begin{theorem} \label{th-mirror}
Let $D$ be an oriented twisted knot diagram, and let $D^*$ denote its mirror image. Then, we have 
$$
\mathcal{B}_{D^*}(t) = -\mathcal{B}_{D}(t). 
$$
\end{theorem}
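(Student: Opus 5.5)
The plan is to compare $D$ and $D^*$ crossing by crossing and show that each crossing contributes the negative of its former contribution. Every classical crossing $c$ of $D$ corresponds to a crossing $c^*$ of $D^*$ obtained by switching over and under strands. The orientation is unchanged, so $\operatorname{sgn}(c^*)=-\operatorname{sgn}(c)$. The whole argument therefore reduces to proving
\[
\operatorname{Ind}^1(c^*)+\operatorname{Ind}^2(c^*)=\operatorname{Ind}^1(c)+\operatorname{Ind}^2(c).
\]
Once this holds, each term $\operatorname{sgn}(c)(t^{\operatorname{Ind}^1(c)+\operatorname{Ind}^2(c)}-1)$ changes sign and $\mathcal{B}_{D^*}(t)=-\mathcal{B}_D(t)$ follows.

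First I will check how the smoothings compare. Smoothing along the orientation does not depend on which strand is over, so $D^*_{c^*}$ has the same underlying curve as $D_c$, with the same bars, arcs and consecutive sequences. It is $D_c$ with every remaining classical crossing switched. What changes is the placement of the base points. In $D$ the point $\alpha$ sits near the under-strand of $c$ and $\beta$ near the over-strand. In $D^*$ the over- and under-strands are exchanged, so $\alpha$ and $\beta$ are interchanged as labels of the two components of the smoothed diagram. I should double-check this against Fig.~\ref{fig:Fig1}, since the base points are placed relative to over/under rather than to the orientation. Granting it, the arc $C$ carrying $\alpha$ in $D^*_{c^*}$ is the arc $C'$ carrying $\beta$ in $D_c$, and vice versa.

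Next I apply Lemma~\ref{prop: RIII}. Because $D^*_{c^*}$ is obtained from $D_c$ by finitely many crossing changes, the signed sum $\sum_{\mathcal{O}_L}\operatorname{sgn}-\sum_{\mathcal{U}_L}\operatorname{sgn}$ computed from a fixed starting arc is the same in both diagrams. This gives
\[
\operatorname{Ind}^1(c^*)=\operatorname{Ind}^2(c)\quad\text{and}\quad \operatorname{Ind}^2(c^*)=\operatorname{Ind}^1(c).
\]
The sum $\operatorname{Ind}^1+\operatorname{Ind}^2$ is therefore preserved, and summing over $\mathcal{C}(D)$ completes the proof.

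The main obstacle is justifying the base-point swap carefully. Should the convention instead keep $\alpha$ on the same component, the argument still works: Lemma~\ref{prop: RIII} then gives $\operatorname{Ind}^j(c^*)=\operatorname{Ind}^j(c)$ directly, so the sum is preserved either way. The conclusion therefore rests only on $\operatorname{sgn}(c^*)=-\operatorname{sgn}(c)$ together with Lemma~\ref{prop: RIII} applied to each of the two starting arcs.
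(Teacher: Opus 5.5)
Your proof is correct and follows the same route as the paper. Both note that $D^*_{c^*}$ is $D_c$ with every remaining crossing switched and with the base points $\alpha,\beta$ exchanged. The sign flip and the over/under flip then cancel in each signed sum; the paper redoes this cancellation inline, while you cite Lemma~\ref{prop: RIII}. This gives $\operatorname{Ind}^1(c^*)=\operatorname{Ind}^2(c)$ and $\operatorname{Ind}^2(c^*)=\operatorname{Ind}^1(c)$, and with $\operatorname{sgn}(c^*)=-\operatorname{sgn}(c)$ you get $\mathcal{B}_{D^*}(t)=-\mathcal{B}_D(t)$.

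The base-point swap you flagged does hold under the convention of Fig.~\ref{fig:Fig1}: the incoming strand carrying $\alpha$ in $D$ becomes the over-strand in $D^*$, so it carries $\beta$ there. The starting arc on each component, the one through the smoothing site, is unchanged.
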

\begin{proof} Let $c$ be a classical crossing of $D$, and let $c^*$  denote the  corresponding crossing in $D^*$. Let $D_{c}$ and $D_{c^*}^{*}$ be the diagrams obtained from $D$ and $D^*$, respectively, by smoothing the crossings $c$ and $c^*$. Then the base point on the first component of $D_{c^*}^{*}$ (respectively, second component) corresponds to the base point on the second component (respectively, on the first component) of $D_{c}$, as illustrated in Fig.~\ref{fig:mirror}. 
Note that $D_{c}$ and $D_{c^*}^{*}$ are mirror images of each other and differ only in the placement of the base points. The arc $C$ containing the base point $\alpha$ in $D_c$ corresponds to the arc containing the base point $\beta$ in $D_{c^*}^{*}$, as shown in Fig.~\ref{fig:mirror}.

\begin{figure}[htbp]
    \centering
    \begin{subfigure}
    \centering
    \begin{tikzpicture}
    \draw (-0.5,0.5) -- (0.5,-0.5);
    \draw[white, line width=5pt] (-0.5,-0.5) -- (0.5,0.5);
     \draw (-0.5,-0.5) -- (0.5,0.5);
       \node at (0.5,0.5) [
  fill=black,
  regular polygon,
  regular polygon sides=3,
  rotate=70,
   scale=0.3
] {};
\node at (-0.5,0.5) [
  fill=black,
  regular polygon,
  regular polygon sides=3,
  rotate=50,
   scale=0.3
] {};
     \draw (1.2,0) -- (2,0);
     \draw (1.9,0.1) -- (2,0);
     \draw (1.9,-0.1) -- (2,0);
     \draw (2.5,0.5) .. controls (2.9,0) .. (2.5,-0.5);
     \draw (3.5,0.5) .. controls (3.1,0) .. (3.5,-0.5);
      \node at (2.5,0.5) [
  fill=black,
  regular polygon,
  regular polygon sides=3,
  rotate=45,
   scale=0.3
] {};
      \node at (3.5,0.5) [
  fill=black,
  regular polygon,
  regular polygon sides=3,
  rotate=-45,
   scale=0.3
] {};
      \fill (2.6,-0.4) circle (2pt) node[ right ] {\footnotesize $\beta$};
      \fill (3.42,-0.4) circle (2pt) node[left ] {\footnotesize $\alpha$};
      \node at (0,-0.25) {\footnotesize $c$};
      \node at (0,-0.85) {\footnotesize $D$};
      \node at (3,-0.85) {\footnotesize $D_{c}$};
      \node at (2.5,0.125) {\footnotesize $C'$};
       \node at (3.5,0.125) {\footnotesize $C$};
       
\end{tikzpicture}
\end{subfigure}\hspace{1cm}
\begin{subfigure}
    \centering
    \begin{tikzpicture}
     \draw (-0.5,-0.5) -- (0.5,0.5);
     \draw[white, line width=5pt] (-0.5,0.5) -- (0.5,-0.5);
     \draw (-0.5,0.5) -- (0.5,-0.5);
       \node at (0.5,0.5) [
  fill=black,
  regular polygon,
  regular polygon sides=3,
  rotate=70,
   scale=0.3
] {};
\node at (-0.5,0.5) [
  fill=black,
  regular polygon,
  regular polygon sides=3,
  rotate=50,
   scale=0.3
] {};
     \draw (1.2,0) -- (2,0);
     \draw (1.9,0.1) -- (2,0);
     \draw (1.9,-0.1) -- (2,0);
     \draw (2.5,0.5) .. controls (2.9,0) .. (2.5,-0.5);
     \draw (3.5,0.5) .. controls (3.1,0) .. (3.5,-0.5);
      \node at (2.5,0.5) [
  fill=black,
  regular polygon,
  regular polygon sides=3,
  rotate=45,
   scale=0.3
] {};
      \node at (3.5,0.5) [
  fill=black,
  regular polygon,
  regular polygon sides=3,
  rotate=-45,
   scale=0.3
] {};
      \fill (2.6,-0.4) circle (2pt) node[ right ] {\footnotesize $\alpha$};
      \fill (3.42,-0.4) circle (2pt) node[left ] {\footnotesize $\beta$};
      \node at (0,-0.25) {\footnotesize $c^*$};
      \node at (0,-0.85) {\footnotesize $D^*$};
      \node at (3,-0.85) {\footnotesize $D^{*}_{c^*}$};
      \node at (2.5,0.125) {\footnotesize $C'$};
       \node at (3.5,0.125) {\footnotesize $C$};
       
\end{tikzpicture}
\end{subfigure}
    \caption{Diagrams $D$, $D_c$, $D^*$, and $D^{*}_{c^*}$.}
    \label{fig:mirror}
\end{figure}

\noindent If $c_i^{*}$ be the crossing in $D_{c^*}^{*}$ corresponding to the crossing $c_i$ of $D_{c}$. Then, 
 \[\operatorname{sgn}(c_{i}^{*})=-\operatorname{sgn}(c_{i}), \quad \text{ and}\]
\[\mathcal{U}_{L}(D^*_{c^*}:C)=\{c_{i}^{*}\in D^{*}_{c^*}~ | \quad c_{i}\in \mathcal{O}_{L}(D_c:C)\}, \quad \text{and}\] 
\[\mathcal{O}_{L}(D^*_{c^*}:C)=\{c_{i}^{*}\in D^{*}_{c^*}~ | \quad c_{i}\in \mathcal{U}_{L}(D_c:C)\}.\] 
Therefore, we have 
 \begin{align*}
\operatorname{Ind}^1(c^*)&=\displaystyle \sum_{c'\in \mathcal{O}_{L}(D^{*}_{c^*}:C) }\operatorname{sgn}(c')- \displaystyle \sum_{c'\in \mathcal{U}_{L}(D^{*}_{c^*}:C) }\operatorname{sgn}(c'),\\
&= \displaystyle \sum_{c\in \mathcal{U}_{L}(D_{c}:C) }(-\operatorname{sgn}(c))- \displaystyle \sum_{c\in \mathcal{O}_{L}(D_{c}:C) }(-\operatorname{sgn}(c)),\\
&=\operatorname{Ind}^2(c).
\end{align*}
Similarly, we can show that $\operatorname{Ind}^2(c^*)=\operatorname{Ind}^1(c)$. Hence, the $\mathcal{B}$-Index polynomial of $D^*$ is given by
\begin{align*}
\mathcal{B}_{D^*}(t)&=\displaystyle \sum_{c^*\in \mathcal{C}(D^{*})}\operatorname{sgn}(c^*)(t^{\operatorname{Ind}^1(c^*)+\operatorname{Ind}^2(c^*)}-1),\\
&=  \displaystyle \sum_{c\in \mathcal{C}(D)}(-\operatorname{sgn}(c))(t^{\operatorname{Ind}^2(c)+\operatorname{Ind}^1(c)}-1)= - \mathcal{B}_{D}(t).
\end{align*}
Hence, proved the desired result.
\end{proof}

 \begin{theorem} \label{th-rev_orient}
Let $D$ be an oriented twisted knot diagram. If, for every classical crossing $c$ of $D$, each component of the smoothed diagram $D_c$ contains either exactly one arc or an even number of arcs, then
\[
\mathcal{B}_{\overline{D}}(t)=\mathcal{B}_{D}(t),
\]
where $\overline{D}$ denotes the diagram obtained from $D$ by reversing its orientation. 
\end{theorem}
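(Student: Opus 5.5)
The plan is to compare $D$ and $\overline{D}$ crossing by crossing, and to show that every term $\operatorname{sgn}(c)\bigl(t^{\operatorname{Ind}^1(c)+\operatorname{Ind}^2(c)}-1\bigr)$ is unchanged. Let $\overline{c}$ be the crossing of $\overline{D}$ corresponding to a classical crossing $c$ of $D$.

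\textbf{Signs and the smoothed diagram.} Reversing the orientation reverses both strands at $c$, so $\operatorname{sgn}(\overline{c})=\operatorname{sgn}(c)$. The oriented smoothing at $\overline{c}$ is the same unoriented resolution as at $c$. Hence $\overline{D}_{\overline{c}}$ is $D_c$ with the orientation of both components reversed. For every linking crossing $x$ of $D_c$, both strands through $x$ are reversed, so $\operatorname{sgn}(x)$ is unchanged. Reversing a strand does not change whether it passes over or under at $x$, and the arcs (segments between bars) are the same point sets. So for each arc $A$, the sets $\mathcal{O}(A)$ and $\mathcal{U}(A)$ are the same in $\overline{D}_{\overline{c}}$ as in $D_c$.

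\textbf{Base points.} Next I will check where the base points $\alpha$ and $\beta$ go. Reversal changes which strand is entered first, so the under- and over-strand labels may be interchanged between the two components. Thus $\operatorname{Ind}^1(\overline{c})$ and $\operatorname{Ind}^2(\overline{c})$ are computed on the components carrying $\beta$ and $\alpha$, possibly in swapped order. Since only the sum $\operatorname{Ind}^1+\operatorname{Ind}^2$ enters $\mathcal{B}$, a swap is harmless. It therefore suffices to show that, for each component with arcs $\langle C_1,\dots,C_n\rangle$ starting at the base-point arc $C_1$, the quantity $\sum_{\mathcal{O}_L}\operatorname{sgn}-\sum_{\mathcal{U}_L}\operatorname{sgn}$ is the same for both orientations. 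Here I must confirm that the base point still lies on $C_1$ after reversal. It lies on the arc through the smoothing site, and this arc is unchanged.

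\textbf{Reversed traversal and parity.} In the reversed orientation the consecutive sequence from $C_1$ is $\langle C_1,C_n,C_{n-1},\dots,C_2\rangle$. So $C_k$, for $k\ge 2$, sits in position $n+2-k$. In $D_c$, $C_k$ contributes $\mathcal{O}(C_k)$ to $\mathcal{O}_L$ when $k$ is odd and $\mathcal{U}(C_k)$ when $k$ is even. In the reversed diagram the rule is applied to the position $n+2-k$. If $n$ is even, $n+2-k\equiv k \pmod 2$, so each arc makes the same contribution and the sets $\mathcal{O}_L,\mathcal{U}_L$ coincide. If $n=1$, the sequence is just $\langle C_1\rangle$ and nothing changes. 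By hypothesis these are the only cases, so every component yields the same signed sum. This gives $\operatorname{Ind}^1(\overline{c})+\operatorname{Ind}^2(\overline{c})=\operatorname{Ind}^1(c)+\operatorname{Ind}^2(c)$, and summing over $c$ gives $\mathcal{B}_{\overline{D}}(t)=\mathcal{B}_D(t)$.

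\textbf{Main obstacle.} The delicate step is the bookkeeping of base points and starting arcs under reversal. One must make sure that the arc containing $\alpha$ (or $\beta$) after reversal is still the arc on which the alternation starts, rather than its neighbour across a bar. If that happens, the alternation starts one step later, which shifts the parity for every arc. I will settle this first from Fig.~\ref{fig:Fig1}: the base points are placed just past the smoothing site, inside the arc through that site, and this arc is unaffected by reversal.
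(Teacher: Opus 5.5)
Your proposal is correct and follows essentially the same route as the paper: signs and over/under data are unchanged by reversal, the consecutive-arc sequence becomes $\langle C_1,C_n,\ldots,C_2\rangle$, so the parity hypothesis ($n=1$ or $n$ even) leaves $\mathcal{O}_L$ and $\mathcal{U}_L$ fixed, and the interchange of $\alpha$ and $\beta$ between the two components is harmless because only $\operatorname{Ind}^1+\operatorname{Ind}^2$ enters $\mathcal{B}_D(t)$. Two minor corrections, neither of which affects your argument since only the arc through the smoothing site matters: in Fig.~\ref{fig:Fig1}(a) the base points sit on the incoming strands just before the smoothing site, not just past it, and the swap of base points between components always occurs rather than only possibly.
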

\begin{proof}
Let $c$ be a classical crossing of $D$, and let $D_c$ be the diagram obtained from $D$ by smoothing $c$ along the orientation. By hypothesis, each component of $D_c$ contains either exactly one arc or an even number of arcs. Let $\overline{D}$ be the diagram obtained from $D$ by reversing its orientation, and let $\overline{c}$ denote the crossing of $\overline{D}$ corresponding to $c$. Further, let $\overline{D}_{\overline{c}}$ be the diagram obtained from $\overline{D}$ by smoothing the crossing $\overline{c}$ along the orientation.\\

\noindent Note that $D_c$ and $\overline{D}_{\overline{c}}$ are reverses of each other, differing only in the placement of their base points. More precisely, the base points on the first and second components are interchanged between $D_c$ and $\overline{D}_{\overline{c}}$, as shown in Fig.~\ref{fig:inv}.

\begin{figure}[htbp]
    \centering
    \begin{subfigure}
    \centering
    \begin{tikzpicture}
    \draw (-0.5,0.5) -- (0.5,-0.5);
    \draw[white, line width=5pt] (-0.5,-0.5) -- (0.5,0.5);
     \draw (-0.5,-0.5) -- (0.5,0.5);
       \node at (0.5,0.5) [
  fill=black,
  regular polygon,
  regular polygon sides=3,
  rotate=70,
   scale=0.3
] {};
\node at (-0.5,0.5) [
  fill=black,
  regular polygon,
  regular polygon sides=3,
  rotate=50,
   scale=0.3
] {};
     \draw (1.2,0) -- (2,0);
     \draw (1.9,0.1) -- (2,0);
     \draw (1.9,-0.1) -- (2,0);
     \draw (2.5,0.5) .. controls (2.9,0) .. (2.5,-0.5);
     \draw (3.5,0.5) .. controls (3.1,0) .. (3.5,-0.5);
      \node at (2.5,0.5) [
  fill=black,
  regular polygon,
  regular polygon sides=3,
  rotate=45,
   scale=0.3
] {};
      \node at (3.5,0.5) [
  fill=black,
  regular polygon,
  regular polygon sides=3,
  rotate=-45,
   scale=0.3
] {};
      \fill (2.6,-0.4) circle (2pt) node[ right ] {\footnotesize $\beta$};
      \fill (3.42,-0.4) circle (2pt) node[left ] {\footnotesize $\alpha$};
      \node at (0,-0.25) {\footnotesize $c$};
      \node at (0,-0.85) {\footnotesize $D$};
      \node at (3,-0.85) {\footnotesize $D_c$};
      \node at (2.5,0.125) {\footnotesize $C'$};
       \node at (3.5,0.125) {\footnotesize $C$};
       
\end{tikzpicture}
\end{subfigure}\hspace{1cm}
\begin{subfigure}
\centering
    \begin{tikzpicture}
    \draw (-0.5,0.5) -- (0.5,-0.5);
    \draw[white, line width=5pt] (-0.5,-0.5) -- (0.5,0.5);
     \draw (-0.5,-0.5) -- (0.5,0.5);
       \node at (-0.5,-0.5) [
  fill=black,
  regular polygon,
  regular polygon sides=3,
  rotate=10,
   scale=0.3
] {};
\node at (0.5,-0.5) [
  fill=black,
  regular polygon,
  regular polygon sides=3,
  rotate=-10,
   scale=0.3
] {};
     \draw (1.2,0) -- (2,0);
     \draw (1.9,0.1) -- (2,0);
     \draw (1.9,-0.1) -- (2,0);
     \draw (2.5,0.5) .. controls (2.9,0) .. (2.5,-0.5);
     \draw (3.5,0.5) .. controls (3.1,0) .. (3.5,-0.5);
      \node at (2.5,-0.5) [
  fill=black,
  regular polygon,
  regular polygon sides=3,
  rotate=30,
   scale=0.3
] {};
      \node at (3.5,-0.5) [
  fill=black,
  regular polygon,
  regular polygon sides=3,
  rotate=-30,
   scale=0.3
] {};
      \fill (2.6,0.4) circle (2pt) node[ right ] {\footnotesize $\alpha$};
      \fill (3.42,0.4) circle (2pt) node[left ] {\footnotesize $\beta$};
      \node at (0,-0.25) {\footnotesize $\bar{c}$};
      \node at (0,-0.85) {\footnotesize $\overline{D}$};
      \node at (3,-0.85) {\footnotesize $\overline{D}_{\overline{c}}$};
      \node at (2.5,-0.125) {\footnotesize $C'$};
       \node at (3.5,-0.125) {\footnotesize $C$};
       
\end{tikzpicture}
\end{subfigure}
    \caption{Diagrams $D$, $D_c$, $\overline{D}$, and $\overline{D}_{\overline{c}}$.}
    \label{fig:inv}
\end{figure}

\noindent Let $C$ be the arc shown in Fig.~\ref{fig:inv}, and let
\[
\langle C=C_1, C_2, \ldots, C_n \rangle
\]
denote the sequence of consecutive arcs in the first component of $D_c$ starting from $C$. Since this component contains either exactly one arc or an even number of arcs, it follows that $n=1$ or $n$ is even.
Because the orientation of $\overline{D}_{\overline{c}}$ is the reverse of that of $D_c$, the corresponding sequence of consecutive arcs in the first component of $\overline{D}_{\overline{c}}$ starting from $C$ is
\[
\langle C=C_1, C_n, C_{n-1}, \ldots, C_2 \rangle.
\]

\noindent Let $\overline{c}_i$ denote the crossing in $\overline{D}_{\overline{c}}$ corresponding to the crossing $c_i$ in $D_c$. Then
\[
\operatorname{sgn}(\overline{c}_i)=\operatorname{sgn}(c_i),
\]
and
\[\mathcal{U}_{L}(\overline{D}_{\overline{c}}:C)=
\{\overline{c}_i\in \overline{D}_{\overline{c}}~ | \quad c_{i}\in \mathcal{U}_{L}(D_c:C)\}, \quad \text{and}\] 
\[\mathcal{O}_{L}(\overline{D}_{\overline{c}}:C)=\{\overline{c}_i\in \overline{D}_{\overline{c}}~ | \quad c_{i}\in \mathcal{O}_{L}(D_c:C)\}.\] 
Since the arc $C$, depicted in Fig.~\ref{fig:inv}, contains the base point $\alpha$ in $D_c$ and the base point $\beta$ in $\overline{D}_{\overline{c}}$, it follows that 
 \begin{align*}
\operatorname{Ind}^1(\overline{c})&=\displaystyle \sum_{c'\in \mathcal{O}_{L}(\overline{D}_{\overline{c}}:C) }\operatorname{sgn}(c')- \displaystyle \sum_{c'\in \mathcal{U}_{L}(\overline{D}_{\overline{c}}:C) }\operatorname{sgn}(c')\\
&= \displaystyle \sum_{c\in \mathcal{O}_{L}(D_{c}:C) }\operatorname{sgn}(c)- \displaystyle \sum_{c\in \mathcal{U}_{L}(D_{c}:C) }\operatorname{sgn}(c)\\
&=\operatorname{Ind}^2(c).
\end{align*}
Similarly, we can show that $\operatorname{Ind}^2(\overline{c})=\operatorname{Ind}^1(c)$. Hence, the $\mathcal{B}$-Index polynomial of $\overline{D}$ is given by
\begin{align*}
\mathcal{B}_{\overline{D}}(t)&=\displaystyle \sum_{\overline{c}\in \mathcal{C}(\overline{D})}\operatorname{sgn}(\overline{c})(t^{\operatorname{Ind}^1(\overline{c})+\operatorname{Ind}^2(\overline{c})}-1)\\
&=  \displaystyle \sum_{c\in \mathcal{C}(D)}\operatorname{sgn}(c)(t^{\operatorname{Ind}^2(c)+\operatorname{Ind}^1(c)}-1)= \mathcal{B}_{D}(t).
\end{align*}
Hence, proved the desired result.
  \end{proof}


\noindent If an oriented twisted knot diagram fails to satisfy the conditions of Theorem~\ref{th-rev_orient}, the $B$-index polynomial of the diagram and its orientation reversal are not necessarily identical. We illustrate this non-equivalence, where $\mathcal{B}_{D}(t)\neq \mathcal{B}_{\overline{D}}(t)$, in the following example.

  \begin{figure}[htbp]
       \centering
       \begin{subfigure}{}
           \begin{tikzpicture}[scale=0.6]
           \draw (1,1) .. controls (2.3,3) and (3.3,3) .. (1.5,0); 
                \draw[white , line width=4pt] (2.3,1.5)-- (2.4,1.8);
           \draw (0,1) .. controls (-1,2) and (3,3) .. (3,0)
      .. controls (3,-2) and (-1,-2) .. (0,0);
           \draw (0,0)--(1,1);
               \draw[white, line width=5pt] (0,1) -- (1,0);
                \draw (0,1) -- (1,0);
                \draw (1,0) -- (2,-1);
                \draw[white, line width=5pt] (1,-0.5) -- (1.5,0);
                \draw (1,-0.5) -- (1.5,0);
                \draw[white, line width=4pt] (1.6,1.97) -- (1.8,1.95);
                \draw (1.87,2.1) -- (1.63,1.85);
                \draw (1,-0.5) .. controls (-0.5,-2.5) and (3,-2.5) .. (2,-1);
                \draw[white, line width=5pt] (0.5,-1.4) -- (0.8,-1.5);
                \draw (0.5,-1.4) -- (0.85,-1.49);
                \draw (2.15,-1.25) circle (4pt); 
                \draw (2.8,0)--(3.2,0);
                \draw (-0.25,-0.2) -- (0.15,-0.2);
                \draw (1.95,1.6) -- (2.1,1.9);
                 \node at (1.5,-1.455) [
  fill=black,
  regular polygon,
  regular polygon sides=3,
  rotate=47,
   scale=0.3
] {};
\node at (1.5,-3) {$(a)D$};
           \end{tikzpicture}
       \end{subfigure}
       \begin{subfigure}{}
           \begin{tikzpicture}[scale=0.6]
           \draw (1,1) .. controls (2.3,3) and (3.3,3) .. (1.5,0); 
                \draw[white , line width=4pt] (2.3,1.5)-- (2.4,1.8);
           \draw (0,1) .. controls (-1,2) and (3,3) .. (3,0)
      .. controls (3,-2) and (-1,-2) .. (0,0);
           \draw (0,0)--(1,1);
               \draw[white, line width=5pt] (0,1) -- (1,0);
                \draw (0,1) -- (1,0);
                \draw (1,0) -- (2,-1);
                \draw[white, line width=5pt] (1,-0.5) -- (1.5,0);
                \draw (1,-0.5) -- (1.5,0);
                \draw[white, line width=4pt] (1.6,1.97) -- (1.8,1.95);
                \draw (1.87,2.1) -- (1.63,1.85);
                \draw (1,-0.5) .. controls (-0.5,-2.5) and (3,-2.5) .. (2,-1);
                \draw[white, line width=5pt] (0.5,-1.4) -- (0.8,-1.5);
                \draw (0.5,-1.4) -- (0.85,-1.49);
                \draw (2.15,-1.25) circle (4pt); 
                \draw (2.8,0)--(3.2,0);
                \draw (-0.25,-0.2) -- (0.15,-0.2);
                \draw (1.95,1.6) -- (2.1,1.9);
                 \node at (1.5,-1.455) [
  fill=black,
  regular polygon,
  regular polygon sides=3,
  rotate=85,
   scale=0.3
] {};
\node at (1.5,-3) {$(b)\overline{D}$};
           \end{tikzpicture}
       \end{subfigure}
         \caption{Diagrams $D$ and $\overline{D}$.}
     \label{fig:expmirror}
   \end{figure}
\vspace{-0.5cm}
\begin{example} Consider a twisted knot diagram $D$ with three bars, as shown in Fig.~\ref{fig:expmirror}$(a)$. Let $\overline{D}$ denote the reverse of $D$, and is illustrated in Fig.~\ref{fig:expmirror}$(b)$. The $\mathcal{B}$-Index polynomials of $D$, and $\overline{D}$ are given by
   \begin{align*}
      \mathcal{B}_{D}(t)=&(t^2-1)-4(t^{-1}-1),\\
      \mathcal{B}_{\overline{D}}(t)=&(t^{2}-1)-(t^{-2}-1)-2(t^{2}-1).
   \end{align*}
   Therefore, it follows that 
   \[\mathcal{B}_{\overline{D}}(t)\neq \mathcal{B}_{D}(t).\]
   \end{example}

\section{Cosmetic crossing change conjecture}
\label{sec:cs}
We define a crossing in a twisted knot diagram as \emph{nugatory} if it can be eliminated by twisting a portion of the diagram, see Fig.~\ref{fig20new}.  A classic example of a nugatory crossing is one that can be undone via an RI-move. Consequently, performing a crossing change operation at a nugatory crossing of $D$ yields a diagram that remains equivalent to the original diagram $D$. Fig.~\ref{fig20new} shows a general form for a nugatory crossing. 
\begin{figure}[!ht]
\centering 
\unitlength=0.6mm
\begin{picture}(0,20)(0,10)
\thicklines
\qbezier(-30,10)(-30,10)(-30,30)
\qbezier(-30,10)(-30,10)(-10,10) 
\qbezier(-10,10)(-10,10)(-10,30)
\qbezier(-30,30)(-30,30)(-10,30)
\put(-20,20){\makebox(0,0)[cc]{$K$}}
\qbezier(30,10)(30,10)(30,30)
\qbezier(30,10)(30,10)(10,10) 
\qbezier(10,10)(10,10)(10,30)
\qbezier(30,30)(30,30)(10,30)
\put(20,20){\makebox(0,0)[cc]{$K'$}}
\qbezier(-10,15)(-10,15)(10,25)
\qbezier(-10,25)(-10,25)(-2,21)
\qbezier(10,15)(10,15)(2,19)
\put(-0.5,24){$c$}
\end{picture}
\caption{Nugatory crossing $c$.} \label{fig20new}
\end{figure}

\begin{definition}
A crossing change in a twisted knot diagram $D$ is said to be \emph{cosmetic} if the new diagram, say $D'$, is isotopic  to $D$.  
\end{definition}

\noindent A crossing change on a nugatory crossing is called a \emph{trivial cosmetic crossing change}.  The following question is still open.

\begin{question} \cite[Problem~1.58]{kirby2problems} 
Do non-trivial cosmetic crossing change exist?
\end{question}

\noindent For virtual knots, this question has been answered in the negative for a wide class of knots.  In \cite[p.~15]{folwaczny2013linking}, L.~Folwaczny and L.~Kauffman proved that a crossing  $c$ in $D$ with $\operatorname{Ind}(c) \neq 0$ is not cosmetic. We explore this question for twisted knots and establish a condition in Theorem~\ref{thm:cosmetic} for a crossing not to be cosmetic.
   
\begin{lemma} \label{lem:cosmetic}
Let $D'$  be the diagram obtained from twisted knot diagram $D$ by switching the crossing $c$, and let $c'$ denote the corresponding crossing in  $D'$. Then 
\[\operatorname{Ind}^1 (c') =  \operatorname{Ind}^2 (c), \quad \text{and} \quad \operatorname{Ind}^2 (c') =  \operatorname{Ind}^1 (c).\]
\end{lemma}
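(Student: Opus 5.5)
The plan is to compare the smoothed diagrams $D_c$ and $D'_{c'}$ directly. The key observation is that smoothing a crossing along the orientation, as in Fig.~\ref{fig:Fig1}(b), does not depend on which strand is over and which is under. Hence, as oriented two-component twisted link diagrams, $D_c$ and $D'_{c'}$ coincide: every other classical crossing, virtual crossing and bar is untouched by the switch. In particular, every classical crossing $c_i$ of $D_c$ corresponds to the same crossing of $D'_{c'}$, with the same sign and the same over/under status relative to each arc. Therefore, for any arc $A$ of this common diagram,
\[
\mathcal{O}_L(D_c:A)=\mathcal{O}_L(D'_{c'}:A)\quad\text{and}\quad \mathcal{U}_L(D_c:A)=\mathcal{U}_L(D'_{c'}:A).
\]
This holds because the sets are built only from the consecutive sequence of arcs starting at $A$ and from the local over/under data at linking crossings, none of which changes. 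I would state this first as a small observation, possibly noting that it is the trivial case (no crossings switched) of the bookkeeping in Lemma~\ref{prop: RIII}.

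The only thing that changes is the placement of the base points. By the convention of Fig.~\ref{fig:Fig1}(a), $\alpha$ is marked on the under strand of $c^*$ and $\beta$ on the over strand. The two base points sit in fixed positions near the crossing, on the two incoming strands just before the smoothing. Switching $c$ to $c'$ exchanges which incoming strand is the under strand. Hence in $D'_{c'}$ the point $\alpha$ lies on the strand where $\beta$ lay in $D_c$, and conversely. Consequently, the arc $C$ containing $\alpha$ in $D_c$ is the arc containing $\beta$ in $D'_{c'}$, and the arc $C'$ containing $\beta$ in $D_c$ is the arc containing $\alpha$ in $D'_{c'}$; the base points are simply interchanged between the two components. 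This is the same phenomenon used for TIII in the proof of Theorem~\ref{thm-main} and for mirror images in Theorem~\ref{th-mirror}, except that here no signs change.

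Combining the two steps, $\operatorname{Ind}^1(c')$ is computed from the consecutive sequence starting at $C'$ in $D'_{c'}=D_c$, which is exactly the computation of $\operatorname{Ind}^2(c)$. Thus
\[
\operatorname{Ind}^1(c')=\sum_{x\in\mathcal{O}_L(D_c:C')}\operatorname{sgn}(x)-\sum_{x\in\mathcal{U}_L(D_c:C')}\operatorname{sgn}(x)=\operatorname{Ind}^2(c),
\]
and symmetrically $\operatorname{Ind}^2(c')=\operatorname{Ind}^1(c)$.

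The step I expect to need the most care is the second one. There one must check that the relocated base point lies on the same arc (in the bar-delimited sense of Section~\ref{sec:pol}) as the old base point on that component, so that the consecutive sequence starts at the same arc and has the same parity pattern. I would handle this by noting that both base points are placed arbitrarily close to the smoothing point on the incoming side. Then no bar separates the old and new positions on a given component, and the starting arc, hence the whole sequence $\langle C_1,\dots,C_n\rangle$, is the same.
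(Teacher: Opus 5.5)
Your proposal is correct and takes essentially the same route as the paper. Smoothing ignores over/under data, so $D_c$ and $D'_{c'}$ are the same oriented two-component diagram with identical $\mathcal{O}_L$ and $\mathcal{U}_L$ sets; the switch only interchanges which component carries $\alpha$ and which carries $\beta$, which gives $\operatorname{Ind}^1(c')=\operatorname{Ind}^2(c)$ and $\operatorname{Ind}^2(c')=\operatorname{Ind}^1(c)$. Your added check that the relocated base point stays on the same bar-delimited arc is left implicit in the paper, and you handle it correctly.
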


\begin{proof}
Let $D'$ be the diagram obtained from $D$ by applying the crossing change operation at $c$, and let $c'$ denote the corresponding crossing in $D'$, as shown in Fig.~\ref{fig:cro change}.  Then the diagrams $D_c$ and $D'_{c'}$ represent the same diagram, differing only by a reversal of the ordering of their components, see Fig.~\ref{fig:cro change}.
\begin{figure}[htbp]
    \centering
    \begin{subfigure}
    \centering
    \begin{tikzpicture}
    \draw (-0.5,0.5) -- (0.5,-0.5);
    \draw[white, line width=5pt] (-0.5,-0.5) -- (0.5,0.5);
     \draw (-0.5,-0.5) -- (0.5,0.5);
       \node at (0.5,0.5) [
  fill=black,
  regular polygon,
  regular polygon sides=3,
  rotate=70,
   scale=0.3
] {};
\node at (-0.5,0.5) [
  fill=black,
  regular polygon,
  regular polygon sides=3,
  rotate=50,
   scale=0.3
] {};
     \draw (1.2,0) -- (2,0);
     \draw (1.9,0.1) -- (2,0);
     \draw (1.9,-0.1) -- (2,0);
     \draw (2.5,0.5) .. controls (2.9,0) .. (2.5,-0.5);
     \draw (3.5,0.5) .. controls (3.1,0) .. (3.5,-0.5);
      \node at (2.5,0.5) [
  fill=black,
  regular polygon,
  regular polygon sides=3,
  rotate=45,
   scale=0.3
] {};
      \node at (3.5,0.5) [
  fill=black,
  regular polygon,
  regular polygon sides=3,
  rotate=-45,
   scale=0.3
] {};
      \fill (2.6,-0.4) circle (2pt) node[ right ] {\footnotesize $\beta$};
      \fill (3.42,-0.4) circle (2pt) node[left ] {\footnotesize $\alpha$};
      \node at (0,-0.25) {\footnotesize $c$};
      \node at (0,-0.85) {\footnotesize $D$};
      \node at (3,-0.85) {\footnotesize $D_{c}$};
      \node at (2.5,0.125) {\footnotesize $C'$};
       \node at (3.5,0.125) {\footnotesize $C$};
       
\end{tikzpicture}
\end{subfigure}\hspace{1cm}
\begin{subfigure}
    \centering
    \begin{tikzpicture}
     \draw (-0.5,-0.5) -- (0.5,0.5);
     \draw[white, line width=5pt] (-0.5,0.5) -- (0.5,-0.5);
     \draw (-0.5,0.5) -- (0.5,-0.5);
       \node at (0.5,0.5) [
  fill=black,
  regular polygon,
  regular polygon sides=3,
  rotate=70,
   scale=0.3
] {};
\node at (-0.5,0.5) [
  fill=black,
  regular polygon,
  regular polygon sides=3,
  rotate=50,
   scale=0.3
] {};
     \draw (1.2,0) -- (2,0);
     \draw (1.9,0.1) -- (2,0);
     \draw (1.9,-0.1) -- (2,0);
     \draw (2.5,0.5) .. controls (2.9,0) .. (2.5,-0.5);
     \draw (3.5,0.5) .. controls (3.1,0) .. (3.5,-0.5);
      \node at (2.5,0.5) [
  fill=black,
  regular polygon,
  regular polygon sides=3,
  rotate=45,
   scale=0.3
] {};
      \node at (3.5,0.5) [
  fill=black,
  regular polygon,
  regular polygon sides=3,
  rotate=-45,
   scale=0.3
] {};
      \fill (2.6,-0.4) circle (2pt) node[ right ] {\footnotesize $\alpha$};
      \fill (3.42,-0.4) circle (2pt) node[left ] {\footnotesize $\beta$};
      \node at (0,-0.25) {\footnotesize $c'$};
      \node at (0,-0.85) {\footnotesize $D'$};
      \node at (3,-0.85) {\footnotesize $D^{'}_{c'}$};
      \node at (2.5,0.125) {\footnotesize $C'$};
       \node at (3.5,0.125) {\footnotesize $C$};
       
\end{tikzpicture}
\end{subfigure}
    \caption{Diagrams $D$, $D_c$, $D'$, and $D^{'}_{c'}$.}
    \label{fig:cro change}
\end{figure}
 If $C$ and $C'$ are the two arcs in both $D_c$ and $D'_{c'}$, as illustrated in Fig.~\ref{fig:cro change}.  Then,
\[\mathcal{O}_{L}(D_c:C)=\mathcal{O}_{L}(D'_{c'}:C),\quad \mathcal{U}_{L}(D_{c}:C)=\mathcal{U}_{L}(D'_{c'}:C),\]
\[ \mathcal{O}_{L}(D_{c}:C')=\mathcal{O}_{L}(D'_{c'}:C') \quad \text{and} \quad  \mathcal{U}_{L}(D_c:C')=\mathcal{U}_{L}(D'_{c'}:C').\]
Since in $D_c$ and $D'_{c'}$ the  base point $\alpha$ lies on the first component and is contained in the arcs  $C$ and $C'$, respectively, it follows that
\begin{align*}
\operatorname{Ind}^1(c')&=\displaystyle \sum_{c\in \mathcal{O}_{L}(D_{c'}:C') }\operatorname{sgn}(c)- \displaystyle \sum_{c\in \mathcal{U}_{L}(D_{c'}:C') }\operatorname{sgn}(c)\\
&= \displaystyle \sum_{c\in \mathcal{O}_{L}(D_{c}:C') }\operatorname{sgn}(c)- \displaystyle \sum_{c\in \mathcal{U}_{L}(D_{c}:C') }\operatorname{sgn}(c)=\operatorname{Ind}^2(c).
\end{align*}
By a similar argument, we can show that
\[\operatorname{Ind}^2(c')=\operatorname{Ind}^1(c). \]
Hence, the desired result follows.

\end{proof}

\begin{lemma} \label{lem:cosmetic2}
Let $D'$  be the diagram obtained from twisted knot diagram $D$ by switching the crossing $c$, and let $c'$ denote the corresponding crossing in  $D'$.  For any crossing $c_k$ of $D$ with $c_k\neq c$, let $c'_k$ be the corresponding crossing in $D'$. Then 
\[\operatorname{Ind}^i(c'_k) = \operatorname{Ind}^i (c_k), \quad \text{for~} i=1,2.
\]
\end{lemma}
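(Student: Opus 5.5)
The plan is to reduce the statement to Lemma~\ref{prop: RIII}. Fix a crossing $c_k\neq c$ of $D$ and let $c'_k$ be the corresponding crossing of $D'$. I would compare the smoothed link diagrams $D_{c_k}$ and $D'_{c'_k}$. Smoothing at $c_k$ along the orientation is a local operation that does not involve $c$. It also does not involve the bars, nor the base points $\alpha,\beta$ placed near $c_k$. Hence $D'_{c'_k}$ is obtained from $D_{c_k}$ by the single crossing change at the crossing corresponding to $c$. Moreover, the underlying curves, the bars, and therefore the arcs and the consecutive sequences of arcs are identical in the two diagrams. The base points $\alpha$ and $\beta$ lie on the same arcs $C$ and $C'$ in both.

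Next I would check that the sequences $\langle C=C_1,\ldots,C_n\rangle$ and $\langle C'=C'_1,\ldots,C'_m\rangle$ used to define $\operatorname{Ind}^1(c'_k)$ and $\operatorname{Ind}^2(c'_k)$ coincide with those for $c_k$. This holds because a crossing change alters neither the orientation nor the bar positions. The only possible difference in the sums defining the indices comes from the crossing corresponding to $c$ in $D_{c_k}$, and I split into two cases.

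\emph{Case 1: it is a self crossing of $D_{c_k}$.} Then it contributes to neither $\mathcal{O}_L$ nor $\mathcal{U}_L$, all other data are unchanged, and the indices agree.

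\emph{Case 2: it is a linking crossing.} Then $D'_{c'_k}$ arises from $D_{c_k}$ by one crossing change at a linking crossing. Lemma~\ref{prop: RIII}, applied once with starting arc $C$ and once with starting arc $C'$, gives equality of the two signed sums. Concretely, the switched crossing moves from $\mathcal{O}_L$ to $\mathcal{U}_L$ or vice versa, and its sign flips, so its contribution to $\sum_{\mathcal{O}_L}\operatorname{sgn}-\sum_{\mathcal{U}_L}\operatorname{sgn}$ is unchanged. This yields $\operatorname{Ind}^i(c'_k)=\operatorname{Ind}^i(c_k)$ for $i=1,2$.

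The main point requiring care is the identification of base points and arc sequences. One must confirm that switching $c$ does not swap which strand at $c_k$ is over or under. It does not, since $c\neq c_k$. Hence the labels $\alpha$ (under) and $\beta$ (over) at $c_k$, and the components carrying them, are the same in $D_{c_k}$ and $D'_{c'_k}$. Once this is noted, everything follows from Lemma~\ref{prop: RIII}.
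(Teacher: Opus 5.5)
Your proposal is correct and follows the paper's own argument. You observe that $D'_{c'_k}$ is obtained from $D_{c_k}$ by a single crossing change at the crossing corresponding to $c$, with the same arcs and with the base points on the same components, and then invoke Lemma~\ref{prop: RIII}; your self/linking case split and the explicit check of the arc sequences just spell out details the paper's one-line proof leaves implicit (Lemma~\ref{prop: RIII} already covers switched self crossings).
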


\begin{proof} 
Let $D'$ be the diagram obtained from $D$ by applying a crossing change operation at $c$, and let $c'$ denote the corresponding crossing in $D'$.  Let $c'_k$ be a crossing in $D'$ corresponding to a crossing $c_k$ in $D$. If  $c_k\neq c$,  then the diagrams $D_{c_{k}}$ and $D'_{c'_{k}}$ differ by exactly one crossing change, while the base points remain on the same components. Therefore, by Lemma~\ref{prop: RIII}, we have
\[\operatorname{Ind}^1(c'_k) = \operatorname{Ind}^1 (c_k), \quad \text{and}\quad \operatorname{Ind}^2(c'_k) = \operatorname{Ind}^2 (c_k). \]
\end{proof}

\begin{theorem} \label{thm:cosmetic}
Let $D$ be a twisted knot diagram and let $c$ be a crossing in $D$. If $\operatorname{Ind}^1(c)+\operatorname{Ind}^2(c)\neq 0$  , then $c$ is not a cosmetic crossing. 
\end{theorem}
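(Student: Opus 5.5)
We argue by comparing $\mathcal{B}_D(t)$ with $\mathcal{B}_{D'}(t)$, where $D'$ is obtained from $D$ by switching $c$ and $c'$ is the crossing of $D'$ corresponding to $c$. By Theorem~\ref{thm-main}, $\mathcal{B}$ is an invariant of twisted knots. So if the crossing change at $c$ were cosmetic, i.e. $D'$ equivalent to $D$, we would have $\mathcal{B}_{D'}(t)=\mathcal{B}_D(t)$. The plan is to compute the difference $\mathcal{B}_{D'}(t)-\mathcal{B}_D(t)$ explicitly and show it is nonzero whenever $\operatorname{Ind}^1(c)+\operatorname{Ind}^2(c)\neq 0$.

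First, by Lemma~\ref{lem:cosmetic2}, every crossing $c_k\neq c$ keeps both index values, $\operatorname{Ind}^i(c'_k)=\operatorname{Ind}^i(c_k)$. Its sign is also unchanged, since only $c$ is switched. Hence all terms of $\mathcal{B}$ indexed by $c_k\neq c$ cancel in the difference.

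Second, by Lemma~\ref{lem:cosmetic} we have $\operatorname{Ind}^1(c')=\operatorname{Ind}^2(c)$ and $\operatorname{Ind}^2(c')=\operatorname{Ind}^1(c)$, so the exponent $\operatorname{Ind}^1+\operatorname{Ind}^2$ is the same at $c$ and $c'$. Call it $N$. Since $\operatorname{sgn}(c')=-\operatorname{sgn}(c)$, we get
\[
\mathcal{B}_{D'}(t)-\mathcal{B}_D(t)=-\operatorname{sgn}(c)(t^{N}-1)-\operatorname{sgn}(c)(t^{N}-1)=-2\operatorname{sgn}(c)(t^{N}-1).
\]

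Finally, this Laurent polynomial is nonzero exactly when $N\neq 0$. Under the hypothesis $N\neq0$ we get $\mathcal{B}_{D'}(t)\neq\mathcal{B}_D(t)$, so $D'$ is not equivalent to $D$ and $c$ is not cosmetic.

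The substantive input is entirely in Lemmas~\ref{lem:cosmetic} and~\ref{lem:cosmetic2}. The only point needing care is the sign bookkeeping: the two contributions at $c$ and $c'$ must add rather than cancel. They add because the exponent is symmetric under the swap of $\operatorname{Ind}^1$ and $\operatorname{Ind}^2$ while the sign flips.
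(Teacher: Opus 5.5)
Your proposal is correct and is essentially the paper's own proof. Lemma~\ref{lem:cosmetic2} cancels every term away from $c$, and Lemma~\ref{lem:cosmetic} together with $\operatorname{sgn}(c')=-\operatorname{sgn}(c)$ gives $\mathcal{B}_{D'}(t)-\mathcal{B}_D(t)=-2\operatorname{sgn}(c)\bigl(t^{\operatorname{Ind}^1(c)+\operatorname{Ind}^2(c)}-1\bigr)$, after which invariance (Theorem~\ref{thm-main}) finishes the argument; your explicit note that the other crossings keep their signs is a point the paper leaves implicit.
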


\begin{proof} 
Let $c$ be a classical crossing of a twisted knot diagram $D$ such that $$\operatorname{Ind}^1(c)+\operatorname{Ind}^2(c)\neq 0.$$ Let $D'$ be the diagram obtained from $D$ by performing a crossing change at $c$, and let $c'$ denote the corresponding crossing in $D'$.

 \noindent Then, by Lemma~\ref{lem:cosmetic2}, we have 
\begin{equation}\label{eq:cos}
\begin{split}
\mathcal{B}_{D'}(t) - \operatorname{sgn}(c') \left(t^{\operatorname{Ind}^1(c')+\operatorname{Ind}^2(c')}  -  1 \right) \qquad \qquad \qquad \\  \qquad \qquad \qquad
=  \mathcal{B}_{D}(t) - \operatorname{sgn}(c) \left(t^{\operatorname{Ind}^1(c)+\operatorname{Ind}^2(c)}  -  1 \right) . 
\end{split}
 \end{equation}
 
\noindent It is evident that $\operatorname{sgn} (c') = - \operatorname{sgn} (c)$. Moreover, it follows from Lemma~\ref{lem:cosmetic} that              
\[\operatorname{Ind}^1 (c') =  \operatorname{Ind}^2 (c), \quad \text{and} \quad \operatorname{Ind}^2 (c') =  \operatorname{Ind}^1 (c).\]  
Hence, Eq.~\ref{eq:cos} implies that
\begin{align*}
\mathcal{B}_{D'}(t) -  \mathcal{B}_{D}(t)& =-\operatorname{sgn}(c) \left(t^{\operatorname{Ind}^1(c)+\operatorname{Ind}^2(c)} - 1 \right)-\operatorname{sgn}(c)\left(t^{\operatorname{Ind}^2(c)+\operatorname{Ind}^1(c)}  -  1 \right) \\
&= - \operatorname{sgn}(c) \left(t^{\operatorname{Ind}^1(c)+\operatorname{Ind}^2(c)} + t^{\operatorname{Ind}^2(c)+\operatorname{Ind}^1(c)}  -  2 \right).
\end{align*}
Clearly, if $\operatorname{Ind}^1 (c)+\operatorname{Ind}^2 (c) \neq 0$, then $\mathcal{B}_{D'}(t) \neq  \mathcal{B}_{D}(t)$. 
Hence, $D'$ is not equivalent to $D$. Therefore, the crossing $c$ is not cosmetic.  
\end{proof} 

\section*{Acknowledgments} 
First author was supported by the National Board of Higher Mathematics (NBHM)(Ref. No. 0204/16(11)/2022/R $\&$D-II/11983), Government of India. Second author acknowledges the support given by Anusandhan National Research
Foundation (ANRF) with sanction order no. CRG/2023/004921. 
The third author acknowledges the support of the University Grants Commission
(UGC), India, for a research fellowship with NTA Ref. No.231610035955 .

\end{document}